\newtheorem{theo}{Theorem}[section]
\newtheorem{lemma}[theo]{Lemma}
\newtheorem{defi}[theo]{Definition}
\newtheorem{prop}[theo]{Proposition}
\newtheorem{cor}[theo]{Corollary}
\newtheorem{remark}[theo]{Remark}
\numberwithin{equation}{section}
\def\bR{{\mathbf R}}
\def\bL{{\mathbf L}}
\def\pre-tr{\operatorname{pre-tr}}
\def\Hom{\operatorname{Hom}}
\newcommand{\bbC}{{\mathbb C}}
\newcommand{\bbZ}{{\mathbb Z}}
\newcommand{\bbP}{{\mathbb P}}
\newcommand{\cO}{{\mathcal O}}
\newcommand{\cP}{{\mathcal P}}
\newcommand{\cA}{{\mathcal A}}
\newcommand{\cB}{{\mathcal B}}
\newcommand{\cE}{{\mathcal E}}
\newcommand{\Perf}{\operatorname{Perf}}
\newcommand{\Eu}{\operatorname{Eu}}
\newcommand{\Tr}{\operatorname{Tr}}
\newcommand{\even}{\operatorname{even}}
\newcommand{\odd}{\operatorname{odd}}
\newcommand{\id}{\operatorname{id}}
\newcommand{\op}{\operatorname{op}}
\newcommand{\pt}{\operatorname{pt}}
\newcommand{\sh}{\operatorname{sh}}
\newcommand{\Hg}{\operatorname{Hg}}
\title[Lefschetz fixed point theorems for Fourier-Mukai functors and DG algebras]
{Lefschetz fixed point theorems for Fourier-Mukai functors and DG algebras}
\author{Valery A.~Lunts}
\address{Department of Mathematics, Indiana University,
Bloomington, IN 47405, USA} \email{vlunts@indiana.edu}
\thanks{The author was partially supported by the NSF grant 48-294-16}
\begin{document}

\begin{abstract} We propose some variants of Lefschetz fixed point
theorem for Fourier-Mukai functors on a smooth projective algebraic
variety. Independently we also suggest a similar theorem for
endo-functors on the category of perfect modules over a smooth
and proper DG algebra.
\end{abstract}

\maketitle

\tableofcontents

\section{Introduction}

Leschetz fixed point theorem (LFP) is a principle that has many
incarnations. One of its simplest forms is the following: Let
$f:X\to X$ be a nice self-map of a nice space $X.$ Then the
number of fixed points of $f$ equals the supertrace
$$\sum _i(-1)^i\Tr H_i(f)$$
where $H_i(f):H_i(X)\to H_i(X)$ is the induced map on homology.
The "number if fixed points" should be properly defined as the
intersection $\Gamma (f)\cdot \Delta $ of the graph of $f$ with the
diagonal $\Delta \subset X\times X.$

In this paper we prove several variants of LFP theorem.

In the first part we work in algebraic-geometric setting.
Let $X$ be a smooth
projective variety over a field  $k.$ Denote by $D^b(X)$ the bounded derived
category of coherent sheaves on $X.$
Let $Y$ be another smooth projective  variety over $k$ and
$E\in D^b(X\times Y).$ Then there is the corresponding
Fourier-Mukai functor $\Phi _E:D^b(X)\to D^b(Y)$
$$\Phi _E(-)=\bR p_*(E\stackrel {\bL}{\otimes }q^*(-)),$$
where the maps $p,q$ are the projections
$$X\stackrel{q}{\leftarrow} X\times Y\stackrel{p}{\to} Y.$$

One has a finite dimensional
graded vector space $HH_\bullet (X)$ - the
Hochschild homology of $X.$
The functor $\Phi _E$ induces the linear map of graded spaces
$HH_\bullet (\Phi _E):HH_\bullet (X)\to HH_\bullet (Y).$
In particular if $X=Y$ we get endomorphisms
$HH_i(\Phi _E):HH_i(X)\to HH_i(X)$
for each $i\in \bbZ.$
It is natural to define for $E\in D^b(X\times X)$
the "number of fixed points of $\Phi _E$" to be the "intersection" of
$E$ with the diagonal $\Delta \subset X\times X,$ i.e. as
$\sum _i(-1)^i\dim HH_i(E)$
(see Definition \ref{general-hochschild-homology}).
The following Hochschild homology version
of LFP theorem (= Theorem \ref{LFP-Hochschild-homology-geometric-in-text})
is easy to prove.

\begin{theo} \label{LFP-geometric}
In the above notation there is the equality
$$\sum _i(-1)^i\dim HH_i(E)=\sum _j(-1)^j\Tr HH_j(\Phi _E).$$
\end{theo}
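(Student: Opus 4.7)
The plan is to interpret both sides of the equation as the Euler characteristic of a single derived Hom complex on $X\times X$---the ``categorical trace'' of the Fourier-Mukai kernel $E$.

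First I would invoke the standard description of Hochschild homology via kernels on $X\times X$: for a smooth projective variety, $HH_\bullet(X)$ is naturally identified with $\Ext^\bullet_{D^b(X\times X)}(\Delta_!\O_X,\Delta_*\O_X)$, where $\Delta_!\O_X$ denotes the shriek-pushforward of $\O_X$ (differing from $\Delta_*\O_X$ by a twist with the dualizing sheaf and a shift by $\dim X$). Consistent with Definition \ref{general-hochschild-homology}, one has more generally $HH_\bullet(E)=\Ext^\bullet_{D^b(X\times X)}(\Delta_!\O_X,E)$ for any kernel $E$, so the left-hand side of the theorem is $\chi\bigl(\bR\Hom(\Delta_!\O_X,E)\bigr)$. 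Moreover, since the identity functor on $D^b(X)$ is the Fourier-Mukai functor with kernel $\Delta_*\O_X$ and composition corresponds to convolution of kernels ($\Phi_F\circ\Phi_E=\Phi_{F\star E}$), the action of $HH_\bullet(\Phi_E)$ on $\Ext^\bullet(\Delta_!\O_X,\Delta_*\O_X)$ is given by post-composition with a canonical morphism encoding $E$.

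The decisive step is then a formal categorical trace identity: the supertrace of the ``post-compose with $E$'' operator on the finite-dimensional graded vector space $\Ext^\bullet(\Delta_!\O_X,\Delta_*\O_X)$ equals $\chi\bigl(\bR\Hom(\Delta_!\O_X,E)\bigr)$. This is the $\Hom$-side incarnation of the elementary identity $\Tr(f)=\langle\id,f\rangle$ under the perfect pairing provided by Serre duality on $X\times X$, which makes $\Delta_!\O_X$ and $\Delta_*\O_X$ mutually Serre-dual. I expect the main obstacle to be purely bookkeeping: tracking shifts, dualizing sheaves, and signs so that both Euler characteristics match exactly. Once the duality pairing is correctly normalized, the equality is essentially formal, which is presumably why the author calls this version of LFP easy to prove.
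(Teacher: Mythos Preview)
Your identification of $HH_\bullet(E)$ with $\Ext$ groups from $\Delta_!\O_X:=\Delta_*S_X^{-1}$ to $E$ is correct, and the instinct to read the theorem as a categorical trace identity is sound. The real gap is your description of how $\Phi_E$ acts on $HH_\bullet(X)=\Ext^{-\bullet}(\Delta_!\O_X,\Delta_*\O_X)$: it is \emph{not} post-composition with any canonical morphism. A general kernel $E$ determines no map $\Delta_*\O_X\to\Delta_*\O_X$ (such a thing would be a Hochschild \emph{cohomology} class) and no canonical map $\Delta_*\O_X\to E$. In C\u{a}ld\u{a}raru's construction the action sends $\mu\in\Hom(\Delta_!\O_X,\Delta_*\O_X)$ to a composite of the shape
\[
\Delta_!\O_X\ \longrightarrow\ E^{\vee}*\Delta_!\O_X*E\ \xrightarrow{\ \id*\mu*\id\ }\ E^{\vee}*\Delta_*\O_X*E\ \longrightarrow\ \Delta_*\O_X,
\]
built from the unit and counit of the adjunction between $\Phi_E$ and $\Phi_{E^{\vee}}$. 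This is a conjugation, not a one-sided composition, so your ``elementary identity $\Tr(f)=\langle\id,f\rangle$'' has no single $f$ to apply to; the step you call formal is precisely the content of the theorem. (A smaller slip: $\Delta_!\O_X$ and $\Delta_*\O_X$ are not mutually Serre-dual on $X\times X$ unless $X$ is Calabi--Yau, since $S_{X\times X}(\Delta_!\O_X)\cong\Delta_*\omega_X[n]$.) There \emph{is} a rigorous bicategorical-trace argument along the lines you gesture at, but it requires setting up duality data and proving cyclicity of the trace, none of which is the ``pure bookkeeping'' you anticipate.

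The paper's route is different and sidesteps this difficulty. It defines a pairing $\langle\,,\,\rangle_X$ on $HH(X)$ and an Euler class $Eu'(E)\in HH(X)\otimes HH(X)$ via the K\"unneth isomorphism, and proves two lemmas: $HH(\Phi_E)$ is convolution against $Eu'(E)$ with respect to $\langle\,,\,\rangle_X$ (Lemma~\ref{main-geometric}), and $\sum_i(-1)^i\dim HH_i(E)$ equals the pairing contracted on the two legs of $Eu'(E)$ (Lemma~\ref{computation}). Expanding $Eu'(E)=\sum_{m,n}\alpha_{mn}\,\bar v_m\otimes v_n$ in dual bases, both sides compute to $\sum_m\alpha_{mm}$. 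The K\"unneth/Euler-class description turns $HH(\Phi_E)$ into an explicit matrix whose supertrace one simply reads off---something your kernel-side picture never reaches.
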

The proof of this theorem has a tautological flavor
once basic functorial properties of $HH(X)$
have been established. Here the main references are \cite{Cal1},\cite{Ram},
\cite{MaSte}.

It is as easy to prove the Hirzebruch-Riemann-Roch theorem for Hochschild homology
(Proposition \ref{HRR-HH-geometric}).

Assume now that $k=\bbC.$ Then one can consider the singular cohomology
$H^\bullet (X,\bbC).$  Again an object $E\in D^b(X\times Y)$
induces the linear map $H^\bullet (\Phi _E):H^\bullet (X,\bbC)\to
H^\bullet (Y,\bbC)$ which is the convolution with the cohomology class
$ch(E)\cup \sqrt {td_{X\times Y}}\in H^\bullet (X\times Y,\bbC)$
(here $ch(E)$ is the
Chern character of $E$ and $td_{X\times Y}$ is the Todd class of
$X\times Y$). This map preserves the parity of the
degree of cohomology, hence it is the sum of two linear operators
$H^{\text{ev}} (\Phi _E)\oplus H^{\text{odd}} (\Phi _E).$
Next is the singular cohomology version of LFP theorem
(= Theorem \ref{LFP-singular-cohomology-in-text}).

\begin{theo} \label{LFP-singular-cohomology}
Let $X$ be a complex smooth projective variety and
let $E\in D^b(X\times X).$ Then there is the equality
$$\sum _i(-1)^i\dim HH_i(E)=\Tr H^{\text{ev}}(\Phi _E)-
\Tr H^{\text{odd}}(\Phi _E).$$
\end{theo}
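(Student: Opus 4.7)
The plan is to deduce this statement from the Hochschild version (Theorem \ref{LFP-geometric}) by identifying, on a complex smooth projective $X$, the Hochschild homology $HH_\bullet(X)$ with the singular cohomology $H^\bullet(X,\bbC)$ in a way compatible with the action of Fourier-Mukai functors and with parity of degree.

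First I would invoke the Hochschild-Kostant-Rosenberg isomorphism, which gives a canonical decomposition
\[
HH_n(X) \;\cong\; \bigoplus_{q-p=n} H^q(X,\Omega^p_X).
\]
Combined with the Hodge decomposition $H^k(X,\bbC)=\bigoplus_{p+q=k}H^q(X,\Omega^p_X)$, which holds since $X$ is complex projective hence Kähler, this gives a (non-canonical) linear isomorphism $HH_\bullet(X)\cong H^\bullet(X,\bbC)$. The crucial parity observation is that on the summand $H^q(X,\Omega^p_X)$ one has $(-1)^{q-p}=(-1)^{p+q}$, so the Hochschild degree $i$ matches the total cohomological degree modulo $2$. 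In particular $HH_{\text{ev}}(X)\cong H^{\text{ev}}(X,\bbC)$ and $HH_{\text{odd}}(X)\cong H^{\text{odd}}(X,\bbC)$, and the supertraces on the two sides are equal once we know the induced maps are identified.

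The second step is to check that, under the HKR/Hodge identification, the map $HH_\bullet(\Phi_E)$ goes over to $H^\bullet(\Phi_E)$, i.e.\ convolution with $\mathrm{ch}(E)\cup\sqrt{td_{X\times X}}$. This is the content of C\u ald\u araru's Mukai-vector description of $HH_\bullet(\Phi_E)$ (reference \cite{Cal1}): the Chern character of the kernel is the image under HKR of the canonical Hochschild class of $E$, and the square root of the Todd class appears from comparing the HKR isomorphisms on $X\times X$ with the external product of those on the factors (the Caldararu--Mukai ``twisting''). Granted this identification, the supertrace of $HH_\bullet(\Phi_E)$ with respect to the $\bbZ$-grading equals the supertrace of $H^\bullet(\Phi_E)$ with respect to the parity grading, i.e.\ $\Tr H^{\text{ev}}(\Phi_E)-\Tr H^{\text{odd}}(\Phi_E)$.

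Combining these two steps with Theorem \ref{LFP-geometric} yields the desired equality. The main obstacle is the second step: verifying that HKR, together with the Todd-square-root twist, intertwines the Hochschild-homology action of $\Phi_E$ with the singular-cohomology action by $\mathrm{ch}(E)\cup\sqrt{td_{X\times X}}$. All other ingredients (HKR, Hodge decomposition, parity bookkeeping, and Theorem \ref{LFP-geometric}) are essentially formal once this compatibility is in hand.
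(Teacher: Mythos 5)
Your proposal is correct and follows essentially the same route as the paper: HKR composed with the $\cup\sqrt{td_X}$ twist, the parity observation $(-1)^{p-q}=(-1)^{p+q}$, the compatibility of this identification with the action of Fourier--Mukai kernels, and then Theorem \ref{LFP-geometric}. The one point worth noting is that the compatibility you correctly single out as the crux is exactly Theorem 1.2 of \cite{MaSte} (conjectured by C\u ald\u araru, proved by Macr\`i--Stellari building on \cite{Cal1}, \cite{Cal2} and \cite{Ram}), which is the reference the paper invokes at that step.
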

This theorem follows from Theorem \ref{LFP-geometric} above
and Theorem 1.2 in
\cite{MaSte} (which in turn is heavily based on \cite{Cal1},\cite{Cal2}
and \cite{Ram}).

Denote by $\Delta :X\to X\times X$ the diagonal embedding. It follows from
the Grothendieck-Riemann-Roch theorem that
$$\sum (-1)^i\dim HH_i(E)
=\int _{X\times X}chE\cup \Delta _*(td _X)=
\int _X\Delta ^*(chE) \cup td _X$$
(Remark \ref{formula-for-dot-by-class}). So the left hand side in
Theorems \ref{LFP-geometric},\ref{LFP-singular-cohomology},\ref{LFP-singular-homology}
can be computed using the Chern character of
$E.$

Consider now the singular homology
$H_\bullet (X)=H_\bullet (X,\bbC ).$
Let $f:X\to X$ be a morphism.
For each $j$ we get the corresponding linear map $H_j(f):H_j(X)\to H_j(X).$
Again it is natural to define the "number of fixed points of $f$"
as the alternating sum
$\sum _i(-1)^iHH_i(\cO _{\Gamma (f)}),$
where $\cO _{\Gamma (f)}\in D^b(X\times X)$ is the structure sheaf of the
graph $\Gamma (f)$ of the morphism $f.$

\begin{theo} \label{LFP-singular-homology}
Let $X$ be a smooth complex projective variety and let
$f:X\to X$ be a morphism. Then in the above notation there is the equality
$$\sum _i(-1)^i\dim HH_i(\cO _{\Gamma (f)})=\sum _j(-1)^j\Tr H_j(f).$$
\end{theo}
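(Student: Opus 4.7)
The plan is to reduce the statement to the classical topological Lefschetz fixed point formula
$$[\Gamma(f)]\cdot[\Delta]=\sum_j(-1)^j\Tr H_j(f),$$
which identifies the topological self-intersection of the graph with the diagonal in $X\times X$ as the homological Lefschetz number of $f$. Granted this, the theorem reduces to the purely algebro-geometric identity
$$\sum_i(-1)^i\dim HH_i(\cO_{\Gamma(f)})=[\Gamma(f)]\cdot[\Delta]=\int_{X\times X}[\Gamma(f)]\cup[\Delta],$$
where $n=\dim_{\bbC}X$ and the Poincar\'e duals live in $H^{2n}(X\times X,\bbC)$.

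For this identity, I would start from the Chern-character formula of Remark \ref{formula-for-dot-by-class} applied to $E=\cO_{\Gamma(f)}$:
$$\sum_i(-1)^i\dim HH_i(\cO_{\Gamma(f)})=\int_{X\times X}ch(\cO_{\Gamma(f)})\cup\Delta_*(td_X).$$
Grothendieck-Riemann-Roch applied to the codimension-$n$ smooth embeddings $\iota\colon X\hookrightarrow X\times X$, $x\mapsto(x,f(x))$, and $\Delta\colon X\hookrightarrow X\times X$ yields
$$ch(\cO_{\Gamma(f)})=[\Gamma(f)]+(\text{terms in }H^{>2n})\quad\text{and}\quad\Delta_*(td_X)=[\Delta]+(\text{terms in }H^{>2n}),$$
both leading terms sitting in $H^{2n}(X\times X,\bbC)$. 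Since the integration pairing only sees $H^{4n}$ and both factors already start in degree $2n$, any cross-contribution involving a higher-degree correction lands in $H^{>4n}=0$. Hence the integral collapses to $\int_{X\times X}[\Gamma(f)]\cup[\Delta]=[\Gamma(f)]\cdot[\Delta]$, and classical Lefschetz closes the argument.

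I do not anticipate a serious obstacle: the core is a degree count combined with GRR for smooth embeddings, and the classical topological Lefschetz formula is standard for compact oriented manifolds. An alternative route is to apply Theorem \ref{LFP-singular-cohomology} directly to $E=\cO_{\Gamma(f)}$: a GRR plus projection-formula computation identifies $H^\bullet(\Phi_{\cO_{\Gamma(f)}})$ with the twisted Gysin pushforward $\alpha\mapsto f_*(\sqrt{td_X/f^*td_X}\cup\alpha)$. Since the twisting factor has constant term $1$ and $f_*$ preserves cohomological degree, only this constant contributes to the diagonal entries of each $H^k(\Phi_{\cO_{\Gamma(f)}})$; the supertrace therefore equals that of $f_*$ on $H^\bullet(X,\bbC)$, which Poincar\'e duality converts into $\sum_j(-1)^j\Tr H_j(f)$.
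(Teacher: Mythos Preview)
Your proposal is correct, and in fact sketches two valid proofs. The alternative route you mention at the end is essentially the paper's argument: it applies Theorem \ref{LFP-singular-cohomology} to $E=\cO_{\Gamma(f)}$, computes $H^\bullet(\Phi_{\cO_{\Gamma(f)}})$ via GRR (the paper's Lemma \ref{maps-on-cohomology} gives the precise formula $H^\bullet(\Phi_{\cO_{\Gamma(f)}})(-)=(\sqrt{td_X})^{-1}\cup f_*(\sqrt{td_X}\cup -)$, slightly different from but of the same shape as yours), and then observes that conjugation by the parity-preserving isomorphism $\cup\sqrt{td_X}$ leaves the supertrace unchanged, so it agrees with that of $f_*$; Poincar\'e duality then converts this into the homological Lefschetz number.

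Your main approach, however, takes a genuinely different route and is worth contrasting. The paper's path goes through Theorem \ref{LFP-singular-cohomology}, which in turn relies on the comparison isomorphism $I^X$ of \cite{MaSte} (built on \cite{Cal1}, \cite{Cal2}, \cite{Ram}) --- a substantial input. Your degree-count argument bypasses all of that: starting from the GRR formula of Remark \ref{formula-for-dot-by-class}, the observation that both $ch(\cO_{\Gamma(f)})$ and $\Delta_*(td_X)$ live in cohomological degrees $\geq 2n$ immediately collapses the integral to the topological intersection number $[\Gamma(f)]\cdot[\Delta]$, and then the classical Lefschetz formula for compact oriented manifolds finishes. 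This is more elementary on the algebraic side, at the cost of importing classical Lefschetz as an external black box --- precisely the trade-off the paper's introduction wishes to avoid, since the stated aim there is to \emph{derive} the classical statement from the Hochschild version (Theorem \ref{LFP-geometric}) rather than to assume it.
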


This theorem (= Theorem \ref{LFP-singular-homology-in-text})
is a consequence of the special case of Theorem
\ref{LFP-singular-cohomology} (when $E=\cO _{\Gamma (f)}$) and
the Poincare duality between the singular homology and cohomology of $X.$
Theorem \ref{LFP-singular-homology} is not new: a similar formula
can be proved for any Weil (co)homology theory (of which singular cohomology
is an example) (see for instance \cite{Mus}). Nevertheless we consider it natural
to derive Theorem \ref{LFP-singular-homology} from Theorem \ref{LFP-geometric},
since in that last theorem both sides of the equality have the same nature -
Hochschild homology.

Finally in Section \ref{two-maps} we prove yet another version of LFP
theorem for two maps between different spaces (Theorem
\ref{lefschetz-theorem-for-two-maps}).

In the second part of the paper we want to consider a LFP theorem
of categorical nature:
the space $X$ is a triangulated category $T$ and the map $f$ is an
endofunctor $F:T\to T.$ More precisely, let $A$ be a smooth and
proper  DG algebra
(over a fixed field $k$). A perfect DG bimodule $M\in \Perf
(A^{\op}\otimes A)$ defines the endofunctor $$\Phi
_M=-\stackrel{\bL}{\otimes }_AM:\Perf A\to \Perf A$$
where $\Perf A$ is the triangulated category of perfect DG $A$-modules.
It is natural
to define the "number of fixed points of $\Phi _M$" as the
alternating sum
$\sum _i (-1)^i\dim HH_i(M),$
where $HH_i(M)$ is the i-th Hochschild homology space of the
DG bimodule $M.$

The functor $\Phi _M$ defines the endomorphism $HH_j (\Phi _M)$ of
the Hochschild homology $HH_j (A)$ for each $j\in
\bbZ.$ We prove the following LFP theorem (= Theorem \ref{Lef}) for
$\Phi _M$ (our assumption on $A$ guarantees that all spaces involved
have finite dimension).

\begin{theo} \label{LFP-algebraic}
Let $A$ be a smooth and proper DG algebra over a field
$k$ and let $M\in \Perf A^{\op}\otimes A$ be a perfect DG bimodule.
Then there is an equality of the two elements of $k$
$$\sum _i (-1)^i\dim HH_i(M)=\sum _j (-1)^j\Tr HH_j (\Phi
_M).$$
\end{theo}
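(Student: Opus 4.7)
The plan is to mimic the tautological proof of Theorem~\ref{LFP-geometric}: extend the relevant functorial properties of Hochschild homology to the DG setting, and then reduce via devissage to a single base case. The first observation is that both sides of the asserted identity are additive on exact triangles in $M\in\Perf(A^{\op}\otimes A)$. For the left-hand side, the functor $N\mapsto HH_\bullet(N):=A\otimes_{A^{\op}\otimes A}^{\bL} N$ is exact and lands in perfect complexes of $k$-modules (smoothness of $A$ gives $A\in\Perf(A^{\op}\otimes A)$, and properness forces finite total dimension), so Euler characteristic is additive. For the right-hand side, an exact triangle $M_1\to M_2\to M_3$ induces a triangle of endofunctors $\Phi_{M_i}$ of $\Perf A$, hence a relation among the induced endomorphisms of the fixed graded space $HH_\bullet(A)$; additivity of the supertrace on such triangles yields the conclusion. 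Since $\Perf(A^{\op}\otimes A)$ is thickly generated by the rank-one free bimodule $A^{\op}\otimes A$, it suffices to check the identity for this single choice of $M$.

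For $M=A^{\op}\otimes A$ the left-hand side is immediate:
$$HH_\bullet(A^{\op}\otimes A)\;=\;A\otimes_{A^{\op}\otimes A}^{\bL}(A^{\op}\otimes A)\;=\;A,$$
so it equals $\chi(A):=\sum_i(-1)^i\dim H^i(A)$. For the right-hand side, one observes that $\Phi_{A^{\op}\otimes A}$ factors as $G\circ F$, where $F\colon\Perf A\to\Perf k$ is the forgetful functor (well-defined by properness) and $G\colon\Perf k\to\Perf A$ is $V\mapsto V\otimes_k A$. By functoriality of Hochschild homology, $HH_\bullet(\Phi_{A^{\op}\otimes A})=HH_\bullet(G)\circ HH_\bullet(F)$ is a rank-one operator factoring through $HH_\bullet(k)=k$; its supertrace equals the scalar $HH_\bullet(F)(HH_\bullet(G)(1))$. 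The image $HH_\bullet(G)(1)\in HH_0(A)$ is identified with the Chern character of $A\in\Perf A$, while $HH_\bullet(F)$ is the trace map sending $ch(X)$ to $\chi(X)$; the scalar is therefore $\chi(A)$, matching the left-hand side.

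The main obstacle is the technical setup of the DG analogue of the functoriality package used in the geometric argument (cf.~\cite{Cal1,Ram,MaSte}): namely, realizing $HH_\bullet$ as a covariant invariant of Fourier--Mukai-type functors $\Phi_M$ that respects composition, together with the Chern character and trace identifications needed in the base case. Once these ingredients are in place, the devissage above becomes tautological, paralleling Theorem~\ref{LFP-geometric}.
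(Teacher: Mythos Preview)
Your d\'evissage strategy is genuinely different from the paper's argument, but it hides the same technical core in the step you pass over most quickly. The paper does \emph{not} reduce to a generator; it proves directly (via the key Lemma~\ref{main-lemma}) that for any $M$ the map $HH(\Phi_M)$ is convolution with the class $Eu'(M)\in HH(A^{\op})\otimes HH(A)$, then picks dual bases with respect to the nondegenerate pairing $\langle\,,\,\rangle_A$ and reads off both sides as the same diagonal sum $\sum_m\alpha_{mm}$. Your reference to ``mimicking'' Theorem~\ref{LFP-geometric} is also off: that proof is of exactly the same shape (pairing plus Euler class, no d\'evissage).

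The substantive gap is the right-hand side additivity. A triangle $M_1\to M_2\to M_3$ does give a ``triangle of endofunctors'' in the sense that $\Phi_{M_2}\simeq\mathrm{Cone}(\Phi_{M_3}[-1]\to\Phi_{M_1})$, but a natural transformation between DG functors does not by itself yield a relation among the induced maps on $HH_\bullet(A)$: the chain map $C(\Phi_{M_2})$ sends a Hochschild chain to a single chain over the objects $\Phi_{M_2}(X_i)$, not to a sum of two chains, and splitting it requires work. What you actually need is $HH(\Phi_{M_2})=HH(\Phi_{M_1})+HH(\Phi_{M_3})$ as linear endomorphisms of $HH(A)$, i.e.\ that $M\mapsto HH(\Phi_M)$ factors through $K_0(\Perf(A^{\op}\otimes A))$. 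The cleanest proof of this is precisely Lemma~\ref{main-lemma}: once $HH(\Phi_M)$ is identified with convolution by $Eu'(M)$, additivity follows from additivity of the Euler class on triangles. So your ``shortcut'' does not avoid the key lemma; it reinserts it at the additivity step. (A direct chain-level argument for additivity of $HH(F)$ on cones of DG functors is possible but is not a one-liner and is not part of the functoriality-plus-composition package you list at the end.)

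Your base case is fine: for $M=A^{\op}\otimes A$ the functor $\Phi_M$ factors through $\Perf k$, both sides equal $\chi(A)$, and your trace/Chern-character identifications are exactly Properties~7 and Corollary~\ref{Eu-k-mod}. If you want to keep the d\'evissage architecture, you should state and prove the additivity of $M\mapsto HH(\Phi_M)$ explicitly---most naturally by invoking Lemma~\ref{main-lemma}---rather than gesture at ``additivity of the supertrace on such triangles''.
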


Actually a proof of this theorem (but not the statement) is
essentially contained in a beautiful preprint \cite{Shk} of D.
Shklyarov, where the Hirzebruch-Riemann-Roch (HRR) theorem for DG
algebras is discussed. It turns out that the proofs of theorems HRR and LFP
have much in common. Since
the paper \cite{Shk} seems to be unfortunately unpublished we
thought it worthwhile to give a simultaneous presentation of
theorems HRR and LFP. Thus most of what is contained in Part II is
from \cite{Shk}.

This paper has two parts.
It was our initial plan to deduce Theorem \ref{LFP-geometric} from
Theorem \ref{LFP-algebraic} using the description of the derived category
$D^b(X)$ as the category $\Perf A$ of perfect modules over a smooth and
proper DG algebra $A.$ But then we found a short self-contained proof of
Theorem \ref{LFP-geometric}, so the two parts of this paper are
completely independent (but parallel).

It is our pleasure to thank Mike Mandell for teaching us some algebraic
topology exercises. Laurentiu Maxim suggested to us Theorem
\ref{lefschetz-theorem-for-two-maps} as a generalization of Theorem
\ref{LFP-singular-homology}. Damien Calaque and Christopher Deninger
 asked the right questions and provided useful comments on the first
 version of this paper. We also thank Mircea Mustata, William Fulton
 and Ajay Ramadoss for useful discussions of the subject.

\part{Lefschetz fixed point theorem for Fourier-Mukai functors}

\section{Fourier-Mukai functors}
We fix a field $k.$ All our varieties will be $k$-varieties.

If $Z$ is a smooth projective variety we denote by
$D^b(Z)=D^b(cohZ)$ the bounded derived category of coherent sheaves
on $Z.$

Let $X$ and $Y$ be smooth and projective varieties over $k.$ An object
$E\in D^b(X\times Y)$ defines the corresponding {\it Fourier-Mukai}
functor $\Phi _E:D^b(X)\to D^b(Y)$ by the formula
$$\Phi _E(-)=\bR p_*(E\stackrel{\bL}{\otimes }q^*(-)),$$
where $p$ and $q$ are the projections
$$X\stackrel{q}{\leftarrow} X\times Y\stackrel{p}{\to} Y$$

Denote by $\Delta :X\to X\times X$ the diagonal morphism.
The object $\Delta _*\cO _X\in D^b(X\times X)$ induces the identity
functor $\id =\Phi _{\Delta _*\cO _X}:D^b(X)\to D^b(X).$

Given another smooth projective variety $Z$ and
$E^\prime \in D^b(Y\times Z)$ the composition of functors $\Phi _{E^\prime}
\cdot \Phi _{E}$ is isomorphic to the functor $\Phi _{E^\prime *E},$
where $E^\prime *E\in D^b(X\times Z)$ is the usual convolution of $E^\prime$
and $E$ \cite{Mu}.

The functor $\Phi _E$ induces the linear map $H(\Phi _E)$
between $H(X)$ and $H(Y),$ where $H(-)$ denotes the Hochschild
homology or the singular cohomology (if $k=\bbC$).
We are going to prove LFP type theorems for
these operators $H(\Phi _E).$ Later in Section
\ref{section-singular-homology}
we prove an analogous theorem for singular homology.

\section{Hochschild homology of smooth projective varieties}\label{Hochschild-homology-section}

Let $X$ be a smooth projective variety of dimension $n.$
We recall one of the many possible (equivalent)
definitions of the Hochschild homology of $X$ following
\cite{Cal1}. Namely let $S^{-1}_X=\omega ^*_X[-n]\in D^b(X)$
denote the shift of the dual of the canonical line bundle of $X.$
Consider the diagonal embedding $\Delta :X\to X\times X.$
Then one defines
$$HH_i(X)=
\Hom ^{-i}_{D^b(X\times X)}(\Delta _* S^{-1}_X,\Delta _*\cO _X).$$
We put
$$HH(X)=HH_\bullet (X)=\bigoplus _iHH_i(X).$$
Alternatively
$$HH_i(X)=\Hom ^{-i}_{D^b(X)}(\cO _X,\bL \Delta ^*\Delta _*\cO _X).$$

Actually we will never need to use the definition of $HH (X)$
but rather some of its properties which we now summarize.

\medskip

\noindent{\bf Properties of $HH (X).$}

\medskip

\noindent{1.} $\dim HH(X)<\infty $ and $HH(\pt)=
HH_0(\pt)=k.$

\noindent{2.} An object $E\in D^b(X\times Y)$ defines a degree
preserving linear map $HH (\Phi _E):HH (X)
\to HH (Y).$ In particular if $f:X\to Y$ is a morphism, then
the structure sheaf of its graph $\cO _{\Gamma (f)}$ considered as
an object of $D^b(X\times Y)$ or $D^b(Y\times X)$ defines the
corresponding linear maps which we denote
$$f_*:HH (X)\to HH (Y),\quad f^*:HH (Y)
\to HH(X).$$

The linear map $HH (\Phi _{\Delta _*\cO _X})$
defined by the object $\Delta _*\cO _X\in D^b(X\times X)$
is the identity.

\noindent{3.} The correspondence $E \mapsto HH (\Phi _E)$ is
functorial: Given $E^\prime \in D^b(Y\times Z)$ the convolution
$E^\prime * E \in D^b(X\times Z)$ defines the map
$HH (\Phi _{E^\prime *E})$ which is the composition
$HH (\Phi _{E^\prime })\cdot HH (\Phi _E).$

\noindent{4.} There exists the canonical {\it Kunneth isomorphism}
$$K:HH(X)\otimes HH(Y)
\longrightarrow HH(X\times Y)$$

\noindent{5.} If $\sigma :X\times X \to X\times X$ denotes the
transposition then the induced  map
$$K^{-1}\cdot \sigma _* \cdot K:HH (X)\otimes HH (X)
\to HH (X)\otimes HH (X)$$
is $a\otimes b\mapsto (-1)^{\deg (a)\deg (b)}b\otimes a.$

\noindent{6.} Let $E\in D^b(X\times Y)$ and $E^\prime \in D^b(Z\times W).$
Then the following diagram commutes
$$\begin{array}{rclcl}
HH (X) & \otimes & HH(Z) & \stackrel{K}{\longrightarrow}
& HH(X\times Z) \\
HH(\Phi _E )\downarrow & & \downarrow HH (\Phi _{E^\prime})   & &
\downarrow HH (
\Phi _{E\boxtimes E^\prime}) \\
HH(Y) & \otimes & HH (W) & \stackrel{K}{\longrightarrow}
& HH(Y\times W)
\end{array}
$$

\noindent{7. {\it The Euler class.}} Consider an object $N\in D^b(X)$ as
an object in $D^b(\pt \times X).$ Define the  Euler class of $N$
as $Eu (N)=HH (\Phi _N)(1)\in HH_0(X).$
The map $Eu$ descends to a group homomorphism
$$Eu :K_0(D^b(X))\to HH_0(X).$$
Given $E \in
D^b(X\times Y)$
the following diagram commutes
$$\begin{array}{ccc}
D^b(X) & \stackrel{\Phi _E}{\longrightarrow} & D^b(Y) \\
\downarrow Eu & & \downarrow Eu \\
HH(X) & \stackrel{HH(\Phi _E)}{\longrightarrow} & HH(Y)
\end{array}
$$
If $X=\pt$ and hence $N$ is just a complex of vector spaces then
$$Eu(N)=\sum _i(-1)^i\dim H^i(N)\in HH (pt)=k.$$

Property 1 is clear; 2,3,7 are from \cite{Cal1} (the Euler class is called
the Chern character in \cite{Cal1}) and 4,6
are from \cite{MaSte}. The property 5 follows from the usual
supercommutativity of the tensor product of complexes.

\medskip

\noindent{\it Pairing on $HH.$} We want to consider the
following pairing on $HH(X).$

\begin{defi} \label{natural-paring-geometric}
Consider the diagram of morphisms
$$X\times X \stackrel{\Delta}{\leftarrow }X \stackrel {p}{\to} \pt$$
and define the map $\langle ,\rangle _X:HH (X)\otimes
HH (X) \to k$ as the composition
$$HH (X)\otimes HH (X)\stackrel{K}{\to}
HH (X\times X)\stackrel {HH(\Delta ^*)}{\to} HH (X)
\stackrel{HH(p_*)}{\to }HH (\pt)=k.$$
\end{defi}

\begin{remark} It follows from Property 5 above that
$\langle a,b\rangle _X=(-1)^{\deg (a)\deg (b)}\langle b,a\rangle_X.$
\end{remark}

\begin{remark} Apparently this pairing is not the same as the Mukai pairing
considered by Caldararu \cite{Cal1}, although the two are closely
related (see \cite{Ram2}).
Our pairing is a direct analogue of the pairing \ref{def-pairing}
below and the
next lemma (and its proof) is similar to  Lemma \ref{main-lemma}
\end{remark}

\medskip

\noindent{\it Notation.} Given smooth projective varieties $X,Y,Z,W$
and objects $E\in D^b(X\times Y), E^\prime \in D^b(Z\times W)$ we denote
the functor $\Phi _{E \boxtimes E^\prime}:D^b(X\times Z)\to D^b(Y\times W)$
by $\Phi _E\boxtimes \Phi _{E^\prime}.$

\medskip

Let $X$ and $Y$ be smooth projective varieties and let
$E\in D ^b(X\otimes Y).$ Define
$$Eu (E)^\prime \in HH (X)
\otimes HH(Y)$$ to be the inverse image of $Eu(E)$
under the Kunneth isomorphism.

\begin{lemma} \label{main-geometric}
In the above notation the linear map $HH (\Phi _E):
HH(X)\to HH(Y)$ is the convolution with the class
$Eu ^\prime (E).$ I.e. it is equal to the composition
$$HH (X)\stackrel{\id \otimes Eu ^\prime (E)}
{\longrightarrow} HH (X)\otimes HH(X) \otimes
HH(Y)\stackrel{\langle ,\rangle _X \otimes \id}{\longrightarrow}
HH(Y).$$
\end{lemma}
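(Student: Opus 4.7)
The plan is to factor the Fourier--Mukai functor $\Phi_E$ into simpler Fourier--Mukai pieces and compute on both sides using the listed properties of Hochschild homology.

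First I would observe the factorisation
$$\Phi_E \;=\; \bR p_* \circ (\Delta_X\times \id_Y)^* \circ (-\boxtimes E),$$
where $(-\boxtimes E):D^b(X)\to D^b(X\times X\times Y)$ is the external product $N\mapsto N\boxtimes E$, the map $\Delta_X\times \id_Y:X\times Y\to X\times X\times Y$ is the diagonal on the first two factors, and $p:X\times Y\to Y$ is the projection. Indeed $(\Delta_X\times \id_Y)^*(N\boxtimes E)=q^*N\otimes^\bL E$, whose pushforward along $p$ is exactly $\Phi_E(N)$. Each of the three factors is a Fourier--Mukai functor, so by Property 3 (functoriality under convolution of kernels)
$$HH(\Phi_E)=HH(p_*)\circ HH((\Delta_X\times \id_Y)^*)\circ HH(-\boxtimes E).$$

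Next I would identify $HH(-\boxtimes E)$ explicitly. View $E\in D^b(\pt\times(X\times Y))$ as the kernel of the Fourier--Mukai functor $\Phi_E^{\pt}:D^b(\pt)\to D^b(X\times Y)$, and $\Delta_*\O_X$ as the kernel of $\id_X$. Their external tensor $\Delta_*\O_X\boxtimes E$ is the kernel of $\id_X\boxtimes \Phi_E^{\pt}$, which as a functor $D^b(X)=D^b(X\times\pt)\to D^b(X\times X\times Y)$ is precisely $N\mapsto N\boxtimes E$. By Property 7, $HH(\Phi_E^{\pt})(1)=Eu(E)$, so Property 6 applied to $a\otimes 1\in HH(X)\otimes HH(\pt)$ yields
$$HH(-\boxtimes E)(a)\;=\;K\bigl(a\otimes Eu(E)\bigr)\in HH(X\times X\times Y).$$

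It then remains to unwind the right-hand side of the lemma. Write $Eu'(E)=\sum b_i\otimes c_i$, so the convolution of $a$ with $Eu'(E)$ is $\sum\langle a,b_i\rangle_X\, c_i$. By associativity of the Künneth isomorphism,
$$K(a\otimes Eu(E))\;=\;\sum_i K\bigl(K(a\otimes b_i)\otimes c_i\bigr).$$
Applying Property 6 successively to the pairs of functors $(\Delta_X^*,\id_Y)$ and $(p^X_*,\id_Y)$ (with $p^X:X\to\pt$) lets me transport $HH((\Delta_X\times\id_Y)^*)$ and $HH(p_*)$ through the Künneth onto the $HH(X)$-tensorand, collapsing it to $\langle a,b_i\rangle_X\in k$. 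The result is
$$HH(p_*)\,HH((\Delta_X\times\id_Y)^*)\,K\bigl(a\otimes Eu(E)\bigr)\;=\;\sum_i\langle a,b_i\rangle_X\, c_i,$$
which matches the formula for $HH(\Phi_E)(a)$ from the previous step, concluding the argument.

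The main obstacle, as I see it, is the careful identification of $(-\boxtimes E)$ with the Fourier--Mukai functor whose kernel is $\Delta_*\O_X\boxtimes E$; everything afterwards is bookkeeping with the compatibilities (2)--(7), provided Property 6 is invoked in exactly the right form.
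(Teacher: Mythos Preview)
Your proof is correct and follows essentially the same strategy as the paper: both factor $\Phi_E$ as $(p_*\boxtimes\id)\circ(\Delta^*\boxtimes\id)\circ(\id\boxtimes\Phi_E^{\pt})$ (your $(-\boxtimes E)$ is exactly the paper's $\id\boxtimes\Phi_E$), then use Properties 3, 4, 6, 7 to transport this factorisation to Hochschild homology and identify the result with the convolution formula. The only minor difference is presentational---the paper writes out one large diagram while you argue in steps---and your explicit remark about needing associativity of the K\"unneth isomorphism is a point the paper uses silently.
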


\begin{proof} First notice that the Fourier-Mukai functor $\Phi _E:D^b(X)
\to D^b(Y)$ is isomorphic to the following composition of functors
$$\begin{array}{ccccccc}
D^b(X\times \pt ) & \stackrel{\id \boxtimes \Phi _E}{\longrightarrow} &
D^b(X\times X \times Y) & \stackrel{\Delta ^* \boxtimes \id }
{\longrightarrow } & D^b(X\times Y) &
\stackrel{p_*\boxtimes \id}{\longrightarrow} & D^b(\pt \times Y)\\
\vert \vert & & & & & & \vert \vert \\
D^b(X) & & & & & & D^b(Y)
\end{array}
$$

Now it follows form properties 3,4,6,7 above that the corresponding
linear map $HH(\Phi _E):HH(X)\to HH(Y)$ is
equal to the following composition
$$\begin{array}{ccccc}
HH(X) & & & &  \\
\vert \vert & & & &  \\
HH(X)\otimes HH(\pt ) & \stackrel{\id \otimes Eu(E)}
{\longrightarrow} &
HH(X)\otimes HH(X\times Y) & & HH(Y)
 \\
 & & \downarrow \id \otimes K^{-1} & & \vert \vert \\
 & &
HH(X)\otimes HH(X)\otimes HH (Y) &  &  HH(pt) \otimes HH(Y) \\
& & \downarrow K\otimes \id &  &  \uparrow HH (p_*)\otimes \id \\
& & HH(X\times X)\otimes HH(Y) & \stackrel
{HH (\Delta ^*)\otimes \id }{\longrightarrow} &
HH(X)\otimes HH(Y)
\end{array}
$$

Finally notice that the composition   $\id \otimes (K^{-1} \cdot Eu(E))$
 is equal to $\id \otimes
Eu ^\prime (E).$ Also the composition
$(HH(p_*)\cdot HH(\Delta ^*) \cdot K)\otimes \id$
is the map
$\langle ,\rangle _X \otimes \id$. This proves the lemma.
\end{proof}

\begin{cor} \label{nondeg-pairing}
The pairing $\langle ,\rangle _X$ is nondegenerate.
\end{cor}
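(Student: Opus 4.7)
The plan is to deduce nondegeneracy of $\langle,\rangle_X$ directly from Lemma \ref{main-geometric} by specializing to the case $E = \Delta_*\mathcal{O}_X$. Since $\Phi_{\Delta_*\mathcal{O}_X}$ is the identity functor on $D^b(X)$ (and hence on $HH(X)$ by Property 2), the lemma provides an explicit ``copairing'' dual to $\langle,\rangle_X$.

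First I would set $\omega := \mathrm{Eu}'(\Delta_*\mathcal{O}_X) \in HH(X)\otimes HH(X)$ and write it as a finite sum $\omega = \sum_i a_i\otimes b_i$ with $a_i,b_i \in HH(X)$. By Lemma \ref{main-geometric} applied to $E = \Delta_*\mathcal{O}_X$ (taking $Y=X$), the identity map $HH(X)\to HH(X)$ equals the composition
\begin{equation*}
HH(X)\xrightarrow{\id\otimes\omega}HH(X)\otimes HH(X)\otimes HH(X)\xrightarrow{\langle,\rangle_X\otimes\id}HH(X).
\end{equation*}
Unpacking this, for every $c \in HH(X)$ we get the explicit reproducing formula
\begin{equation*}
c = \sum_i \langle c, a_i\rangle_X \cdot b_i.
\end{equation*}

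From this the argument is immediate: if $c \in HH(X)$ satisfies $\langle c, a\rangle_X = 0$ for all $a \in HH(X)$, then in particular $\langle c, a_i\rangle_X = 0$ for each $i$, whence $c = 0$ by the reproducing formula. Thus the pairing is nondegenerate in its first argument. By the (graded-)symmetry $\langle a,b\rangle_X = (-1)^{\deg(a)\deg(b)}\langle b,a\rangle_X$ noted in the remark after Definition \ref{natural-paring-geometric}, nondegeneracy in the second argument follows as well.

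There is no real obstacle here — the work was already done in Lemma \ref{main-geometric}. The only slightly subtle point worth checking is that the reproducing formula genuinely produces \emph{every} $c$, not merely $c$ modulo some subspace; but this is guaranteed because the composition above is the identity functor's action on Hochschild homology, which is the identity map of $HH(X)$.
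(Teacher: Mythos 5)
Your proof is correct and is essentially the paper's own argument: both specialize Lemma \ref{main-geometric} to $E=\Delta_*\cO_X$, use that $\Phi_{\Delta_*\cO_X}$ induces the identity on $HH(X)$, and read off that $\langle c,-\rangle_X=0$ forces $c=0$. The only cosmetic difference is in the last step, where the paper passes from injectivity of $HH(X)\to HH(X)^*$ to bijectivity via finite-dimensionality, while you instead invoke the graded symmetry of the pairing (Property 5) to handle the second argument; either finish is fine.
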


\begin{proof} The Fourier-Mukai functor $\Phi _{\Delta _*\cO _X}:
D^b(X)\to D^b(X)$ is isomorphic to the identity. Hence it follows from
Lemma \ref{main-geometric} that the map $HH(X)\to HH(X)^*$
defined by $\langle ,\rangle _X$ is injective. Since the space
$HH(X)$ is finite dimensional this map is bijective.
\end{proof}

\begin{remark} It follows from Lemma \ref{main-geometric} and
Corollary \ref{nondeg-pairing} that the Euler class
$$Eu^\prime (\Delta _* \cO _X)\in HH(X)\otimes HH(X)$$
"is the pairing" $\langle ,\rangle _X.$ That is, if $\{ e_i\}$
is a basis of $HH(X)$ and $\{ f_i\}$ is the right-dual basis (i.e.
$\langle e_i,f_j\rangle _X=\delta _{ij}$) then $Eu^\prime (\Delta _* \cO _X)=
\sum f_i\otimes e_i.$
\end{remark}

\begin{defi}\label{general-hochschild-homology}
 Let $E\in D^b(X\times X)$ and consider again the diagram
$$X\times X \stackrel{\Delta}{\leftarrow }X \stackrel {p}{\to} \pt.$$
Define the $i$-th Hochschild homology $HH_i(E)$ to be
the space $\Hom ^{-i}_{D^b(X)}(\cO _X, \bL \Delta ^*E).$  Then the total space
$HH_\bullet (E)$ is finite dimensional because $X$ is smooth and proper.
Note that $HH(\Delta _*\cO _X)=HH(X).$
\end{defi}

\begin{lemma} \label{computation} Let $E\in D^b(X\times X)$ and let
$Eu^\prime (E)=\sum _sa_s\otimes b_s\in HH(X)\otimes HH(X).$ Then
$$\sum _i(-1)^i\dim HH_i(E)=\sum _s\langle a_s,b_s\rangle _X$$
\end{lemma}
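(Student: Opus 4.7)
My plan is to show that both sides of the claimed equality coincide with the single element
$$HH(p_*)\bigl(HH(\Delta^*)(Eu(E))\bigr) \in HH(\pt) = k,$$
which reduces the lemma to a direct comparison of two computations.

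\emph{Right-hand side.} By the very definition of $Eu'$, one has $K\bigl(\sum_s a_s\otimes b_s\bigr) = Eu(E)$. Unwinding Definition \ref{natural-paring-geometric} then gives
$$\sum_s \langle a_s, b_s\rangle_X = \bigl(HH(p_*)\circ HH(\Delta^*)\circ K\bigr)\Bigl(\sum_s a_s \otimes b_s\Bigr) = HH(p_*)\bigl(HH(\Delta^*)(Eu(E))\bigr),$$
which is the target expression.

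\emph{Left-hand side.} Definition \ref{general-hochschild-homology} identifies $HH_i(E)$ with $H^{-i}\bigl(\bR\Gamma(X, \bL\Delta^* E)\bigr)$, so after re-indexing the alternating sum reads
$$\sum_i (-1)^i \dim HH_i(E) = \sum_j(-1)^j \dim H^j\bigl(\bR\Gamma(X, \bL\Delta^*E)\bigr) = Eu\bigl(\bR\Gamma(X, \bL\Delta^*E)\bigr),$$
using the formula for the Euler class of a complex of vector spaces (the last assertion of Property 7). Now $\bR\Gamma\circ \bL\Delta^*\colon D^b(X\times X)\to D^b(\pt)$ is itself a Fourier-Mukai functor, whose induced map on Hochschild homology is the composition $HH(p_*)\circ HH(\Delta^*)$ by Properties 2 and 3. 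Applying the naturality square from Property 7 to this functor with input $N = E$ yields
$$Eu\bigl(\bR\Gamma(X, \bL\Delta^* E)\bigr) = HH(p_*)\bigl(HH(\Delta^*)(Eu(E))\bigr),$$
again matching the target expression.

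Comparing the two computations proves the lemma. The only substantive point is the assertion that $\bR\Gamma\circ\bL\Delta^*$ is indeed a Fourier-Mukai functor to which Property 7 applies; this is verified by the projection formula $\bR\Delta_*\bL\Delta^*F \cong \Delta_*\cO_X\otimes^{\bL} F$, which exhibits $\Delta_*\cO_X\in D^b\bigl((X\times X)\times\pt\bigr)$ as its kernel. Once this is acknowledged, every remaining step is bookkeeping with the Kunneth map and the listed properties of $HH$.
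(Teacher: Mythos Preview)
Your proof is correct and follows essentially the same route as the paper: both arguments show that each side of the claimed equality coincides with $HH(p_*)\bigl(HH(\Delta^*)(Eu(E))\bigr)$ by chasing the commutative square of Property~7 along the composite $D^b(\pt)\stackrel{\Phi_E}{\to}D^b(X\times X)\stackrel{\Delta^*}{\to}D^b(X)\stackrel{p_*}{\to}D^b(\pt)$ and then invoking Definitions~\ref{natural-paring-geometric} and~\ref{general-hochschild-homology}. The only cosmetic difference is that you package $\bR\Gamma\circ\bL\Delta^*$ as a single Fourier--Mukai functor and verify this via the projection formula, whereas the paper applies Property~7 to each of the three factors separately.
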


\begin{proof}  Properties 2,3,7 above imply that the following diagram
commutes
$$\begin{array}{ccccccc}
D^b(\pt ) & \stackrel{\Phi _E}{\longrightarrow} & D^b(X\times X) &
\stackrel{\Delta ^*}{\longrightarrow} & D^b(X) & \stackrel{p_*}
{\longrightarrow} & D^b(\pt)\\
\downarrow Eu & & \downarrow Eu & & \downarrow Eu & & \downarrow Eu \\
HH(\pt) & \stackrel{HH(\Phi _E)}{\longrightarrow} & HH(X\times X) &
\stackrel{HH(\Delta ^*)}{\longrightarrow} & HH(X) & \stackrel{HH(p_*)}
{\longrightarrow} & HH(\pt)
\end{array}
$$
Now the lemma immediately follows from Definitions
\ref{natural-paring-geometric},
\ref{general-hochschild-homology} and the last part in Property 7.
\end{proof}

We are ready for the geometric Hochschild homology version of the
Lefschetz fixed point theorem.

\begin{theo} \label{LFP-Hochschild-homology-geometric-in-text}
Let $X$ be a smooth projective variety over a field $k$
and $E\in D^b(X\times X).$ For each $j$ consider the linear
endomorphism $HH_j(\Phi _E)$ of $HH_j(X).$ Then there is the equality
$$\sum _i(-1)^i\dim HH_i(E)=\sum _j(-1)^j\Tr HH_j(\Phi _E).$$
\end{theo}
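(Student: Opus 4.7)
The plan is to compute both sides of the equality in terms of the coefficients of $Eu^\prime(E)\in HH(X)\otimes HH(X)$. Writing $Eu^\prime(E)=\sum_s a_s\otimes b_s$ with homogeneous tensors, Lemma \ref{computation} already identifies the left hand side with $\sum_s \langle a_s,b_s\rangle_X$. Because $Eu(E)\in HH_0(X\times X)$ and the Kunneth isomorphism preserves the total grading, $Eu^\prime(E)$ lies in $\bigoplus_i HH_i(X)\otimes HH_{-i}(X)$, so we may assume $\deg(a_s)=-\deg(b_s)=:-i_s$. The whole task is therefore to rewrite the supertrace $\sum_j (-1)^j\Tr HH_j(\Phi_E)$ in the same form and then compare Koszul signs.

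For the supertrace, by Lemma \ref{main-geometric} the operator $HH(\Phi_E)$ acts by $x\mapsto \sum_s\langle x,a_s\rangle_X\,b_s$. Since the pairing $\langle,\rangle_X$ factors through $HH(\pt)=HH_0$, it vanishes on summands whose total degree is nonzero, so on $HH_j(X)$ only the terms with $i_s=-j$ contribute, and $HH_j(\Phi_E)$ is a sum of rank-one operators of the form $x\mapsto \langle x,a_s\rangle_X b_s$. Picking a basis $\{e_k\}$ of $HH_j(X)$ and the right-dual basis $\{f_k\}\subset HH_{-j}(X)$ (which exists by non-degeneracy of $\langle,\rangle_X$, Corollary \ref{nondeg-pairing}), one expands $a_s$ in $\{f_k\}$ and $b_s$ in $\{e_k\}$ and reads off
$$\Tr\bigl(x\mapsto \langle x,a_s\rangle_X b_s\bigr)=\langle b_s,a_s\rangle_X,$$
so that
$$\sum_j(-1)^j\Tr HH_j(\Phi_E)=\sum_s(-1)^{-i_s}\langle b_s,a_s\rangle_X=\sum_s(-1)^{i_s}\langle b_s,a_s\rangle_X.$$

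To finish, I would invoke the supersymmetry $\langle a_s,b_s\rangle_X=(-1)^{\deg(a_s)\deg(b_s)}\langle b_s,a_s\rangle_X$ from the remark after Definition \ref{natural-paring-geometric}, which produces the sign $(-1)^{-i_s^2}=(-1)^{i_s}$, exactly matching the sign in the supertrace. Hence $\sum_s\langle a_s,b_s\rangle_X=\sum_j(-1)^j\Tr HH_j(\Phi_E)$, and combining with Lemma \ref{computation} yields the theorem. The only potential obstacle is this sign bookkeeping, but the two sources of signs — one from the grading of the rank-one decomposition of $HH(\Phi_E)$, and one from the Koszul rule for $\langle,\rangle_X$ — both produce the factor $(-1)^{i_s}$ and cancel cleanly against each other.
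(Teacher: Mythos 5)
Your proof is correct and is essentially the paper's own argument: both rest on Lemma \ref{computation} for the left-hand side, Lemma \ref{main-geometric} for the convolution description of $HH(\Phi_E)$, and the graded symmetry of $\langle\,,\rangle_X$ (Property 5) to reconcile the signs; the paper merely expands $Eu^\prime(E)$ in a pair of dual bases from the outset instead of decomposing $HH(\Phi_E)$ into rank-one operators, which is a cosmetic difference. (One harmless slip: the condition for a summand to contribute to $\Tr HH_j(\Phi_E)$ should be $i_s=j$ rather than $i_s=-j$, but since $(-1)^{-i_s}=(-1)^{i_s}$ this does not affect the sign bookkeeping or the conclusion.)
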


\begin{proof} Choose a homogeneous basis $\{v_m\}$ of $HH_\bullet (X)$
and let
$\{\bar{v}_m\}\subset HH_\bullet (X)$ be the left-dual basis with
respect to $\langle ,\rangle _X$, i.e.
$\langle \bar{v}_m,v_n\rangle _{X}=\delta _{ mn}.$
Let
$$Eu (E)^\prime =\sum _{m,n}\alpha _{mn}\cdot \bar{v}_m\otimes v_n$$
for $\alpha _{mn}\in k.$

Then by Lemma \ref{computation}
$$\sum _i(-1)^i \dim HH_i(E)=\sum _{m}\alpha _{mm}.$$
On the other hand by Lemma \ref{main-geometric}
$$HH (\Phi _E)(v_l)=
\sum _{m,n}\alpha _{mn}\cdot \langle v_l,\bar{v}_m\rangle _X\cdot v_n$$
By Property 5 above
$$\langle v_l,\bar{v}_m\rangle _X=(-1)^{\deg (v_l)\deg (\bar{v}_m)}
\langle \bar{v}_m,v_l\rangle _{X}=(-1)^{\deg (v_l)}
\langle \bar{v}_m,v_l\rangle _{X}=(-1)^{\deg (v_l)}\delta _{lm}.$$
So the trace of the linear operator
$HH(\Phi _E)$ on $HH (X)$ equals
$\sum _m(-1)^{\deg (v_m)}\alpha _{mm}.$
And its supertrace is
$$\sum _j(-1)^j\Tr HH_j(\Phi _E)=\sum _m\alpha _{mm}$$
which proves the theorem.
\end{proof}

Next we want to discuss the Hirzebruch-Riemann-Roch (HRR) theorem for
Hochschild homology, which is closely related to the Lefschetz fixed point
theorem.

\begin{defi} \label{def-of-dot}
Let $E,F\in D^b(Y).$ We define the integer
$$E\cdot F :=\sum _j(-1)^j\dim H ^j(Y,E\stackrel{\bL}{\otimes }F).$$
It may be called the {\rm intersection} of $E$ and $F.$
\end{defi}

\begin{remark} \label{coincidence-of-dots}
If $Y=X\times X$ and $F$ is the structure sheaf of the diagonal
 we have $E\cdot F=\sum _i(-1)^iHH_i(E).$
 In particular $\Delta _*\cO _X \cdot \Delta _*\cO _X=
\sum _i(-1)^iHH_i(X).$
\end{remark}

The next proposition is the HRR theorem for Hochschild homology.

\begin{prop} \label{HRR-HH-geometric}
Let $Y$ be a smooth projective variety and $E,F\in D^b(Y).$ Then
$$E\cdot F=\langle Eu(E),Eu(F)\rangle _Y.$$
\end{prop}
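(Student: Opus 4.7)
The plan is to identify $E\cdot F$ with an Euler class and then peel it apart using Properties 3, 6 and 7 of Hochschild homology, exactly the same ingredients that powered Lemma \ref{main-geometric} and Theorem \ref{LFP-Hochschild-homology-geometric-in-text}.

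First, I would observe that $E\cdot F$ is itself an Euler class. Consider $E\boxtimes F \in D^b(Y\times Y)$; applying $\Delta^\ast$ gives $E\stackrel{\bL}{\otimes} F \in D^b(Y)$, and then $p_\ast$ gives the complex of $k$-vector spaces $\bR\Gamma(Y,E\stackrel{\bL}{\otimes}F)$. By Property 7 applied to this complex of vector spaces,
$$E\cdot F = \sum_j (-1)^j \dim H^j(Y, E\stackrel{\bL}{\otimes}F) = Eu\bigl(p_\ast \Delta^\ast (E\boxtimes F)\bigr) \in HH(\pt) = k.$$

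Next, I would apply the naturality of the Euler class (Property 7, or equivalently the commutative square at the end of the proof of Lemma \ref{computation}) twice, viewing $E\boxtimes F$ as an object of $D^b(\pt \times (Y\times Y))$ and then pushing through the maps $\Delta^\ast$ and $p_\ast$:
$$Eu\bigl(p_\ast \Delta^\ast (E\boxtimes F)\bigr) \;=\; HH(p_\ast)\cdot HH(\Delta^\ast)\bigl(Eu(E\boxtimes F)\bigr).$$

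The remaining step is to identify $Eu(E\boxtimes F)$ with $K(Eu(E)\otimes Eu(F))$. For this I would invoke Property 6, taking $X=Z=\pt$ and substituting our $E,F$ for the bimodules in the diagram: applied to the generator $1\otimes 1 \in HH(\pt)\otimes HH(\pt)$, the top route yields $K(Eu(E)\otimes Eu(F))$ while the bottom route yields $HH(\Phi_{E\boxtimes F})(1) = Eu(E\boxtimes F)$. Substituting this and comparing with Definition \ref{natural-paring-geometric} gives
$$E\cdot F = HH(p_\ast)\cdot HH(\Delta^\ast)\cdot K\bigl(Eu(E)\otimes Eu(F)\bigr) = \langle Eu(E), Eu(F)\rangle_Y,$$
which is the desired equality.

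The only slightly delicate point is making sure the variances line up in the application of Property 6 (i.e.\ choosing the right $X,Y,Z,W$ so that $E\boxtimes F$ is interpreted as an object of $D^b(\pt\times (Y\times Y))$ rather than on a different product), but this is purely bookkeeping and does not require any new input beyond the listed properties. No analytic or cohomological computation is needed; the entire argument is a formal unwinding of definitions, completely parallel to the proof of Theorem \ref{LFP-Hochschild-homology-geometric-in-text}.
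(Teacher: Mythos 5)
Your proposal is correct and follows essentially the same route as the paper: both identify $E\cdot F$ with $Eu\bigl(p_*\Delta^*\Phi_{E\boxtimes F}(k)\bigr)$ via the last part of Property 7, push the Euler class through $\Delta^*$ and $p_*$ by the naturality in Property 7, and use Property 6 (with the point factors) to rewrite $Eu(E\boxtimes F)$ as $K(Eu(E)\otimes Eu(F))$, matching Definition \ref{natural-paring-geometric}. No gaps; the bookkeeping concern you raise about variances is handled exactly as you suggest.
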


\begin{proof}
The diagram
$$\begin{array}{ccccccc}
D^b(\pt ) & \stackrel{\Phi _{E\boxtimes F}}{\longrightarrow} & D^b(Y\times Y) &
\stackrel{\Delta ^*}{\longrightarrow} & D^b(Y) & \stackrel{p_*}
{\longrightarrow} & D^b(\pt)\\
\downarrow Eu & & \downarrow Eu & & \downarrow Eu & & \downarrow Eu \\
HH(\pt) & \stackrel{HH(\Phi _{E\boxtimes F})}{\longrightarrow} & HH(Y\times Y) &
\stackrel{HH(\Delta ^*)}{\longrightarrow} & HH(Y) & \stackrel{HH(p_*)}
{\longrightarrow} & HH(\pt)
\end{array}
$$
commutes by Property 7 above. By definition the number $E\cdot F$ is equal to
the Euler characteristic of the complex of vector spaces $p_*\cdot \Delta ^* \cdot
\Phi _{E\boxtimes F}(k).$ Hence, by the last part of Property 7, it is equal to
$$E\cdot F=Eu \cdot p_*\cdot \Delta ^* \cdot
\Phi _{E\boxtimes F}(k).$$

On the other hand, by definition of the Euler class and Property 6 we have
$$HH(\Phi _{E\boxtimes F})\cdot Eu(k)=K(Eu(E)\otimes Eu(F)).$$
It follows that
$$\langle Eu(E),Eu(F)\rangle _Y=HH(p_*)\cdot HH (\Delta ^*)\cdot HH(\Phi _{E\boxtimes F})\cdot Eu (k).$$
This proves the proposition.
\end{proof}

\section{Singular cohomology of smooth complex projective varieties}

Assume now that  $k=\bbC.$

Let $X$ be a smooth complex projective variety and consider the singular
cohomology $H^\bullet (X,\bbC).$ It has the Hodge decomposition
$$H^i(X,\bbC )=\bigoplus _{p+q=i}H^p(X,\Omega ^q).$$
Let $Y$ be another smooth complex projective variety and $f:X\to Y$ be
a morphism. There is the induces degree preserving morphism on cohomology
$$f^*:H^\bullet (Y,\bbC)\to H^\bullet (X,\bbC),$$
and hence by Poincare duality
$H^i(X,\bbC )^*\simeq H^{2\dim (X)-i}(X,\bbC)$ and
$H^i(Y,\bbC )^*\simeq H^{2\dim (X)-i}(Y,\bbC)$ the map
$$f_*:H^\bullet (X,\bbC)\to H^{\bullet +2\dim (Y)-2\dim (X)}(Y,\bbC).$$

Consider the projections $X\stackrel{q}{\leftarrow}X\times Y\stackrel{p}
{\to }Y.$ Then any class $\alpha \in H^\bullet (X\times Y)$ defines
the corresponding convolution map
$$H^\bullet (X,\bbC)\to H^\bullet (Y,\bbC),\quad \beta \mapsto
p_*(\alpha \cup q^*\beta).$$

For any object $S\in D^b(X)$ there is its Chern character
$$ch(S)\in\bigoplus _pH^p(X,\Omega ^p _X)\subset H^\bullet (X,\bbC).$$
Recall also the Todd class $td_X\in \oplus _pH^p(X,\Omega ^p _X)$
 and its square root $\sqrt{td_X}$ (which is uniquely defined if
 one requires its degree zero term to be 1).

\begin{defi} For any $S\in D^b(X)$ its
Mukai vector $$\upsilon (S)\in \bigoplus _pH^p(X,\Omega ^p _X)$$
is the element $\upsilon (S)=ch(S)\cup \sqrt{td_X}.$ This gives
a map $\upsilon :D^b(X)\to H^\bullet (X,\bbC)$ from objects of the derived
category to the singular cohomology.
\end{defi}

\begin{defi} Any object $E\in D^b(X\times Y)$ defines the linear map
$H^\bullet(\Phi _E):H^\bullet (X,\bbC)\to H^\bullet (Y,\bbC)$ which is the
convolution with the Mukai vector $\upsilon (E)$
$$H^\bullet (\Phi _E)(\beta)=p_*(\upsilon (E)\cup q^*\beta).$$
\end{defi}

It follows from the Grothendieck-Riemann-Roch theorem that the following
diagram commutes
$$\begin{array}{ccc}
D^b(X) & \stackrel{\Phi _E}{\longrightarrow} & D^b(Y)\\
\upsilon \downarrow & & \downarrow \upsilon \\
H^\bullet (X,\bbC) & \stackrel{H^\bullet (\Phi _E)}{\longrightarrow }
& H^\bullet (Y,\bbC)
\end{array}
$$
The map $H^\bullet (\phi _E)$ does not preserve the degree of the cohomology
but it preserves the Hodge verticles
$$\bigoplus _{p-q=\text{fixed}}H^p(X,\Omega ^q).$$
Hence $H^\bullet (\Phi _E)$ preserves the parity of the degree
of the cohomology,
i.e. it is the direct sum of operators $H^{\text{ev}}(\Phi _E)$ and
$H^{\text{odd}}(\Phi _E).$

Next is the singular cohomology version of LFP theorem for
Fourier Mukai transforms.

\begin{theo} \label{LFP-singular-cohomology-in-text}
Let $X$ be a smooth complex projective variety and
let $E\in D^b(X\times X).$ Consider the induced linear operators
$$H^{\text{ev}}(\Phi _E):H^{\text{ev}}(X,\bbC)\to H^{\text{ev}}(X,\bbC)\quad
\text{and}\quad H^{\text{odd}}(\Phi _E):H^{\text{odd}}(X,\bbC)\to
H^{\text{odd}}(X,\bbC).$$ Then there is the equality
$$\sum _i(-1)^i\dim HH_i(E)=\Tr H^{\text{ev}}(\Phi _E)-
\Tr H^{\text{odd}}(\Phi _E).$$
\end{theo}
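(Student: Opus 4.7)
The plan is to derive Theorem \ref{LFP-singular-cohomology-in-text} from the already-established Hochschild LFP (Theorem \ref{LFP-Hochschild-homology-geometric-in-text}) by transporting the supertrace of $HH(\Phi_E)$ to a supertrace on singular cohomology via a suitable comparison isomorphism. First, Theorem \ref{LFP-Hochschild-homology-geometric-in-text} immediately rewrites the left-hand side as
$$\sum_i (-1)^i \dim HH_i(E) \;=\; \sum_j (-1)^j \Tr HH_j(\Phi_E),$$
so the task reduces to matching this right-hand side with $\Tr H^{\text{ev}}(\Phi_E) - \Tr H^{\text{odd}}(\Phi_E)$.

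Next I would invoke the Hochschild--Kostant--Rosenberg isomorphism, in the form refined by C{\u a}ld{\u a}raru, which provides
$$I^K_X : HH_i(X) \xrightarrow{\sim} \bigoplus_{q-p = i} H^p(X, \Omega^q_X).$$
The arithmetic observation $q - p \equiv q + p \pmod{2}$ is essential: it implies that this graded isomorphism respects the $\bbZ/2\bbZ$ parity, so that
$$HH_{\text{ev}}(X) \cong H^{\text{ev}}(X, \bbC) \quad \text{and} \quad HH_{\text{odd}}(X) \cong H^{\text{odd}}(X, \bbC)$$
under the Hodge decomposition. In particular, a Hochschild supertrace computed after applying $I^K_X$ equals a cohomological even-minus-odd trace.

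Finally, I would appeal to Theorem 1.2 of \cite{MaSte} (which rests on \cite{Cal1}, \cite{Cal2}, \cite{Ram}), asserting that under the C{\u a}ld{\u a}raru identifications the Hochschild operator $HH(\Phi_E)$ is intertwined with the singular cohomology operator $H^\bullet(\Phi_E)$, namely convolution with the Mukai vector $\upsilon(E)$. Combining this intertwining with the parity identification of the previous paragraph yields
$$\sum_j (-1)^j \Tr HH_j(\Phi_E) \;=\; \Tr H^{\text{ev}}(\Phi_E) - \Tr H^{\text{odd}}(\Phi_E),$$
which finishes the proof. The genuine difficulty lies in the compatibility of the HKR--C{\u a}ld{\u a}raru isomorphism with arbitrary Fourier--Mukai kernels, i.e.\ the nontrivial content of \cite{MaSte}; granting that result, the present theorem reduces to the parity bookkeeping described above and a one-line appeal to Theorem \ref{LFP-Hochschild-homology-geometric-in-text}.
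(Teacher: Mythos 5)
Your proposal is correct and follows essentially the same route as the paper: reduce to Theorem \ref{LFP-Hochschild-homology-geometric-in-text}, then transport the Hochschild supertrace to singular cohomology via the (Todd-twisted) HKR isomorphism $I^X$, using Theorem 1.2 of \cite{MaSte} for the intertwining and the observation that $p-q\equiv p+q \pmod 2$ so that $I^X$ is a parity-preserving isomorphism and hence preserves supertraces. The only point worth making explicit is that the intertwining in \cite{MaSte} holds for $I^X = (\cup\sqrt{td_X})\circ I^X_{HKR}$ rather than for the bare HKR map, but since $\sqrt{td_X}$ is concentrated in even degree this does not affect the parity bookkeeping.
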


\begin{proof} We deduce this theorem from Theorem
\ref{LFP-Hochschild-homology-geometric-in-text}.

Since $X$ is a smooth variety over a field of characteristic zero one has
the Hochschild-Kostant-Rosenberg isomorphism
$$I^X_{HKR}:HH_\bullet (X)\stackrel{\sim}{\longrightarrow }
\bigoplus _{p,q} H^p(X,\Omega ^q_X),$$
which identifies the space $HH_i(X)$ with the Hodge vertical
$\bigoplus _{p-q=-i}H^p(X,\Omega _X^q).$ Denote by $I^X$ the composition
$$I^X:HH_\bullet (X)\stackrel{I^X_{HKR}}{\longrightarrow}
\bigoplus _{p,q}H^p(X,\Omega ^q_X)\simeq H^\bullet (X,\bbC)\stackrel
{\cup \sqrt{td_X}}{\longrightarrow} H^\bullet(X,\bbC)$$

We will use the following important result from \cite{MaSte}, Thm.1.2 (which
in turn is heavily based on the work of Caldararu \cite{Cal1},\cite{Cal2}  and Ramadoss
\cite{Ram}).

\begin{theo} \label{Hochschild=singular}
Let $X$ and $Y$ be smooth complex projective varieties and
$\cE\in D^b(X\times Y).$ Then the following diagram commutes
$$\begin{array}{ccc}
HH (X) & \stackrel{HH  (\Phi _{\cE})}{\longrightarrow}
& HH(Y) \\
I^X \downarrow & & \downarrow I^Y\\
H^\bullet (X,\bbC ) & \stackrel{H^\bullet (\Phi _{\cE})}{\longrightarrow} &
H^\bullet (Y,\bbC)
\end{array}
$$
\end{theo}
We apply this theorem in case $X=Y$ and $\cE =E.$ Notice that $I^X$ is an isomorphism,
which preserves the parity of the cohomology space, i.e. it is an isomorphism
of $\bbZ /(2)$-graded spaces. This implies that
the supertrace $\Tr H^{\text{ev}}(\Phi _E)-
\Tr H^{\text{odd}}(\Phi _E)$ equals the supertrace
$\sum _j(-1)^j\Tr HH_j(\Phi _E).$ So Theorem \ref{LFP-singular-cohomology-in-text}
 follows from Theorem
\ref{LFP-Hochschild-homology-geometric-in-text}.
\end{proof}

\begin{remark} \label{formula-for-dot-by-class}
Let $Y$ be a smooth complex projective variety and
let $E,F\in D^b(Y).$ The Hirzebruch-Riemann-Roch theorem implies
that
$$E\cdot F =\int _YchE\cup chF\cup td_Y$$
(Definition \ref{def-of-dot}). Let now $Y=X\times X$ and $F=\Delta _*
\cO _X.$ By Grothendieck-Riemann-Roch theorem we have
$$ch(\Delta _*\cO _X)=\Delta _*(td _X)\cup (td_{X\times X})^{-1}$$
Hence by Remark \ref{coincidence-of-dots} and the above formula we get
$$\sum (-1)^i\dim HH_i(E)=\Delta _*\cO _X \cdot E
=\int _{X\times X}chE\cup \Delta _*(td _X)=
\int _X\Delta ^*(chE) \cup td _X.$$
This gives a formula for the left hand side in the LFP theorem
in terms of the Chern character of $E.$
\end{remark}

\section{Singular homology of smooth complex projective varieties}
\label{section-singular-homology}

Let $X$ be a smooth complex projective variety and consider its singular
homology
$$H_\bullet (X)=\bigoplus _jH_j(X,\bbC).$$

Let $f:X\to X$ be a morphism. Then one has the induced
linear maps $H_j(f):H_i(X,\bbC)\to
H_j(X,\bbC).$ Denote by $\Gamma (f)\subset X\times X$ the graph of the
morphism $f$ and consider its structure sheaf $\cO _{\Gamma (f)}$
is an object in $D^b(X\times X).$ Next is the version of LFP
theorem for singular homology.

\begin{theo} \label{LFP-singular-homology-in-text}
In the previous notation there is the equality
$$\sum _i(-1)^i\dim HH_i(\cO _{\Gamma (f)})=\sum _j(-1)^j\Tr H_j(f).$$
\end{theo}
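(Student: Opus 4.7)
The plan is to deduce this theorem from Theorem \ref{LFP-singular-cohomology-in-text} applied to $E = \cO_{\Gamma(f)}$, together with Poincaré duality between singular homology and cohomology. I envisage three main steps.

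First, I would identify the induced operator $H^\bullet(\Phi_{\cO_{\Gamma(f)}})$ on $H^\bullet(X,\bbC)$ explicitly. Let $\iota = (\id, f) \colon X \hookrightarrow X \times X$ be the graph embedding. Grothendieck-Riemann-Roch gives $\upsilon(\cO_{\Gamma(f)}) = \iota_*(td_X) \cup td_{X \times X}^{-1/2}$, and using $\iota^* td_{X \times X} = td_X \cup f^* td_X$ together with the projection formula and the identities $p \circ \iota = f$, $q \circ \iota = \id_X$, one should obtain
$$H^\bullet(\Phi_{\cO_{\Gamma(f)}})(\alpha) = f_*\bigl(\sqrt{td_X} \cup \alpha\bigr) \cup \bigl(\sqrt{td_X}\bigr)^{-1},$$
where $f_*$ denotes the algebraic Gysin pushforward on singular cohomology. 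Thus $H^\bullet(\Phi_{\cO_{\Gamma(f)}})$ is the conjugate of $f_*$ by multiplication by $\sqrt{td_X}$.

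Second, since $\sqrt{td_X}$ lives in $\bigoplus_p H^p(X, \Omega^p_X)$, it has even total cohomological degree, so multiplication by it and by its inverse commutes with the $\bbZ/2$-grading operator. Cyclicity of the supertrace for even operators then gives
$$\Tr H^{\text{ev}}(\Phi_{\cO_{\Gamma(f)}}) - \Tr H^{\text{odd}}(\Phi_{\cO_{\Gamma(f)}}) = \sum_i (-1)^i \Tr f_*|_{H^i(X,\bbC)}.$$

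Third, I would invoke Poincaré duality. For $X$ of complex dimension $n$, cap product with the fundamental class gives isomorphisms $H^i(X,\bbC) \cong H_{2n-i}(X,\bbC)$ under which the algebraic Gysin pushforward on cohomology corresponds, by definition, to the topological pushforward on homology. Hence $\Tr f_*|_{H^i(X,\bbC)} = \Tr H_{2n-i}(f)$, so
$$\sum_i (-1)^i \Tr f_*|_{H^i(X,\bbC)} = \sum_j (-1)^{2n-j} \Tr H_j(f) = \sum_j (-1)^j \Tr H_j(f),$$
since $2n$ is even. Combining with Theorem \ref{LFP-singular-cohomology-in-text} applied to $E = \cO_{\Gamma(f)}$ will yield the desired equality.

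The main obstacle is the first step, the explicit Mukai-vector computation: one has to carry the Todd-class factors through GRR and the projection formula carefully, and then observe that the residual Todd correction is a conjugation by an even-degree operator, so that it drops out of the supertrace. The other two steps rely only on the standard facts that $td_X$ sits in the $(p,p)$-part of the Hodge decomposition and that the Gysin map is Poincaré-dual to topological pushforward.
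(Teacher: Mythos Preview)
Your proposal is correct and follows essentially the same route as the paper: the paper also reduces to Theorem~\ref{LFP-singular-cohomology-in-text}, proves your first step as Lemma~\ref{maps-on-cohomology} (via Grothendieck--Riemann--Roch and the projection formula, yielding exactly your conjugation formula), and then observes that since $\sqrt{td_X}$ is an even-degree isomorphism the supertraces of $H^\bullet(\Phi_{\cO_{\Gamma(f)}})$ and $f_*$ agree, which combined with Poincar\'e duality gives the result.
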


\begin{remark}
In case the graph $\Gamma (f)$ intersects the diagonal
$\Delta \subset X\times X$ transversally (hence at a finite number of
points) we have
$\sum _i(-1)^iHH_i(\cO _{\Gamma (f)})=HH_0(\cO _{\Gamma (f)})$
and $\dim HH_0(\cO _{\Gamma (f)})$ is the number of fixed points of $f.$
So one recovers the classical LPF theorem.
\end{remark}

\begin{proof} We deduce Theorem \ref{LFP-singular-homology-in-text} from
Theorem \ref{LFP-singular-cohomology-in-text} using Poincare duality.

Namely by Theorem \ref{LFP-singular-cohomology-in-text} the number
$\sum _i(-1)^iHH_i(\cO _{\Gamma (f)})$ equals the supertrace of the linear
operator $H^\bullet (\Phi _{\cO _{\Gamma (f)}}):H^\bullet (X,\bbC )
\to H^\bullet (X,\bbC).$ On the other hand using the Poincare duality
$H^{2n-j}(X,\bbC) \simeq H_j(X,\bbC)$ the map $H_\bullet (f)$ induces
the map $f_*:H^\bullet (X,\bbC)\to H^\bullet (X,\bbC).$ Notice that $f_*$
preserves the degree of the cohomology, i.e. it is the sum of maps
$f_*^s:H^s(X,\bbC)\to H^s(X,\bbC).$ Thus it suffices to prove that
\begin{equation} \label{*}
{\sum _i(-1)^iHH_i(\cO _{\Gamma (f)})=\sum _s(-1)^s\Tr f_*^s}
\end{equation}

So it remains to compare the linear maps
$H^\bullet (\Phi _{\cO _{\Gamma (f)}})$ and $f_*$ and show that
their supertraces are equal. This is achieved in the next lemma.

\begin{lemma} \label{maps-on-cohomology}
Let $X$ and $Y$ be complex projective varieties and
$g:X\to Y$ be a morphism. Then the following diagram commutes
$$\begin{array}{ccc}
H^\bullet (X,\bbC ) & \stackrel{H^\bullet (\Phi _{\cO _{\Gamma (g)}})}
{\longrightarrow} & H^\bullet (Y,\bbC) \\
\downarrow \cup \sqrt{td_X}  & & \downarrow \cup \sqrt{td_Y} \\
H^\bullet (X,\bbC) & \stackrel{g_*}{\longrightarrow} & H^\bullet (Y,\bbC)
\end{array}
$$
\end{lemma}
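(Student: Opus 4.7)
The plan is to unwind the definition of $H^\bullet(\Phi_{\cO_{\Gamma(g)}})$ and reduce everything to an application of the Grothendieck-Riemann-Roch theorem to the graph embedding together with two applications of the projection formula. Let $\gamma \colon X \to X\times Y$, $\gamma(x)=(x,g(x))$, be the graph embedding, so that $\cO_{\Gamma(g)} = \gamma_*\cO_X$ and $q\circ\gamma = \id_X$, $p\circ\gamma = g$, where $q,p$ are the two projections. Note also the Kunneth-type identity $td_{X\times Y} = q^*td_X \cup p^*td_Y$, and hence $\sqrt{td_{X\times Y}}^{-1} = q^*(\sqrt{td_X})^{-1}\cup p^*(\sqrt{td_Y})^{-1}$.

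First I would apply Grothendieck-Riemann-Roch to the closed embedding $\gamma$ to compute
\begin{equation*}
ch(\cO_{\Gamma(g)}) = ch(\gamma_*\cO_X) = \gamma_*(td_X) \cup td_{X\times Y}^{-1},
\end{equation*}
which by the definition of the Mukai vector gives
\begin{equation*}
\upsilon(\cO_{\Gamma(g)}) = ch(\cO_{\Gamma(g)}) \cup \sqrt{td_{X\times Y}} = \gamma_*(td_X) \cup \sqrt{td_{X\times Y}}^{-1}.
\end{equation*}
Then for $\beta \in H^\bullet(X,\bbC)$ I compute
\begin{equation*}
H^\bullet(\Phi_{\cO_{\Gamma(g)}})(\beta) = p_*\bigl(\gamma_*(td_X) \cup q^*(\sqrt{td_X})^{-1}\cup p^*(\sqrt{td_Y})^{-1}\cup q^*\beta\bigr).
\end{equation*}

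Next I would apply the projection formula for $\gamma$ (using $q\gamma = \id_X$ and $p\gamma = g$) to push $td_X$, the factor $q^*(\sqrt{td_X})^{-1}$, the factor $p^*(\sqrt{td_Y})^{-1}$, and $q^*\beta$ through $\gamma_*$, obtaining
\begin{equation*}
p_*\gamma_*\bigl(td_X \cup (\sqrt{td_X})^{-1} \cup g^*(\sqrt{td_Y})^{-1} \cup \beta\bigr) = g_*\bigl(\sqrt{td_X}\cup \beta \cup g^*(\sqrt{td_Y})^{-1}\bigr).
\end{equation*}
A second application of the projection formula, now for $g$, pulls $(\sqrt{td_Y})^{-1}$ outside:
\begin{equation*}
H^\bullet(\Phi_{\cO_{\Gamma(g)}})(\beta) = g_*\bigl(\sqrt{td_X}\cup \beta\bigr) \cup (\sqrt{td_Y})^{-1}.
\end{equation*}
Cupping both sides with $\sqrt{td_Y}$ yields exactly the asserted commutativity of the square.

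The only nontrivial ingredient is the Grothendieck-Riemann-Roch computation of $ch(\gamma_*\cO_X)$ in step one; everything else is a careful bookkeeping exercise with the projection formula and the multiplicativity of the Todd class under external products. This will be the main point to state cleanly, but there is no real obstacle since $\gamma$ is a regular embedding of smooth projective varieties.
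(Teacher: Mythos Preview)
Your proof is correct and follows essentially the same route as the paper's: both apply Grothendieck--Riemann--Roch to the graph embedding to get $ch(\gamma_*\cO_X)=\gamma_*(td_X)\cup td_{X\times Y}^{-1}$, split $\sqrt{td_{X\times Y}}^{-1}$ via the K\"unneth identity, and then use the projection formula together with $q\gamma=\id_X$, $p\gamma=g$ to collapse the expression to $g_*(\sqrt{td_X}\cup\beta)\cup(\sqrt{td_Y})^{-1}$. The only cosmetic difference is the order in which the $\sqrt{td_Y}^{-1}$ factor is extracted (the paper pulls it out first via the projection formula for $p$, you pull it out last via the projection formula for $g$), which makes no substantive difference.
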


The theorem follows from the lemma (applied to the case $Y=X$ and $g=f$)
because the operator $\cup \sqrt{td_X}$
is an isomorphism which
preserves the parity
of the cohomology, so the supertraces of
$H^\bullet (\Phi _{\cO _{\Gamma (f)}})$ and $f_*$ are equal.
Thus it remains
to prove the lemma.

\begin{proof} Consider the diagram
$$
\xymatrix@1{X \ar@<.4ex>[r]^{i} & X\times Y \ar@<.4ex>[l]^{q}
\ar@<.4ex>[r]^{p} & Y}
$$
where $p$ and $q$ are the two projections and $i:X\to X\times Y$
is the isomorphism
of $X$ onto the graph $\Gamma (g)$ (so that $g=p\cdot i$). By definition
$$H^\bullet (\Phi _{\cO _{\Gamma (g)}})(-)=
p_*(ch(i_*\cO _X)\cup \sqrt{td _{X\times Y}})\cup q^*(-)).$$
By Grothendieck-Riemann-Roch theorem
$$ch(i_*\cO _X)\cup td_{X\times Y}=i_*(ch(\cO _X)\cup td_X)=i_*(td_X)$$
hence
$$ch(i_*\cO _X)=i_*(td_X)\cup (td _{X\times Y})^{-1}$$
and
$$\begin{array}{rcl}
H^\bullet (\Phi _{\cO _{\Gamma (g)}})(-) & = &
p_*(i_*(td_X)\cup (\sqrt{td_{X\times Y}})^{-1}\cup q^*(-)) \\
 & = & p_*(i_*(td_X)\cup q^*(\sqrt{td_{X}})^{-1}\cup q^*(-)\cup p^*(\sqrt{td _Y})^{-1}) \\
 & = & p_*(i_*(td_X)\cup q^*((\sqrt{td_{X}})^{-1}\cup -))\cup (\sqrt{td _Y})^{-1}
\end{array}
$$
Since $i^*q^*=\id$ we have for any $\beta \in H^\bullet (X,\bbC)$
$$i_*(td_X)\cup q^*\beta =i_*(td_X\cup i^*q^*\beta )=i_*(td_X\cup \beta)$$
Therefore
$$\begin{array}{rcl}
H^\bullet (\Phi _{\cO _{\Gamma (g)}})(-) & = &
p_*(i_*(td_X\cup (\sqrt{td_X})^{-1}\cup -))\cup (\sqrt{td_Y})^{-1}\\
 & = & p_*i_*(\sqrt{td_X}\cup -)\cup (\sqrt{td _Y})^{-1}\\
 & = & f_*(\sqrt{td_X}\cup -)\cup (\sqrt{td _Y})^{-1}
\end{array}
$$

This proves the lemma and Theorem \ref{LFP-singular-homology-in-text}
\end{proof}
\end{proof}

\section{ Lefschetz fixed point theorem for two maps}\label{two-maps}

In this section we prove a generalization of Theorem
\ref{LFP-singular-homology-in-text} for two maps between different
varieties of the same dimension. Namely, let $X$ and $Y$ be two smooth
complex projective varieties and $f,g:X\to Y$ be morphisms. We obtain the induced
maps
$$f_*:H_i(X)\to H_i(Y),\quad g^*:H^j(Y)\to H^j(X).$$
Assume now that $\dim X =\dim Y=d.$ Then we get the diagram of maps
$$\begin{array}{ccc}
H_i(X) & \stackrel{f_*}{\longrightarrow} & H_i(Y)\\
D\uparrow & & \downarrow D \\
H^{2d-i}(X) & \stackrel{g^*}{\longleftarrow} & H^{2d-i}(Y)
\end{array}
$$
where $D$ denotes the Poincare duality isomorphisms on $X$ and $Y.$
We want a formula for the supertrace of the composition
$$D\cdot g^* \cdot D\cdot f_*:H_\bullet (X)\to H_\bullet (X).$$

Note that the composition $D\cdot f_*\cdot D:H^j(X)\to H^j(Y)$ is nothing
but the push forward map $f_*$ on cohomology which we considered in Section
\ref{section-singular-homology} above. Hence the supertrace of the
composition $D\cdot g^* \cdot D\cdot f_*$ equals the supertrace of the
composition
$$H^\bullet(X)\stackrel{f_*}{\longrightarrow} H^\bullet(Y)
\stackrel{g^*}{\longrightarrow}H^\bullet(X)$$
We denote by $H^j(g^*\cdot f_*):H^j(X)\to H^j(X)$ the restriction
of this last composition to j-th cohomology.

\begin{theo} \label{lefschetz-theorem-for-two-maps}
Let $X$ and $Y$ be smooth complex projective varieties of
the same dimension and let $f,g:X\to Y$ be two regular maps.
 Then in the previous notation there is the equality
$$\cO _{\Gamma (f)}\cdot \cO _{\Gamma (g)}=
\sum _j(-1)^j\Tr H^j(g^*\cdot f_*).$$
Hence the number $\cO _{\Gamma (f)}\cdot \cO _{\Gamma (g)}$ also equals
the supertrace of the map $D\cdot g^* \cdot D\cdot f_*$ on homology of
$X.$
\end{theo}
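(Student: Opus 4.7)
The plan is to show that both sides of the claimed equality reduce to the purely topological intersection number $\int_{X \times Y} \eta_f \cup \eta_g$, where $\eta_f = [\Gamma(f)]$ and $\eta_g = [\Gamma(g)]$ are the fundamental classes of the two graphs in $H^{2d}(X \times Y, \bbC)$, with $d := \dim X = \dim Y$. The argument splits into two essentially independent steps.

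For the left-hand side, I would apply the Hirzebruch--Riemann--Roch formula from Remark \ref{formula-for-dot-by-class}:
$$\cO_{\Gamma(f)} \cdot \cO_{\Gamma(g)} = \int_{X \times Y} ch(\cO_{\Gamma(f)}) \cup ch(\cO_{\Gamma(g)}) \cup td_{X \times Y}.$$
The crucial observation is a degree count that is special to the equal-dimension hypothesis: since $\Gamma(f)$ and $\Gamma(g)$ are smooth of codimension $d$ in the $2d$-dimensional $X \times Y$, each of $ch(\cO_{\Gamma(f)})$ and $ch(\cO_{\Gamma(g)})$ lies in $\bigoplus_{k \geq d} H^{2k}(X \times Y, \bbC)$ with leading term $\eta_f$ and $\eta_g$ respectively. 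Their cup product therefore lies automatically in the top cohomology $H^{4d}(X \times Y)$, where only the product $\eta_f \cup \eta_g$ of the leading terms survives; and because the constant term of $td_{X \times Y}$ equals $1$, the Todd-class factor has no effect on this top-degree piece. We obtain the clean identity
$$\cO_{\Gamma(f)} \cdot \cO_{\Gamma(g)} = \int_{X \times Y} \eta_f \cup \eta_g,$$
with all Todd-class corrections dropping out automatically.

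For the right-hand side, I would realize $g^* \circ f_*$ on $H^\bullet(X, \bbC)$ as a composition of cohomological correspondences: $f_*$ has kernel $\eta_f \in H^\bullet(X \times Y)$ (acting by $\alpha \mapsto p_{Y*}(\eta_f \cup p_X^* \alpha)$), and $g^*$ has kernel $\eta_g$ viewed on $Y \times X$ via the swap $\sigma$. Forming the convolution kernel on $X \times Y \times X$, restricting to the diagonal $\Delta_X \subset X \times X$ and applying base change together with the projection formula yields the Lefschetz-style identity
$$\sum_j (-1)^j \Tr H^j(g^* \circ f_*) = \int_{X \times Y} \eta_f \cup \eta_g,$$
which combined with the first step proves the theorem. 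The final assertion about the supertrace of $D \cdot g^* \cdot D \cdot f_*$ on homology is then obtained from the Poincare-duality reformulation noted immediately before the theorem. I expect the main technical hurdle to be this second step: although the supertrace identity for composed cohomological correspondences is classical, the careful bookkeeping of degrees, signs and projections is easy to mishandle, whereas the first step is a transparent application of HRR with the degree count doing all the real work.
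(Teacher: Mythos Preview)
Your argument is correct and takes a genuinely different route from the paper's. You bypass Hochschild homology entirely and reduce both sides directly to the topological intersection $\int_{X\times Y}[\Gamma(f)]\cup[\Gamma(g)]$: the left-hand side via Hirzebruch--Riemann--Roch together with the clean codimension count that kills all higher Chern and Todd terms, and the right-hand side via the classical Lefschetz trace formula for cohomological correspondences (the kernels of $f_*$ and $g^*$ really are the bare fundamental classes $[\Gamma(f)]$, $\sigma^*[\Gamma(g)]$, with no Todd correction). By contrast, the paper first proves a Hochschild-homology identity $\cO_{\Gamma(f)}\cdot\cO_{\Gamma(g)}=(\cO_{\Gamma(f)}*\cO_{\Gamma(g)})\cdot\Delta_*\cO_X$ via a base-change lemma, applies the Hochschild LFP theorem (Theorem~\ref{LFP-Hochschild-homology-geometric-in-text}) to the convolved kernel, and then transports the result to singular cohomology through the HKR isomorphism and the commutative squares of Lemmas~\ref{maps-on-cohomology} and~\ref{maps-on-cohomology2}, finally arguing that conjugation by $\cup\sqrt{td}$ does not change the supertrace. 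Your proof is shorter and more elementary for this particular statement; the paper's proof is longer but stays within the Hochschild framework developed earlier and illustrates how Theorem~\ref{lefschetz-theorem-for-two-maps} is a formal consequence of the Hochschild LFP theorem. One structural difference worth noting: in the paper the equal-dimension hypothesis enters only at the very last step (comparing supertraces after the Todd conjugation), whereas in your argument it is used essentially from the outset in the degree count for Step~1.
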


Clearly Theorem \ref{LFP-singular-homology-in-text} is a special case of
Theorem \ref{lefschetz-theorem-for-two-maps} with $X=Y$ and $g=\id.$

\begin{proof} We give the proof in two steps. In the first one
we consider Hochschild homology and in the second - singular cohomology.

\medskip

\noindent{Step 1.}
Consider the graph $\Gamma (g)\subset X\times Y$ as a
subvariety of $Y\times X.$ We have the functors
\begin{equation}\label{functors}
\Phi _{\cO _{\Gamma (f)}}:D^b(X)\to D^b(Y),\quad \Phi _{\cO _{\Gamma (g)}}:
D^b(Y)\to D^b(X)\end{equation}
which induce linear maps
$$HH(\Phi _{\cO _{\Gamma (f)}}):HH(X)\to HH(Y),\quad
HH(\Phi _{\cO _{\Gamma (g)}}):HH(Y)\to HH(X)$$

By Property 3 in Section \ref{Hochschild-homology-section}
their composition equals
\begin{equation}\label{composition} HH(\Phi _{\cO _{\Gamma (g)}})
\cdot HH(\Phi _{\cO _{\Gamma (f)}})=HH(\Phi _{\cO _{\Gamma (f)}*\cO _{\Gamma (g)}})
\end{equation}
where $\cO _{\Gamma (f)}*\cO _{\Gamma (g)}\in D^b(X\times X)$ is
the convolution
of $\cO _{\Gamma (f)}$ and $\cO _{\Gamma (g)}.$ By Theorem
\ref{LFP-Hochschild-homology-geometric-in-text} and Remark \ref{coincidence-of-dots}
we have
\begin{equation}\label{HH1} (\cO _{\Gamma (f)}*\cO _{\Gamma (g)})\cdot \Delta _*\cO _X=
\sum _j(-1)^j\Tr HH_j(\Phi _{\cO _{\Gamma (f)}*\cO _{\Gamma (g)}}).
\end{equation}

\begin{lemma} Let $E,F\in D^b(X\times Y).$ We also consider $F$
as an object in $D^b(Y\times X).$ Then
$$(E*F)\cdot \Delta _*\cO _X=E\cdot F.$$
In particular there is the equality
$(\cO _{\Gamma (f)}*\cO _{\Gamma (g)})\cdot \Delta _*\cO _X=
\cO _{\Gamma (f)}\cdot \cO _{\Gamma (g)}.$
\end{lemma}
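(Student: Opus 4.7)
My plan is to collapse both sides into the Euler characteristic of a single complex on $X\times Y$, by combining derived base change with the projection formula. First I would apply Definition \ref{def-of-dot} (with $Y$ replaced by $X\times X$) together with the projection formula for the diagonal embedding to obtain
$$(E*F)\cdot \Delta _*\cO _X=\chi(X\times X,\,(E*F)\stackrel{\bL}{\otimes }\Delta _*\cO _X)=\chi(X,\,\bL \Delta ^*(E*F)).$$
Once $\bL \Delta ^*(E*F)$ is identified with $\bR p_*(E\stackrel{\bL}{\otimes }F)$ on $X$, where $p\colon X\times Y\to X$ is the projection, Leray yields $\chi(X,\bR p_*(E\otimes F))=\chi(X\times Y,E\otimes F)=E\cdot F$ and the lemma is proved.

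To compute $\bL \Delta ^*(E*F)$, I would unfold the convolution as $E*F=\bR \pi _{XX*}(\pi _{XY}^*E\stackrel{\bL}{\otimes }\pi _{YX}^*F)$ on $X\times Y\times X$, with $\pi _{XY},\pi _{YX},\pi _{XX}$ the obvious projections (and $F$ viewed on $Y\times X$). The essential geometric input is that the fibre product of $\Delta $ with $\pi _{XX}$ is $X\times Y$ embedded via $\iota \colon (x,y)\mapsto (x,y,x)$, whose other leg is $p$; that is, the square
$$\xymatrix{X\times Y \ar[r]^{\iota } \ar[d]_{p} & X\times Y\times X \ar[d]^{\pi _{XX}} \\ X \ar[r]^{\Delta } & X\times X}$$
is Cartesian. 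Since $\pi _{XX}$ is smooth (hence flat), flat base change gives
$$\bL \Delta ^*(E*F)=\bR p_*\bigl(\bL \iota ^*\pi _{XY}^*E\stackrel{\bL}{\otimes }\bL \iota ^*\pi _{YX}^*F\bigr).$$
Because $\pi _{XY}\cdot \iota =\id _{X\times Y}$ and $\pi _{YX}\cdot \iota $ is the transposition $X\times Y\to Y\times X$ under which $F$ pulls back to itself (by the convention in the statement), the right hand side simplifies to $\bR p_*(E\stackrel{\bL}{\otimes }F)$, as required.

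The only nontrivial point is the validity of the flat base change and the Cartesianness of the square, both of which are standard in this smooth projective setup; no real obstacle arises. The particular case $E=\cO _{\Gamma (f)}$, $F=\cO _{\Gamma (g)}$ then recovers the last sentence of the lemma, as the objects $\cO _{\Gamma (f)}\in D^b(X\times Y)$ and $\cO _{\Gamma (g)}\in D^b(X\times Y)$ satisfy the convention about swapping the two factors.
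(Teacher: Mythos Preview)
Your argument is correct and follows essentially the same route as the paper: the paper sets up the same Cartesian square (with your $\iota$ and $\pi_{XX}$ denoted $\Delta_X$ and $p^Y$), invokes the same smooth base change (citing \cite{BO}, Lemma~1.3), and then identifies the resulting complex with $\bR p_*(E\stackrel{\bL}{\otimes}F)$ on $X\times Y$. The only cosmetic difference is that the paper first writes the convolution as $\bR p^Y_*\bL\Delta_Y^*(E\boxtimes F)$ via the auxiliary space $X\times Y\times Y\times X$, whereas you go directly to $\pi_{XY}^*E\stackrel{\bL}{\otimes}\pi_{YX}^*F$ on $X\times Y\times X$; these are the same object.
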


\begin{proof}
Consider the obvious diagram
\begin{equation}\label{big-diagram} \begin{array}{ccccc}
X\times Y\times Y\times X & & & &\\
\uparrow \Delta _Y & & & & \\
X\times Y\times X & \stackrel{p^Y}{\longrightarrow} & X\times X & & \\
\uparrow \Delta _X & & \uparrow \Delta _X & & \\
X\times Y & \stackrel{p^Y}{\longrightarrow} & X &
\stackrel{p^X}{\longrightarrow} & \pt
\end{array}
\end{equation}
By definition the convolution $E*F\in D^b(X\times X)$ is the object
$\bR p^Y_*\cdot \bL \Delta _Y^*(E\boxtimes F),$ and the number
$(E*F)\cdot \Delta _*\cO _X$ is the Euler characteristic of the complex
$$\bR p^X_*\cdot \bL \Delta _X^*(E*F)=
\bR p^X_*\cdot \bL \Delta _X^*\cdot \bR p^Y_*\cdot \bL \Delta _Y^*
(E\boxtimes F).$$

Since the square part of diagram \ref{big-diagram} is cartesian, the map
$p^Y$ is smooth and all the varieties are smooth projective it follows
from Lemma 1.3 in \cite{BO} that there is a base change isomorphism of
functors
$$\bL \Delta _X^*\cdot \bR p_*^Y\simeq \bR p_*^Y\cdot \bL \Delta ^*_X.$$
Hence
$$\bR p^X_*\cdot \bL \Delta _X^*(E*F)=
\bR p^X_*\cdot \bR p^Y_*\cdot \bL \Delta _X^*\cdot \bL \Delta _Y^*
(E\boxtimes F).$$

Denote $\Delta :=\Delta _Y\cdot \Delta _X:X\times Y\to
X\times Y\times X\times Y$ ($\Delta$ is the diagonal embedding) and
$p:=p^X\cdot p^Y:X\times Y\to \pt.$ We have
$$\bR p^X_*\cdot \bL \Delta _X^*(E*F)=
\bR p_*\cdot \bL \Delta ^*(E\boxtimes F)=
\bR p_*(E\stackrel{\bL}{\otimes }F)$$
and $E\cdot F$ is the Euler characteristic of the complex
$\bR p_*(E\stackrel{\bL}{\otimes }F).$ This proves the lemma.
\end{proof}

The lemma and the equality \ref{HH1} imply that
\begin{equation}\label{HH2} \cO _{\Gamma (f)}\cdot \cO _{\Gamma (g)}=
 \sum _j(-1)^j\Tr HH_j(\Phi _{\cO _{\Gamma (f)}*\cO _{\Gamma (g)}}).
\end{equation}

\medskip

\noindent{Step 2.} The functors
$\Phi _{\cO _{\Gamma (f)}}, \Phi _{\cO _{\Gamma (g)}}$
also induce the linear maps
$$H^\bullet(\Phi _{\cO _{\Gamma (f)}}):
H^\bullet(X,\bbC)\to H^\bullet(Y,\bbC)\quad
H^\bullet(\Phi _{\cO _{\Gamma (g)}}):
H^\bullet(Y,\bbC)\to H^\bullet(X,\bbC)$$
and by Theorem \ref{Hochschild=singular} the diagram
$$\begin{array}{ccccc}
HH (X) & \stackrel{HH  (\Phi _{\cO _{\Gamma (f)}})}{\longrightarrow}
& HH(Y) & \stackrel {HH  (\Phi _{\cO _{\Gamma (g)}})}{\longrightarrow} &
HH(X)\\
\downarrow I^X & & \downarrow I^Y & & \downarrow I^X \\
H^\bullet (X,\bbC) &
\stackrel{H^\bullet (\Phi _{\cO _{\Gamma (f)}})}{\longrightarrow} &
H^\bullet (Y,\bbC) &
\stackrel{H^\bullet (\Phi _{\cO _{\Gamma (g)}})}{\longrightarrow} &
H^\bullet (X,\bbC)
\end{array}
$$
commutes.
Since the map $I^X$ is an isomorphism which preserves the
parity of the grading it follows that
the supertrace of the composition
\begin{equation}\label{composition}
H^\bullet (\Phi _{\cO _{\Gamma (g)}})\cdot
H^\bullet (\Phi _{\cO _{\Gamma (f)}}):
H^\bullet (X,\bbC)\to H^\bullet (X,\bbC)\end{equation}
 equals
$\cO _{\Gamma (f)}\cdot \cO _{\Gamma (g)}.$
So it suffices to prove the following proposition.

\begin{prop} \label{hochschild-to-singular-for-two-maps}
The supertrace of the composition \ref{composition}
equals the
supertrace
$$\sum _j(-1)^j\Tr H^j(g^*\cdot f_*).$$
\end{prop}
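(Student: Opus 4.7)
The plan is to compute $H^\bullet(\Phi_{\cO_{\Gamma(g)}}) \circ H^\bullet(\Phi_{\cO_{\Gamma(f)}})$ explicitly and then extract its supertrace.

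The first step is to establish an analogue of Lemma \ref{maps-on-cohomology} for the reverse Fourier--Mukai functor $\Phi_{\cO_{\Gamma(g)}}: D^b(Y) \to D^b(X)$, where the kernel is viewed as a sheaf on $Y \times X$ (for which this functor is isomorphic to $g^*$). The proof mirrors that of Lemma \ref{maps-on-cohomology}: let $i: X \to Y \times X$ be the inclusion $x \mapsto (g(x), x)$, with projections $Y \xleftarrow{q'} Y \times X \xrightarrow{p'} X$; then $p' \circ i = \id_X$ and $q' \circ i = g$, and GRR together with the projection formula yield
$$H^\bullet(\Phi_{\cO_{\Gamma(g)}})(\beta) = \sqrt{td_X} \cup g^*\bigl((\sqrt{td_Y})^{-1} \cup \beta\bigr).$$

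The second step is to compose. Writing $L_\alpha$ for cup product by $\alpha$, the previous identity reads $H^\bullet(\Phi_{\cO_{\Gamma(g)}}) = L_{\sqrt{td_X}} \circ g^* \circ L_{(\sqrt{td_Y})^{-1}}$, while Lemma \ref{maps-on-cohomology} gives $H^\bullet(\Phi_{\cO_{\Gamma(f)}}) = L_{(\sqrt{td_Y})^{-1}} \circ f_* \circ L_{\sqrt{td_X}}$. Their composition equals
$$L_{\sqrt{td_X}} \circ g^* \circ L_{(td_Y)^{-1}} \circ f_* \circ L_{\sqrt{td_X}}.$$
I would then apply cyclicity of the supertrace---valid without signs since all $L_{\sqrt{td}}$ have even cohomological degree---and the projection formula $g^* \circ L_\alpha = L_{g^*\alpha} \circ g^*$ to rewrite the supertrace of the composition as $\Tr_s\bigl(L_{\alpha} \circ g^* \circ f_*\bigr)$ with $\alpha := td_X \cup g^*(td_Y)^{-1}$.

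The final step uses the decomposition $\alpha = 1 + \alpha_2 + \alpha_4 + \cdots$ into homogeneous pieces of even cohomological degree. Because $\dim X = \dim Y$, the operator $g^* \circ f_*$ preserves cohomological degree, whereas $L_{\alpha_{2k}}$ strictly raises it by $2k$ when $k > 0$. Hence only the $\alpha_0 = 1$ term contributes diagonal entries in a homogeneous basis, giving
$$\Tr_s\bigl(H^\bullet(\Phi_{\cO_{\Gamma(g)}}) \circ H^\bullet(\Phi_{\cO_{\Gamma(f)}})\bigr) = \Tr_s(g^* \circ f_*) = \sum_j (-1)^j \Tr H^j(g^* \cdot f_*),$$
as required. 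The main obstacle is honestly only Step 1---a routine repetition of the Lemma \ref{maps-on-cohomology} computation that must be done carefully with the reversed roles of the two projections. The essential conceptual point is the degree-shift argument in Step 3, which crucially uses the equal-dimension hypothesis to cancel all Todd-class corrections beyond degree zero.
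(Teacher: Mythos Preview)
Your proof is correct and follows essentially the same approach as the paper. Your Step~1 is exactly the paper's Lemma~\ref{maps-on-cohomology2}, and your Steps~2--3 reproduce the paper's concluding argument; the only difference is cosmetic: you use cyclicity of the supertrace to rewrite the composition as $L_\alpha\circ g^*\circ f_*$ before invoking the degree-raising argument, whereas the paper applies the same degree-raising observation directly to the longer expression $\sqrt{td_X}\cup g^*(td_Y^{-1}\cup f_*(\sqrt{td_X}\cup -))$.
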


\begin{proof} By Lemma \ref{maps-on-cohomology} above
the following diagram commutes
$$\begin{array}{ccc}
H^\bullet (X,\bbC ) & \stackrel{H^\bullet (\Phi _{\cO _{\Gamma (f)}})}
{\longrightarrow} & H^\bullet (Y,\bbC) \\
\downarrow \cup \sqrt{td_X}  & & \downarrow \cup \sqrt{td_Y} \\
H^\bullet (X,\bbC) & \stackrel{f_*}{\longrightarrow} & H^\bullet (Y,\bbC)
\end{array}
$$
The following lemma is similar.

\begin{lemma} \label{maps-on-cohomology2}
Let $X$ and $Y$ be complex projective varieties and
$g:X\to Y$ be a morphism. Then the following diagram commutes
$$\begin{array}{ccc}
H^\bullet (Y,\bbC ) & \stackrel{H^\bullet (\Phi _{\cO _{\Gamma (g)}})}
{\longrightarrow} & H^\bullet (X,\bbC) \\
\uparrow \cup \sqrt{td_Y}  & & \uparrow \cup \sqrt{td_X} \\
H^\bullet (Y,\bbC) & \stackrel{g^*}{\longrightarrow} & H^\bullet (X,\bbC)
\end{array}
$$
\end{lemma}

\begin{proof} As in the proof of Lemma \ref{maps-on-cohomology} we
consider the diagram
$$
\xymatrix@1{X \ar@<.4ex>[r]^{i} & X\times Y \ar@<.4ex>[l]^{q}
\ar@<.4ex>[r]^{p} & Y}
$$
where $p$ and $q$ are the two projections and $i:X\to X\times Y$
is the isomorphism
of $X$ onto the graph $\Gamma (g)$ (so that $g=p\cdot i$).

By definition
$$H^\bullet (\Phi _{\cO _{\Gamma (g)}})(-)=
q_*(ch(i_*\cO _X)\cup \sqrt{td _{X\times Y}})\cup p^*(-)).$$
By Grothendieck-Riemann-Roch theorem
$$ch(i_*\cO _X)\cup td_{X\times Y}=i_*(ch(\cO _X)\cup td_X)=i_*(td_X)$$
hence
$$ch(i_*\cO _X)=i_*(td_X)\cup (td _{X\times Y})^{-1}$$
and
$$\begin{array}{rcl}
H^\bullet (\Phi _{\cO _{\Gamma (g)}})(-) & = &
q_*(i_*(td_X)\cup (\sqrt{td_{X\times Y}})^{-1}\cup p^*(-)) \\
 & = & q_*(i_*(td_X)\cup q^*(\sqrt{td_{X}})^{-1}\cup p^*(-)
 \cup p^*(\sqrt{td _Y})^{-1}) \\
 & = & q_*(i_*(td_X)\cup q^*(\sqrt{td_{X}})^{-1}\cup p^*(-
 \cup (\sqrt{td _Y})^{-1}))
\end{array}
$$
Since $i^*q^*=\id$ we have for any $\beta \in H^\bullet (X,\bbC)$
$$i_*(td_X)\cup q^*\beta =i_*(td_X\cup i^*q^*\beta )=i_*(td_X\cup \beta)$$
Therefore
$$\begin{array}{rcl}
H^\bullet (\Phi _{\cO _{\Gamma (g)}})(-) & = &
q_*(i_*(td_X\cup (\sqrt{td_X})^{-1}) \cup p^*(-
 \cup (\sqrt{td _Y})^{-1}))\\
 & = & q_*i_*(\sqrt{td_X}\cup i^*p^*(-\cup (\sqrt{td _Y})^{-1}))\\
 & = & \sqrt{td_X}\cup g^*(-\cup (\sqrt{td _Y})^{-1})
\end{array}
$$
This proves the lemma.
\end{proof}

It follows from Lemmas \ref{maps-on-cohomology} and
\ref{maps-on-cohomology2} that
$$H^\bullet(\Phi _{\cO _{\Gamma (g)}})\cdot
H^\bullet(\Phi _{\cO _{\Gamma (f)}})(-)=
\sqrt{td_X}\cup g^*(td_Y^{-1}\cup f_*(\sqrt{td_X}\cup -)).$$
Notice that the maps $f_*$ and $g^*$ preserve the degree of
cohomology and the Todd class $td$ is a power series in Chern classes
with constant term 1. It follows that the supertrace (and the trace)
of operators $H^\bullet(\Phi _{\cO _{\Gamma (g)}})\cdot
H^\bullet(\Phi _{\cO _{\Gamma (f)}})$ and $g^*\cdot f_*$ is the same.
This proves the proposition and the theorem.
\end{proof}
\end{proof}

Note that in the above proof of Theorem \ref{lefschetz-theorem-for-two-maps}
the assumption $\dim X=\dim Y$ was used only at the very end when we
derived Proposition \ref{hochschild-to-singular-for-two-maps}
from Lemmas \ref{maps-on-cohomology} and \ref{maps-on-cohomology2}.
(Without this assumption Theorem \ref{lefschetz-theorem-for-two-maps}
is false: take for example $X=\bbP^1$ and $Y=\pt.$)

In the above notation consider the composition of maps
$$H^\bullet(\Phi _{\cO _{\Gamma (f)}})
\cdot H^\bullet (\Phi _{\cO _{\Gamma (g)}})=
H^\bullet(\Phi _{\cO _{\Gamma (f)}*\cO _{\Gamma (g)}}).$$
This composition preserves the parity of cohomology and so is the
sum of two maps $H^{\even}(\Phi _{\cO _{\Gamma (f)}*\cO _{\Gamma (g)}})$
and $H^{\odd}(\Phi _{\cO _{\Gamma (f)}*\cO _{\Gamma (g)}}).$
Then our proof of Theorem \ref{lefschetz-theorem-for-two-maps} also gives
the following

\begin{theo}
 Let $X$ and $Y$ be smooth complex projective varieties
and let $f,g:X\to Y$ be two regular maps.
 Then in the previous notation there is the equality
$$\cO _{\Gamma (f)}\cdot \cO _{\Gamma (g)}=
\Tr H^{\even}(\Phi _{\cO _{\Gamma (f)}*\cO _{\Gamma (g)}})-
\Tr H^{\odd}(\Phi _{\cO _{\Gamma (f)}*\cO _{\Gamma (g)}}).$$
\end{theo}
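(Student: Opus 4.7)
The plan is to combine the Hochschild-homology identity already extracted in Step 1 of the previous proof with the Hochschild-to-singular-cohomology comparison of \cite{MaSte} recalled in Theorem \ref{Hochschild=singular}. Crucially, the hypothesis $\dim X=\dim Y$ was only needed in Step 2 to identify the resulting endomorphism of $H^\bullet(X,\bbC)$ with $g^{*}\cdot f_{*}$; without that identification we still get the raw supertrace of $H^\bullet(\Phi_{\cO_{\Gamma(f)}*\cO_{\Gamma(g)}})$, which is exactly what the statement asks for.

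First I would reproduce Step 1 verbatim. The convolution $\cO_{\Gamma(f)}*\cO_{\Gamma(g)}$ is an object of $D^b(X\times X)$, so we may apply Theorem \ref{LFP-Hochschild-homology-geometric-in-text} with $E=\cO_{\Gamma(f)}*\cO_{\Gamma(g)}$, obtaining
\begin{equation*}
(\cO_{\Gamma(f)}*\cO_{\Gamma(g)})\cdot \Delta_{*}\cO_{X}
\;=\;\sum_{j}(-1)^{j}\Tr HH_{j}\bigl(\Phi_{\cO_{\Gamma(f)}*\cO_{\Gamma(g)}}\bigr).
\end{equation*}
The lemma established in Step 1 (which is purely formal: a base-change computation for the cartesian square attached to the diagonal) gives
$(\cO_{\Gamma(f)}*\cO_{\Gamma(g)})\cdot \Delta_{*}\cO_{X}=\cO_{\Gamma(f)}\cdot\cO_{\Gamma(g)}$, so the left-hand side of the theorem coincides with the Hochschild supertrace.

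Next I would invoke Theorem \ref{Hochschild=singular} applied to the kernel $\cE=\cO_{\Gamma(f)}*\cO_{\Gamma(g)}\in D^b(X\times X)$ (with $Y=X$). The theorem yields a commutative square identifying $H^\bullet(\Phi_{\cE})$ with $HH(\Phi_{\cE})$ via the isomorphism $I^{X}$ on both sides. Since $I^{X}$ is an isomorphism of $\bbZ/(2)$-graded vector spaces (it sends $HH_{i}(X)$ into the Hodge vertical $\bigoplus_{p-q=-i}H^{p}(X,\Omega^{q}_{X})$, which has well-defined parity $i\bmod 2$, and wedging with $\sqrt{td_{X}}$ preserves parity), the endomorphisms $HH(\Phi_{\cE})$ and $H^\bullet(\Phi_{\cE})$ have equal supertraces:
\begin{equation*}
\sum_{j}(-1)^{j}\Tr HH_{j}(\Phi_{\cE})
\;=\;\Tr H^{\even}(\Phi_{\cE})-\Tr H^{\odd}(\Phi_{\cE}).
\end{equation*}
Concatenating this with the identity from Step 1 produces exactly the equality claimed in the theorem.

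There is essentially no obstacle: every ingredient has been proved earlier. The only subtlety is to notice explicitly that once we stop at the Hochschild-versus-singular-cohomology comparison and do not try to rewrite the resulting operator as $g^{*}\cdot f_{*}$ on $H^\bullet(X,\bbC)$, the equality of supertraces goes through for arbitrary smooth complex projective $X$ and $Y$ (no equal-dimension assumption), and no additional use of Lemma \ref{maps-on-cohomology} or Lemma \ref{maps-on-cohomology2} is required.
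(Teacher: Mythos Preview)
Your proposal is correct and is precisely the argument the paper intends: the paper explicitly remarks that in the proof of Theorem \ref{lefschetz-theorem-for-two-maps} the hypothesis $\dim X=\dim Y$ enters only when passing from Lemmas \ref{maps-on-cohomology} and \ref{maps-on-cohomology2} to Proposition \ref{hochschild-to-singular-for-two-maps}, so Step 1 together with Theorem \ref{Hochschild=singular} already gives the present statement. The only cosmetic difference is that you apply Theorem \ref{Hochschild=singular} once to the convolution kernel $\cO_{\Gamma(f)}*\cO_{\Gamma(g)}$, whereas the paper's Step 2 applied it to each factor and then composed; both routes yield the same supertrace identity.
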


\part{Lefschetz fixed point theorem for smooth and proper DG algebras}

\section{Hochschild homology of DG categories}

Fix a ground field $k.$ All algebras and categories are assumed
to be $k$-linear. We write $\otimes $ for $\otimes _k$ unless specified
otherwise. We follow consistently the universal sign rule: if $x,y$ are
homogeneous elements, then $xy=(-1)^{\deg (x)\deg (y)}yx.$

For a general discussion of DG algebras, DG modules, DG categories,
etc. the reader may consult for example
\cite{BL},\cite{Ke1},\cite{Dr}. For us a {\it DG module} means a
{\it right} DG module.

\subsection{Hochschild homology}

Let us recall the Hochschild complex and Hochschild homology of DG
algebras and small DG categories \cite{Ke2},\cite{Shk}.

Let $A=(A,d)$ be a DG algebra. As usual the suspension $sA=A[1]$
denotes the shift of grading: for $a\in A$ we have
$\deg(sa)=\deg(a)-1.$ Consider the graded $k$-module
$$C_\bullet (A)=A\otimes T(A[1])=\bigoplus _{n=0}^\infty A\otimes
A[1]^{\otimes n}.$$ Its element $a_0\otimes sa_1\otimes sa_2\otimes
...\otimes sa_n$ is traditionally denoted by $a_0[a_1\vert a_2\vert
...\vert a_n].$ The space $C_\bullet (A)$ is equipped with the
differential $b=b_0+b_1,$ where $b_0$ and $b_1$ are anti-commuting
differentials defined by $$b_0(a_0)=da_0,\quad b_1(a_0)=0,$$ and
$$b_0(a_0[a_1\vert ...\vert a_n])=da_0[a_1\vert ...\vert a_n]-\sum
_{i=1}^n(-1)^{\eta _{i-1}}a_0[a_1\vert ...\vert da_i \vert
...\vert a_n],$$
$$
b_1(a_0[a_1\vert ...\vert a_n])=(-1)^{\deg (a_0)}a_0a_1[a_2\vert
...\vert a_n] +\sum _{1}^{n-1}(-1)^{\eta _i}a_0[a_1\vert
...\vert a_ia_{i+1} \vert ...\vert a_n]$$ $$ -(-1)^{\eta
_{n-1}(\deg (a_n)+1)}a_na_0[a_1\vert ...\vert a_{n-1}].
$$
for $n\neq 0,$ where $\eta _i=\deg (a_0)+\deg (sa_1)+...+\deg (sa_i).$
The complex $C_\bullet (A)$ is called the Hochschild
chain complex of $A$, and the Hochschild homology is defined as
$$HH _n(A)=H^{-n}(C_\bullet (A)).$$

Similarly one defines the Hochschild chain complex $C_\bullet (\cA)$
and the Hochschild homology $HH(\cA)=HH _\bullet (\cA)$ for a small DG
category $\cA.$ Namely, denote by $\cA ^{n+1}$ the set of sequences
of objects $\{X_0,X_1,...,X_n\},$ $X_i\in \cA.$ Fox a fixed
$\mathbb{X} =\{X_0,...,X_n\}$ denote by $C_\bullet (\cA
,\mathbb{X})$ the graded space $\Hom (X_n,X_0)\otimes \Hom
(X_{n_1},X_n)[1] \otimes ...\otimes \Hom (X_0,X_1)[1].$ Now equip
the space
$$C_\bullet (\cA)=\bigoplus _{n\geq 0}\bigoplus _{\mathbb{X} \in \cA
^{n+1}}C_\bullet (\cA ,\mathbb{X})$$ with the differential
$b=b_0+b_1$ defined in analogy with the above case of a DG algebra.
The complex $C_\bullet (\cA)$ is the Hochschild chain complex of the
DG category $\cA$ and
$$HH_n (\cA)=H^{-n}(C_\bullet (\cA))$$
is the Hochschild homology of $\cA.$

Clearly, a DG functor $F:\cA \to \cB$ between DG categories $\cA$
and $\cB$ induces a morphism of complexes $C(F):C_\bullet (\cA)\to
C_\bullet (\cB)$ and hence a morphism
$$HH(F):HH(\cA)\to HH(\cB).$$ The following fact
is proved in \cite{Ke2}.

\begin{prop} \label{invariance-HH}
Homotopy equivalent DG functors induce the same map on $HH.$
\end{prop}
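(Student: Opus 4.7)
The plan is to deduce the proposition from an explicit chain-level homotopy. Since $HH_n(\cA) = H^{-n}(C_\bullet(\cA))$, it suffices to produce a $k$-linear map $H \colon C_\bullet(\cA) \to C_\bullet(\cB)$ of degree $+1$ satisfying $bH + Hb = C(F) - C(G)$; this immediately forces $C(F)$ and $C(G)$ to agree on cohomology, hence $HH(F) = HH(G)$.

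First I would unpack what a homotopy between DG functors provides. After reducing to the case where $F$ and $G$ agree on objects, a homotopy amounts to a degree $-1$ collection $h_X \in \Hom^{-1}_\cB(F(X), G(X))$, closed with respect to $d_\cB$, such that for every morphism $f \in \Hom^n_\cA(X,Y)$ one has
\[
h_Y \cdot F(f) - (-1)^n G(f) \cdot h_X = G(f) - F(f).
\]
This is the homotopy relation in the DG category $\Fun_{\DG}(\cA,\cB)$. With this in hand, I would set, for $\alpha = a_0[a_1|\cdots|a_n]$ with $a_i \in \Hom_\cA(X_{i-1}, X_i)$ (indices cyclic),
\[
H(\alpha) = \sum_{i=0}^{n} (-1)^{\eta_i} \, G(a_0)[G(a_1)|\cdots|G(a_i)|h_{X_i}|F(a_{i+1})|\cdots|F(a_n)],
\]
where $\eta_i$ is dictated by the Koszul rule of moving $h_{X_i}$ past the preceding factors. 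Intuitively, $H$ inserts the homotopy at each interface between a $G$-applied prefix and an $F$-applied suffix.

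Next I would verify $bH + Hb = C(F) - C(G)$ by direct computation, splitting $b = b_0 + b_1$. The $b_0$-contributions (internal differentials) cancel against those coming from $H$ because $d(h_{X_i}) = 0$, leaving only extreme boundary pieces which telescope to the difference $C(F) - C(G)$. The $b_1$-contributions (bar multiplications) produce, at each interface, one term in which $G(a_i)$ meets $h_{X_i}$ from the left and another in which $h_{X_i}$ meets $F(a_{i+1})$ from the right; these pair up and cancel by the homotopy-naturality relation displayed above. The main obstacle, and the step I expect to be most delicate, is the sign bookkeeping in this cancellation, in particular the compatibility between the Koszul signs $\eta_i$ and the signs built into $b_1$. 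Once the formula for $H$ is calibrated correctly, the argument is the classical one for homotopic morphisms of associative algebras (as in Loday) lifted to the DG-category setting, and the conclusion $HH(F) = HH(G)$ follows by passing to cohomology.
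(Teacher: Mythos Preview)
The paper does not prove this proposition at all; it simply records it as a fact proved in \cite{Ke2}. So there is no ``paper's proof'' to compare against, only Keller's.

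Your attempt, however, contains a genuine error rather than merely a different route. The displayed relation
\[
h_Y\cdot F(f)-(-1)^nG(f)\cdot h_X=G(f)-F(f)
\]
is degree-inconsistent: with $h_X\in\Hom^{-1}_\cB(FX,GX)$ and $f$ of degree $n$, the left side lies in degree $n-1$ while the right side lies in degree $n$. So this identity cannot hold except trivially, and the formula for $H$ built on it cannot produce a homotopy to $C(F)-C(G)$.

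The underlying problem is that what you have written down is not the notion of ``homotopy equivalent DG functors'' used in the paper (and in \cite{Ke2}). Two DG functors $F,G:\cA\to\cB$ are homotopy equivalent when there is a \emph{closed degree~$0$} natural transformation $\alpha:F\to G$ which is objectwise a homotopy equivalence; equivalently, $F\cong G$ in $H^0$ of the DG category of functors. In particular $F$ and $G$ need not agree on objects, so your preliminary ``reduction'' is not available, and the datum you should be inserting into the bar word is $\alpha_{X_i}$ (degree $0$), not a closed degree $-1$ element. With $\alpha$ of degree $0$, inserting it into a slot does not shift degree, so a sum of such insertions is a degree-$0$ map, not a degree-$+1$ homotopy; one then has to use that $\alpha$ has a homotopy inverse (or pass through a path/cylinder object, or through the associated bimodules, as Keller does) to conclude $HH(F)=HH(G)$. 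Your telescoping/insert-at-each-interface idea is the right shape of argument for the \emph{bar} side, but it must be fed the correct input; as written, the key identity fails for degree reasons and the proof does not go through.
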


Given a DG algebra $A$ we denote by $A\text{-mod}$ the DG category of
(right) DG $A$-modules. Let $\cP (A)\subset A\text{-mod}$ be the full DG
subcategory of h-projective DG modules. Then the (triangulated) homotopy category
$Ho(\cP (A))$ is equivalent to the derived category $D(A).$ Let $\Perf A\subset \cP(A)$
be the full DG subcategory of perfect DG modules, and $A^{\text{pre-tr}}\subset \Perf A$
be the pre-triangulated envelop of the DG $A$-module $A.$ Then by definition
$Ho(\Perf A)$ is the Karoubian closure of $Ho(A^{\text{pre-tr}}).$
Here is another result from \cite{Ke2}.

\begin{prop} The natural DG embeddings $A\to A^{\text{pre-tr}}\to \Perf A$ induce
isomorphisms $$HH(A)=HH(A^{\text{pre-tr}})=HH(\Perf A).$$
\end{prop}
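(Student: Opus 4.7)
The plan is to prove both isomorphisms as instances of the general principle that Hochschild homology depends only on the ``Morita class'' of a DG category, so that fully faithful inclusions whose image suitably generates the target produce isomorphisms on $HH$. Proposition \ref{invariance-HH} (invariance under homotopy equivalence of DG functors) will be the key enabling tool, together with the two standard enlargement operations on DG categories.

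First I would establish $HH(A) = HH(A^{\text{pre-tr}})$. Viewing $A$ as the DG category with a single object whose endomorphism DG algebra is $A$, the inclusion $A\hookrightarrow A^{\text{pre-tr}}$ is a fully faithful DG functor whose image generates $A^{\text{pre-tr}}$ via finite iterated cones and shifts (this is the defining property of the pre-triangulated envelope in the sense of Bondal--Kapranov). The induced map $C_\bullet(A)\to C_\bullet(A^{\text{pre-tr}})$ is the natural inclusion of the sub-complex indexed by sequences of copies of the unique object of $A$. To see it is a quasi-isomorphism, one filters $C_\bullet(A^{\text{pre-tr}})$ by ``size'' of the twisted complexes appearing as objects in a tuple and, using the explicit description of morphisms in $A^{\text{pre-tr}}$ as matrices of morphisms in $A$, shows by a spectral-sequence argument that any Hochschild cycle in $A^{\text{pre-tr}}$ is cohomologous to one supported on the one-object subcategory. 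This computation is exactly the content of Keller's invariance theorem in \cite{Ke2}.

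Next I would establish $HH(A^{\text{pre-tr}}) = HH(\Perf A)$. Since $\Ho(\Perf A)$ is the Karoubian closure of $\Ho(A^{\text{pre-tr}})$, the inclusion amounts to formally adjoining direct summands of objects and their morphism spaces. The invariance of Hochschild homology under idempotent completion is the key input: given a homotopy idempotent $e$ on an object $X$ splitting off a summand $Y\in \Perf A$, one builds a chain-level homotopy showing that any Hochschild cycle involving $Y$ can be rewritten in terms of $X$ and the idempotent, hence pushed into $C_\bullet(A^{\text{pre-tr}})$. This again is the argument given in \cite{Ke2}.

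The main obstacle is the first step: constructing an explicit chain-level retraction $C_\bullet(A^{\text{pre-tr}})\to C_\bullet(A)$ that inverts the tautological inclusion up to homotopy. The combinatorics of twisted complexes and of the associated matrix-valued Hochschild cycles is nontrivial, and a direct computation is lengthy. In practice I would cite Keller's theorems from \cite{Ke2} and \cite{Shk} rather than reproduce the spectral-sequence and idempotent-splitting arguments in detail, noting that both steps are formal consequences of a general Morita-invariance statement for $HH$ of small DG categories.
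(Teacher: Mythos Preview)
Your proposal is aligned with the paper: the paper does not prove this proposition at all but simply states it as ``another result from \cite{Ke2}'', and you likewise conclude by deferring to Keller's theorems in \cite{Ke2}. Your sketch of the Morita-invariance argument (pre-triangulated envelope via twisted-complex filtration, then idempotent completion) is a reasonable outline of what lies behind that citation, but the paper itself offers no proof to compare against.
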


\subsection{Kunneth isomorphism} Let $A$ be a DG algebra. Let us
recall the definition of the shuffle product
$$\sh :C_\bullet (A)\otimes C_\bullet (A)\to C_\bullet (A).$$
For $a_0[a_1\vert ...\vert a_n], b_0[b_1\vert ...\vert b_m]\in
C_\bullet (A)$ put
$$\sh (a_0[a_1\vert ...\vert a_n]\otimes b_0[b_1\vert ...\vert
b_m])=(-1)^\heartsuit a_0b_0\sh _{nm}[a_1\vert ...\vert a_n\vert
b_1\vert ...\vert b_m]$$ Here $\heartsuit =\deg (b_0)(\deg
(sa_1)+...+\deg (sa_n))$ and
$$\sh _{nm}[x_1\vert ...\vert x_n\vert x_{n+1}\vert ...\vert
x_{n+m}]=\sum _{\sigma}\pm [x_{\sigma ^{-1}(1)}\vert ...\vert
x_{\sigma ^{-1}(n)}\vert x_{\sigma ^{-1}(n+1)}\vert ...\vert
x_{\sigma ^{-1}(n+m)}]$$ where the sum is taken over all
permutations that don't shuffle the first $n$ and the last $m$
elements and the sign is computed using the usual rule
$xy=(-1)^{\deg (x)+\deg (y)}yx.$

Obviously, the shuffle product is functorial with respect to
morphisms of DG algebras.

If $B$ is another DG algebra, the natural homomorphisms of DG
algebras $A\to A\otimes B,$ $B\to A\otimes B$ induces morphisms of
complexes $C_\bullet(A)\to C_\bullet (A\otimes B),$ $C_\bullet(B)\to
C_\bullet (A\otimes B).$

\begin{theo} The composition $K$ of maps
$$C_\bullet(A)\otimes C_\bullet (B)\to C_\bullet (A\otimes B)\otimes
C_\bullet(A\otimes B) \stackrel{\sh}{\to }C_\bullet (A\otimes B)$$
is a morphism of complexes which is a quasi-isomorphism.
\end{theo}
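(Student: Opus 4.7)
The plan has two parts: first establish that $K$ is a chain map, then that it is a quasi-isomorphism.

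First I would verify that $K$ is a morphism of complexes. The two DG algebra homomorphisms $A\to A\otimes B$, $a\mapsto a\otimes 1$, and $B\to A\otimes B$, $b\mapsto 1\otimes b$, induce chain maps on Hochschild complexes by functoriality, so the issue is entirely whether the composition with $\sh$ intertwines the differentials. Writing $b=b_0+b_1$ as in the paper, compatibility with $b_0$ is automatic since $b_0$ acts as a graded derivation on a shuffle. Compatibility with $b_1$ is the classical shuffle-vs-bar-differential calculation: for a general (noncommutative) DG algebra this identity fails by commutator terms, but the only commutators one actually encounters in the image of $K$ are between elements of $A\otimes 1$ and $1\otimes B$ inside $A\otimes B$, and these vanish (up to the usual signs) because such elements supercommute. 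Thus all obstruction terms cancel and $b\circ\sh = \sh\circ(b\otimes\id+\id\otimes b)$ on the relevant subcomplex.

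For the quasi-isomorphism, my plan is to reduce to the classical Künneth/Eilenberg--Zilber theorem via the bar resolution. Since $k$ is a field, every DG algebra is $k$-flat, and the bar complex yields natural quasi-isomorphisms
$$C_\bullet(A)\simeq A\stackrel{\bL}{\otimes}_{A^e}A,\qquad C_\bullet(B)\simeq B\stackrel{\bL}{\otimes}_{B^e}B,\qquad C_\bullet(A\otimes B)\simeq (A\otimes B)\stackrel{\bL}{\otimes}_{(A\otimes B)^e}(A\otimes B),$$
where $A^e=A\otimes A^{\op}$, etc. Using the canonical identification $(A\otimes B)^e\cong A^e\otimes B^e$ as DG algebras and the Künneth formula for derived tensor products over a field, the last expression is quasi-isomorphic to $(A\stackrel{\bL}{\otimes}_{A^e}A)\otimes(B\stackrel{\bL}{\otimes}_{B^e}B)$. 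The remaining task is to exhibit $K$ as a chain-level representative of this quasi-isomorphism, which is precisely the content of the Eilenberg--Zilber theorem: the shuffle map is naturally quasi-inverse to the Alexander--Whitney map, and the two classical algebra inclusions $A,B\hookrightarrow A\otimes B$ translate on bar resolutions into the tensor product of resolutions.

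The main obstacle, and where I would spend most of the effort, is juggling the internal DG grading together with the bar grading. I would handle this by the standard filtration on each complex by total bar length: $K$ preserves this filtration, and on the $E^0$ page only the internal differential $b_0$ survives, reducing the comparison to the Eilenberg--Zilber shuffle map on the bar complexes of the underlying graded algebras, which is a quasi-isomorphism degreewise. A spectral sequence comparison (the filtrations are bounded below and exhaustive in each internal degree, so convergence holds) then yields the conclusion. The argument follows the pattern used by Shklyarov and Keller for DG algebras; no new ingredient beyond the classical shuffle/Eilenberg--Zilber machinery and the supercommutativity of $A\otimes 1$ with $1\otimes B$ is needed.
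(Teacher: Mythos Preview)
The paper does not actually prove this theorem: it is stated without proof as a known result (the K\"unneth theorem for Hochschild homology of DG algebras), with the surrounding material attributed to \cite{Shk} and \cite{Ke2}. So there is no ``paper's own proof'' to compare against; your proposal is being judged on its own merits.

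Your outline is essentially the standard argument and is correct in substance. The chain-map verification is right: the shuffle product on $C_\bullet(R)$ fails to commute with $b_1$ for a general DG algebra $R$ precisely by commutator terms, and in your situation every such commutator is between an element of $A\otimes 1$ and one of $1\otimes B$, hence vanishes by supercommutativity. The derived-tensor-product reformulation $C_\bullet(A)\simeq A\stackrel{\bL}{\otimes}_{A^e}A$ together with $(A\otimes B)^e\cong A^e\otimes B^e$ and Eilenberg--Zilber for bar resolutions is exactly how the result is proved in the literature.

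One point to tighten: your spectral-sequence step asserts convergence because the bar-length filtration is ``bounded below and exhaustive in each internal degree.'' For an arbitrary DG algebra $A$ with unbounded grading this is not literally true in a fixed total degree, so the comparison theorem for spectral sequences does not apply without further argument. The clean fix is either (i) to note that over a field the Eilenberg--Zilber shuffle map between the bar resolution of $A\otimes B$ and the tensor product of bar resolutions has an explicit chain-homotopy inverse (the Alexander--Whitney map), so one never needs a spectral sequence at all; or (ii) to run the filtration argument on the mapping cone of $K$ and observe that the associated graded is acyclic, which suffices since the filtration is nonnegative and exhaustive (Eilenberg--Moore comparison). Either route closes the gap; the rest of your plan is fine.
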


The Kunneth morphism $K$ which is defined in the previous theorem
for two DG algebras admits a generalization to the case of small DG
categories \cite{Shk},2.4, i.e. for DG categories $\cA, \cB$ we get
a functorial morphism
$$K:C_\bullet (\cA)\otimes C_\bullet (\cB)\to C_\bullet (\cA\otimes
\cB).$$

Let $A$ and $B$ be DG algebras. The obvious DG functor
$$\Perf A\otimes \Perf B\to \Perf (A\otimes B)$$ induces
a morphism of complexes
$$C_\bullet (\Perf A)\otimes C_\bullet (\Perf B)\to C_\bullet
(\Perf (A\otimes B)).$$ We denote the composition
$$C_\bullet (\Perf A)\otimes C_\bullet (\Perf B)\stackrel{K}{\to} C_\bullet
(\Perf A\otimes \Perf B)\to C_\bullet (\Perf (A\otimes B))$$ again
by $K.$

The next four lemmas are taken from
\cite{Shk},Prop.2.9,2.10,2.11,3.6.

\begin{lemma}
This map $K$ is a quasi-isomorphism.
\end{lemma}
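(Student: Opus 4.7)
The plan is to reduce this to the Kunneth quasi-isomorphism for plain DG algebras (the theorem above) by fitting the composite $K$ into a commutative square that compares $\Perf A, \Perf B, \Perf(A\otimes B)$ with their single-object subcategories $A, B, A\otimes B$. The invariance proposition $HH(A)=HH(\Perf A)$ will turn three sides of the square into quasi-isomorphisms; the fourth side (which is what we want) will then have to be a quasi-isomorphism as well.

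In concrete terms, consider the commutative diagram
$$
\begin{array}{ccc}
C_\bullet(A)\otimes C_\bullet(B) & \stackrel{K}{\longrightarrow} & C_\bullet(A\otimes B) \\
\downarrow & & \downarrow \\
C_\bullet(\Perf A)\otimes C_\bullet(\Perf B) & \stackrel{K}{\longrightarrow} & C_\bullet(\Perf(A\otimes B))
\end{array}
$$
where the vertical arrows are induced by the DG embeddings $A\hookrightarrow\Perf A$, $B\hookrightarrow\Perf B$, and $A\otimes B\hookrightarrow\Perf(A\otimes B)$, and the bottom map is the map $K$ whose quasi-isomorphism property is the content of the lemma. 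The top arrow is the Kunneth map for the DG algebras $A$ and $B$, known to be a quasi-isomorphism by the theorem above. The three vertical arrows are quasi-isomorphisms by the invariance proposition, applied separately to $A$, $B$ and $A\otimes B$. By the two-out-of-three property for quasi-isomorphisms, the bottom arrow is then a quasi-isomorphism.

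The only thing to verify is that the square actually commutes, which amounts to checking the functoriality of the Kunneth construction under the two DG functors $A\to\Perf A$ and $B\to\Perf B$, combined with the fact that the natural DG functor $\Perf A\otimes \Perf B\to \Perf(A\otimes B)$ sends the external tensor product $A\boxtimes B$ of the two generating modules to the rank-one free module $A\otimes B$. Both statements are naturality assertions built into the construction of the shuffle product and the Kunneth map for small DG categories recalled just above (and explicitly noted as functorial in the text).

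The main obstacle (really the only substantive one) is spelling out the functoriality of the generalized Kunneth morphism for small DG categories $K:C_\bullet(\cA)\otimes C_\bullet(\cB)\to C_\bullet(\cA\otimes\cB)$ with respect to DG functors in each variable; since the construction is modeled term-by-term on the shuffle product, which is manifestly functorial for DG algebra maps, this extends without trouble to DG functors, and we can refer to \cite{Shk},~2.4 for the details.
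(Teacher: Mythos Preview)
Your proof is correct and is essentially identical to the paper's own proof: the paper writes down the same commutative square comparing $C_\bullet(A)\otimes C_\bullet(B)\to C_\bullet(A\otimes B)$ with $C_\bullet(\Perf A)\otimes C_\bullet(\Perf B)\to C_\bullet(\Perf(A\otimes B))$ and observes that the bottom arrow is a quasi-isomorphism because the other three are. Your additional paragraph justifying commutativity of the square is more than the paper bothers to say, but is in the same spirit.
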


\begin{proof}
Indeed, consider the commutative diagram of complexes
$$\begin{array}{ccc}
C_\bullet (A)\otimes C_\bullet (B) & \longrightarrow & C_\bullet
(A\otimes B)\\
\downarrow & & \downarrow \\
C_\bullet (\Perf A)\otimes C_\bullet (\Perf B) & \longrightarrow &
C_\bullet (\Perf (A\otimes B))
\end{array}
$$
The bottom arrow is a quasi-isomorphism because the other three are.
\end{proof}

\begin{lemma} Let $A,B,C$ be DG algebras. The diagram
$$\begin{array}{ccc}
C_\bullet (\Perf A)\otimes C_\bullet (\Perf B)\otimes C_\bullet
(\Perf C) & \stackrel {K\otimes 1}{\longrightarrow} &
C_\bullet(\Perf (A\otimes B))\otimes C_\bullet (\Perf C)\\
1\otimes K\downarrow & & \downarrow K\\
C_\bullet (\Perf A)\otimes C_\bullet (\Perf (B\otimes C))&
\stackrel{K}{\longrightarrow} & C_\bullet (\Perf (A\otimes B\otimes
C))\end{array}
$$
commutes. That is, the map $K$ is associative.
\end{lemma}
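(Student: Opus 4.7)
The plan is to reduce the associativity of $K$ for perfect modules to the associativity of the underlying Kunneth map on Hochschild chains of DG categories, and then to the associativity of an iterated shuffle product. First I would note that the $K$ in the statement factors as
\[
C_\bullet(\Perf A)\otimes C_\bullet(\Perf B) \xrightarrow{K_{\mathrm{cat}}} C_\bullet(\Perf A\otimes \Perf B)\xrightarrow{C(\mu)} C_\bullet(\Perf(A\otimes B)),
\]
where $K_{\mathrm{cat}}$ is the DG-categorical Kunneth map and $\mu\colon\Perf A\otimes\Perf B\to \Perf(A\otimes B)$ is the canonical DG tensor-product functor. Since the tensor product $\mu$ is strictly associative, naturality of $K_{\mathrm{cat}}$ with respect to DG functors reduces the claim to the commutativity of the analogous square with all occurrences of $\Perf(-)$ replaced by the DG categories themselves. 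Thus the entire problem reduces to the associativity of $K_{\mathrm{cat}}$ for three small DG categories $\cA,\cB,\cC$.

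Next I would unwind the definition of $K_{\mathrm{cat}}$. An element of $C_\bullet(\cA)$ is a Hochschild chain $a_0[a_1|\cdots|a_n]$ along a cyclic sequence of objects in $\cA$; analogously for $\cB$. The Kunneth map sends a pair of such chains to a signed sum over all $(n,m)$-shuffles of the "horizontal" morphisms $a_i\otimes\id$ and the "vertical" morphisms $\id\otimes b_j$ in $\cA\otimes\cB$, with the initial component $a_0\otimes b_0$ sitting at the base object of the resulting cyclic sequence of object-pairs. This is exactly the categorical version of the shuffle product used at the DG-algebra level in the preceding theorem, and it is natural in $\cA$ and $\cB$.

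For three DG categories, both composites in the square send a triple of chains $\alpha\otimes\beta\otimes\gamma$ to a signed sum over \emph{triple shuffles}, i.e.\ interleavings of the three bar sequences of morphisms $\{a_i\otimes\id\otimes\id\}$, $\{\id\otimes b_j\otimes\id\}$, $\{\id\otimes\id\otimes c_k\}$ in $\cA\otimes\cB\otimes\cC$, with $a_0\otimes b_0\otimes c_0$ in the initial slot. The combinatorial identity needed is the classical associativity of the shuffle product: the set of $(n,m,\ell)$-triple shuffles is bijectively obtained either by first $(n,m)$-shuffling and then $(n+m,\ell)$-shuffling, or by first $(m,\ell)$-shuffling and then $(n,m+\ell)$-shuffling, and the Koszul signs agree on both sides because in each interleaving the sign is determined solely by the Koszul rule applied to the relative order of the homogeneous symbols $sa_i,\ sb_j,\ sc_k$. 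This identity, together with the naturality reduction above, gives the claim.

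The main obstacle is purely bookkeeping: keeping the cyclic-first-slot conventions and the $(-1)^{\heartsuit}$-type Koszul signs consistent when comparing the two composites. Once one writes out both sides on a general elementary chain and invokes the associativity of the ordinary shuffle product (with the Koszul sign rule) term by term, the identity falls out and no further input is required.
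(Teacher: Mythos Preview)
Your plan is correct and is exactly the standard argument: factor $K$ as the categorical shuffle map $K_{\mathrm{cat}}$ followed by $C(\mu)$, use strict associativity of the tensor functor $\mu$ together with naturality of $K_{\mathrm{cat}}$ to reduce to DG categories, and then invoke the classical associativity of the shuffle product (with Koszul signs) on bar words. There is no hidden obstacle beyond the bookkeeping you already flagged.

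As for comparison with the paper: the paper does not actually prove this lemma. It is stated among four lemmas that are simply quoted from \cite{Shk} (Propositions~2.9--2.11 and~3.6 there), with no argument given. Your write-up therefore supplies precisely the proof the paper omits, and it matches the approach one finds in \cite{Shk}.
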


Let $A,B,C,D$ be DG algebras. Let $X\in A ^{\op}\otimes
C\text{-mod}$ and $Y\in B^{\op}\otimes D\text{-mod}$ be bimodules
which define functors
$$\Phi _X=-\otimes _AX:\Perf A\to \Perf C,\quad
\Phi _Y=-\otimes _BY:\Perf B \to \Perf D.$$

\begin{lemma} The following diagram
$$\begin{array}{lcr}
C_\bullet (\Perf A)\otimes C_\bullet (\Perf B) &
\stackrel{K}{\longrightarrow} & C_\bullet (\Perf (A\otimes B))\\
C(\Phi _X)\otimes C(\Phi _Y)\downarrow & & \downarrow C(\Phi _{X\otimes _kY})\\
C_\bullet (\Perf C)\otimes C_\bullet (\Perf D) &
\stackrel{K}{\longrightarrow} & C_\bullet (\Perf (C\otimes D)
\end{array}
$$
commutes.
\end{lemma}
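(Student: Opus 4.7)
The plan is to factor the displayed square through the external tensor product DG functor and then invoke the naturality of the already-defined categorical Kunneth morphism. Recall that the map $K$ appearing in the lemma was defined as the composition
\[
C_\bullet(\Perf A)\otimes C_\bullet(\Perf B)\xrightarrow{K_{\mathrm{cat}}}C_\bullet(\Perf A\otimes \Perf B)\xrightarrow{C(T_{A,B})}C_\bullet(\Perf(A\otimes B)),
\]
where $K_{\mathrm{cat}}$ is the Kunneth map for small DG categories and $T_{A,B}:\Perf A\otimes \Perf B\to \Perf(A\otimes B)$ is the external tensor product DG functor $(M,N)\mapsto M\otimes_k N$; likewise at the target level with $T_{C,D}$.

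First I would verify that the square of DG functors
\[
\begin{array}{ccc}
\Perf A\otimes \Perf B & \xrightarrow{\;T_{A,B}\;} & \Perf(A\otimes B)\\
\Phi_X\otimes \Phi_Y \downarrow & & \downarrow \Phi_{X\otimes_k Y}\\
\Perf C\otimes \Perf D & \xrightarrow{\;T_{C,D}\;} & \Perf(C\otimes D)
\end{array}
\]
is commutative, at least up to a natural DG isomorphism. This reduces to the canonical identification
\[
(M\otimes_k N)\otimes_{A\otimes B}(X\otimes_k Y)\;\cong\;(M\otimes_A X)\otimes_k(N\otimes_B Y),
\]
obtained by shuffling the tensor factors; it is a strict isomorphism of DG $(C\otimes D)$-modules once one fixes signs by the universal sign rule. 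Applying $C_\bullet$ and Proposition \ref{invariance-HH} (to absorb any homotopy that might arise from sign discrepancies) yields a commutative square in Hochschild complexes built from $T$ and the bimodule functors.

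Next I would paste this with the naturality of $K_{\mathrm{cat}}$ with respect to pairs of DG functors. Since a DG functor $F:\cA\to \cA'$ induces a morphism $C(F)$ compatible with the shuffle product and the category-level Kunneth assembly map, the pair $(\Phi_X,\Phi_Y)$ gives a commutative diagram
\[
\begin{array}{ccc}
C_\bullet(\Perf A)\otimes C_\bullet(\Perf B) & \xrightarrow{K_{\mathrm{cat}}} & C_\bullet(\Perf A\otimes \Perf B)\\
C(\Phi_X)\otimes C(\Phi_Y)\downarrow & & \downarrow C(\Phi_X\otimes \Phi_Y)\\
C_\bullet(\Perf C)\otimes C_\bullet(\Perf D) & \xrightarrow{K_{\mathrm{cat}}} & C_\bullet(\Perf C\otimes \Perf D).
\end{array}
\]
Stacking this on top of the square obtained by applying $C_\bullet$ to the DG-functor square above, and using the definition $K=C(T)\circ K_{\mathrm{cat}}$ on both sides, gives exactly the commutative rectangle asserted in the lemma.

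The main technical obstacle is step one: keeping track of Koszul signs when identifying $(M\otimes_k N)\otimes_{A\otimes B}(X\otimes_k Y)$ with $(M\otimes_A X)\otimes_k(N\otimes_B Y)$ and in verifying that the induced natural transformation of DG functors is either strict or only off by an explicit homotopy. All the higher-level categorical structure (functoriality of $C_\bullet$, naturality of $K_{\mathrm{cat}}$, and the homotopy-invariance result of Keller) is already available, so once this sign-compatible isomorphism is nailed down the rest is a formal diagram-pasting argument.
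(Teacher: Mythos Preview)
The paper does not supply its own proof of this lemma: it is one of the four results introduced with ``The next four lemmas are taken from \cite{Shk}, Prop.~2.9, 2.10, 2.11, 3.6,'' and only the first of those four receives an argument in the text. Your approach---factoring $K$ as $C(T_{A,B})\circ K_{\mathrm{cat}}$, verifying the square of DG functors commutes via the canonical isomorphism $(M\otimes_k N)\otimes_{A\otimes B}(X\otimes_k Y)\cong(M\otimes_A X)\otimes_k(N\otimes_B Y)$, and then invoking naturality of $K_{\mathrm{cat}}$ in each factor---is correct and is exactly the strategy in Shklyarov's preprint to which the paper defers. Your caution about signs and about obtaining only homotopy-commutativity is well placed but harmless here: every application of this lemma in the paper occurs after passing to Hochschild homology, so a square commuting up to chain homotopy is all that is required.
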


\begin{lemma} For any DG algebra $A$ the formula
$$(a_0[a_1\vert ...\vert a_n])^\clubsuit =(-1)^{n+\sum_{1\leq i<j\leq
n}\deg (sa_i)\deg (sa_j)}a_0[a_n\vert a_{n-1}\vert ...\vert
a_1]$$ defines a quasi-isomorphism ${}^\clubsuit :C_\bullet (A)\to
C_\bullet (A^{\op}).$ A similar formula defines a quasi-isomorphism
${}^\clubsuit :C_\bullet (\cA)\to C_\bullet (\cA ^{\op})$ for any
small DG category $\cA.$ Clearly this quasi-isomorphism is preserved
by DG functors, i.e. given a DG functor $F:\cA \to \cB$ we have
${}^\clubsuit \circ C(F) =C(F^{\op})\circ {}^\clubsuit.$ So we obtain
a functorial isomorphism
$${}^\clubsuit :HH (\cA )\stackrel{\sim}{\longrightarrow}
HH(\cA ^{\op}).$$
\end{lemma}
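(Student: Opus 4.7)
The proof proceeds in three stages: first I would show ${}^\clubsuit$ is a morphism of complexes, then observe it is automatically an isomorphism (hence a quasi-isomorphism), and finally check naturality and extend to small DG categories.

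For the chain map property, write $b = b_0 + b_1$ and treat the two summands separately. For $b_0$: since $A$ and $A^{\op}$ share the same underlying differential $d$, verifying $b_0 \circ {}^\clubsuit = {}^\clubsuit \circ b_0$ reduces to tracking the signs produced by applying $d$ to each slot $a_i$; reversing the order of the tensor factors permutes which $\deg(sa_j)$ appear in the Koszul sign $(-1)^{\eta_{i-1}}$ before the term with $da_i$, and this difference must match the change in the exponent $\sum_{i<j}\deg(sa_i)\deg(sa_j)$ when the letter $sa_i$ is differentiated (its degree shifts by $+1$). This is a routine sign computation, essentially a Koszul sign check. For $b_1$: this is the main calculation. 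The opposite multiplication carries a sign $a \cdot_{\op} b = (-1)^{\deg a \deg b} ba$, and one must see that the resulting signs on each of the $n+1$ terms of $b_1$ (interior multiplications $a_i a_{i+1}$, plus the cyclic term $\pm a_n a_0$) in $C_\bullet(A^{\op})$ applied to $(a_0[a_1|\dots|a_n])^\clubsuit$ match exactly the signs of the corresponding terms of ${}^\clubsuit$ applied to $b_1(a_0[a_1|\dots|a_n])$, after the index $i$ is reflected to $n+1-i$. This is the main obstacle and is by now a classical Koszul bookkeeping exercise (cf.\ Loday's treatment of cyclic homology), but the numerology must be checked by hand.

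Next I would observe that ${}^\clubsuit$ is an involution in the sense that applying it twice — via $C_\bullet(A) \to C_\bullet(A^{\op}) \to C_\bullet(A^{\op\op}) = C_\bullet(A)$ — recovers the identity. Indeed, the prefactor $(-1)^n$ squares to $1$, and the change of variables $i' = n+1-j$, $j' = n+1-i$ shows
\[
\sum_{1\leq i<j\leq n}\deg(sa_{n+1-i})\deg(sa_{n+1-j}) = \sum_{1\leq i'<j'\leq n}\deg(sa_{i'})\deg(sa_{j'}),
\]
so the two sign exponents add to an even number. Thus ${}^\clubsuit$ is a bijection of graded $k$-modules; once it is known to be a chain map, it is automatically an isomorphism of complexes, and a fortiori a quasi-isomorphism. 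This sidesteps any need for a spectral sequence or filtration argument.

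For the extension to a small DG category $\cA$, the same formula makes sense provided one reads the indices correctly: a generator $f_0 \otimes sf_1 \otimes \cdots \otimes sf_n$ with $f_0 \in \Hom_\cA(X_n, X_0)$ and $f_i \in \Hom_\cA(X_{i-1}, X_i)$ maps to $f_0 \otimes sf_n \otimes \cdots \otimes sf_1$, viewed as a chain in $C_\bullet(\cA^{\op})$ attached to the reversed object sequence $\{X_0, X_n, X_{n-1}, \ldots, X_1\}$ (in $\cA^{\op}$ the arrow $f_i$ now goes from $X_i$ back to $X_{i-1}$, so the cyclic adjacency condition is preserved). The verification that this is a chain map is identical to the algebra case, composition by composition. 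Finally, naturality with respect to a DG functor $F:\cA \to \cB$ is immediate from the formula: $C(F)$ acts on generators by applying $F$ to each $f_i$ and does not touch the order or the sign, so ${}^\clubsuit \circ C(F) = C(F^{\op}) \circ {}^\clubsuit$ holds term by term. Passing to $HH$ gives the claimed functorial isomorphism ${}^\clubsuit : HH(\cA) \xrightarrow{\sim} HH(\cA^{\op})$.
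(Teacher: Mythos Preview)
Your outline is correct, but note that the paper does not actually prove this lemma: it is one of four results quoted verbatim from Shklyarov's preprint \cite{Shk} (specifically Prop.~3.6 there), with no argument supplied. So there is no ``paper's own proof'' to compare against.

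That said, your sketch is exactly how one proves this directly, and your observation that ${}^\clubsuit$ is in fact an \emph{isomorphism} of complexes (being an involution up to the canonical identification $A^{\op\op}=A$) is the right way to see the quasi-isomorphism claim without further work. The only caveat is that the sign verification for $b_1$---matching the opposite-multiplication sign $(-1)^{\deg a\deg b}$ against the change in the exponent $n+\sum_{i<j}\deg(sa_i)\deg(sa_j)$ under each contraction and the cyclic term---is genuinely tedious and is where errors creep in; you correctly flag it as the main obstacle but do not carry it out. If you were writing this up in full you would need to display at least the cyclic term explicitly, since that is where the interaction between the global sign $(-1)^n$ and the Koszul sign of the permutation is least transparent.
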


\subsection{Euler class} Let $A$ be a DG algebra.
Recall the definition of the Euler class map
$\Perf A\to HH_0(A).$ Given $N\in \Perf A$ we consider the corresponding
functor $\Phi _N=-\otimes _kN:\Perf k\to \Perf A$ and define
$$Eu(N):=HH(\Phi _N)(1)\in HH_0(\Perf A)=HH_0(A).$$

Thus if $B$ is another DG algebra and $F:\Perf A\to \Perf B$ is a
DG functor then by definition
$HH(F)(Eu(N))=Eu(F(N)).$

The next two lemmas are Prop.3.1,3.2 in \cite{Shk}.

\begin{lemma} If $N,M\in \Perf A$ are homotopy equivalent then
$Eu(N)=Eu(M).$
\end{lemma}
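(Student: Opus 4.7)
The plan is to reduce the statement to Proposition \ref{invariance-HH} by showing that a homotopy equivalence $N \simeq M$ in $\Perf A$ induces a homotopy equivalence of DG functors $\Phi_N \simeq \Phi_M : \Perf k \to \Perf A$. Once this is established, Proposition \ref{invariance-HH} gives $HH(\Phi_N) = HH(\Phi_M)$, and evaluating both sides on $1 \in HH_0(k) = k$ yields $Eu(N) = HH(\Phi_N)(1) = HH(\Phi_M)(1) = Eu(M)$.

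To implement this I would first check the elementary bookkeeping that any graded $A$-linear map $h : N \to M$ of degree $d$ induces a degree-$d$ natural transformation $\Phi_h : \Phi_N \Rightarrow \Phi_M$ with components $(\Phi_h)_V = \id_V \otimes_k h$, and that the assignment $h \mapsto \Phi_h$ is additive and compatible with composition. A closed degree-$0$ morphism $f : N \to M$ then gives a closed natural transformation $\Phi_f$, while a degree $(-1)$ element $h \in \Hom_A(N,M)$ satisfying $dh + hd = g - f$ produces a natural transformation $\Phi_h$ witnessing the homotopy $\Phi_f \sim \Phi_g$ between DG functors. Applying this to mutually homotopy inverse closed morphisms $f : N \to M$, $g : M \to N$ with $gf \sim \id_N$ and $fg \sim \id_M$ yields $\Phi_g \circ \Phi_f \sim \id_{\Phi_N}$ and $\Phi_f \circ \Phi_g \sim \id_{\Phi_M}$, which is exactly the homotopy equivalence of DG functors needed.

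I do not anticipate any substantive obstacle: the argument is a naturality check packaged with an invocation of an already-quoted invariance result. The only care needed is in verifying that $V \mapsto \id_V \otimes_k h$ really is a natural transformation of DG functors (with correct signs and compatibility with the total differential on $V \otimes_k N$), but because the tensor product is taken over the ground field $k$ the $A$-action on $N$ and $M$ is untouched, and the verification reduces to the $A$-linearity of $h$ and the Leibniz rule for the differential on $V \otimes_k N$.
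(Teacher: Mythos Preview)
Your argument is correct. The paper does not supply its own proof of this lemma; it simply records it as Proposition~3.1 in \cite{Shk}. Your reduction to Proposition~\ref{invariance-HH}, via the observation that $h\mapsto (\id_{(-)}\otimes_k h)$ carries a homotopy equivalence $N\simeq M$ in $\Perf A$ to a homotopy equivalence of DG functors $\Phi_N\simeq \Phi_M:\Perf k\to \Perf A$, is the natural route and is what one expects the cited reference to do as well.
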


\begin{lemma} For any exact triangle $L\to M\to N\to L[1]$ in
$Ho(\Perf A)$ we have $$Eu(M)=Eu(L)+Eu(N).$$ In particular
$Eu(N[1])=-Eu(N).$
\end{lemma}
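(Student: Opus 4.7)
The plan is to reduce the additivity statement to a chain-level identity for the identity endomorphism of a mapping cone, and then to conclude via the cyclic (trace) property of $HH_0$. By the preceding lemma on homotopy invariance of $Eu$, I may assume that the exact triangle is represented by h-projective DG $A$-modules $L, M$, a closed degree zero morphism $f : L \to M$, and $N = C(f)$, the mapping cone. As a graded $A$-module, $C(f) = L[1] \oplus M$ with a twisted differential mixing $d_L$, $d_M$ and $f$, and in $\End_{\Perf A}(C(f))$ one has the orthogonal decomposition $\id_{C(f)} = i_L p_L + i_M p_M$, where $i_M$ and $p_L$ are closed morphisms while $i_L$ and $p_M$ are not: they satisfy $d(i_L) = i_M \circ \tilde f$ and $d(p_M) = -\tilde f \circ p_L$, with $\tilde f : L[1] \to M$ the degree one morphism induced from $f$ by the shift.

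The crucial chain-level identity to establish in $HH_0(\Perf A)$ is
$$[\id_{C(f)}] = [\id_{L[1]}] + [\id_M].$$
The cleanest route is to invoke the pre-triangulated envelope: $C(f)$ is a canonical twisted complex over the full DG subcategory $\{L, M\} \subset \Perf A$, and under the isomorphism $HH(A) = HH(A^{\operatorname{pre-tr}}) = HH(\Perf A)$ from the earlier proposition, the Hochschild class of the identity of a twisted complex decomposes into the sum of the identity classes of its rows. Alternatively, one may carry out a direct bar-complex computation: apply $b = b_0 + b_1$ to the 1-chain $[p_L \vert i_L] + [p_M \vert i_M]$. Via the cyclic formula $b_1([a \vert b]) = \pm[ab] \mp [ba]$, the $b_1$-part yields exactly $[\id_{L[1]}] + [\id_M] - [\id_{C(f)}]$, while the two $b_0$-corrections $-[p_L \vert i_M \tilde f]$ and $-[\tilde f p_L \vert i_M]$ arising from $d(i_L)$ and $d(p_M)$ are $b_1$-negatives of one another (both collapse to $\pm[i_M \tilde f p_L]$ after composition) and are killed by the boundary of the 2-chain $\pm[p_L \vert i_M \vert \tilde f]$.

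Granting the chain-level identity, the lemma follows from $[\id_{L[1]}] = -[\id_L]$, which is the cyclic relation $[\alpha \beta] = (-1)^{|\alpha||\beta|}[\beta \alpha]$ applied to the closed shift isomorphism $s : L \to L[1]$ of degree $-1$ and its inverse $s^{-1}$ of degree $+1$, giving $[\id_{L[1]}] = [s\, s^{-1}] = -[s^{-1} s] = -[\id_L]$. Combining yields $Eu(C(f)) = Eu(M) - Eu(L)$, i.e. $Eu(M) = Eu(L) + Eu(N)$ as claimed. The ``in particular'' clause $Eu(N[1]) = -Eu(N)$ then follows either by applying the main additivity to the triangle $N \to 0 \to N[1] \to N[1]$ (using $Eu(0) = [\id_0] = 0$) or directly from the same shift calculation above.

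The main obstacle is the cancellation of the $b_0$-correction terms in the direct bar-complex approach, which is where the twisted differential of the cone interacts nontrivially with the Hochschild boundary: this is delicate sign-and-boundary bookkeeping. Passing to the pre-triangulated envelope is precisely the device that sidesteps this bookkeeping, at the cost of invoking the earlier identification $HH(A) = HH(A^{\operatorname{pre-tr}}) = HH(\Perf A)$.
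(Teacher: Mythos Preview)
Your argument is correct. The paper does not actually prove this lemma: it is merely quoted, together with the preceding one, as Propositions~3.1 and~3.2 of \cite{Shk}, so there is no in-paper proof to compare against. What you have supplied is the standard explicit verification, and the details check out. In the paper's bar notation your 1-chain is $p_L[i_L]+p_M[i_M]$; one computes $b_1$ of this to be $\id_{L[1]}+\id_M-\id_{C(f)}$, while $b_0$ produces $-p_L[i_M\tilde f]-\tilde f p_L[i_M]$, and the latter is precisely $b\bigl(p_L[i_M\vert\tilde f]\bigr)$ since $p_L,i_M,\tilde f$ are all closed. The shift computation $[\id_{L[1]}]=-[\id_L]$ via the closed degree~$-1$ map $s:L\to L[1]$ is likewise correct: $b\bigl(s[s^{-1}]\bigr)=-\id_{L[1]}-\id_L$.

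One small phrasing issue: the two $b_0$-correction terms are not ``$b_1$-negatives of one another'' in any precise sense (their $b_1$-images are $-i_M\tilde f p_L$ and $+i_M\tilde f p_L$, which cancel rather than coincide); what matters, and what you correctly state next, is that their \emph{sum} is the full boundary $b$ of the 2-chain $p_L[i_M\vert\tilde f]$. The alternative route you mention through $HH(A^{\text{pre-tr}})$ is also legitimate and is indeed the conceptual reason the computation succeeds.
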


\begin{cor} The map $Eu$ descends to a group homomorphism
$$Eu :K_0(Ho(\Perf A))\to HH_0(A)$$
\end{cor}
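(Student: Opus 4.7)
The plan is to assemble the corollary directly from the two immediately preceding lemmas, which together verify the two defining relations for $K_0$ of a triangulated category.

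Recall that $K_0(\Ho(\Perf A))$ is by definition the free abelian group on isomorphism classes of objects of $\Ho(\Perf A)$, modulo the relations $[M]=[L]+[N]$ coming from distinguished triangles $L\to M\to N\to L[1]$. So to show that the assignment $N\mapsto Eu(N)\in HH_0(A)$ factors through $K_0(\Ho(\Perf A))$, I need to check precisely two things: (i) $Eu$ is well-defined on isomorphism classes in $\Ho(\Perf A)$, and (ii) $Eu$ respects the triangle relation.

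For (i), two objects $N,M\in \Perf A$ are isomorphic in the homotopy category $\Ho(\Perf A)$ exactly when they are homotopy equivalent as DG $A$-modules. The first of the two preceding lemmas states that homotopy equivalent objects have $Eu(N)=Eu(M)$, so $Eu$ descends to a well-defined set map from isomorphism classes in $\Ho(\Perf A)$ to $HH_0(A)$, and hence by freeness to a group homomorphism from the free abelian group on such classes. For (ii), the second of the two preceding lemmas gives directly $Eu(M)=Eu(L)+Eu(N)$ for any distinguished triangle $L\to M\to N\to L[1]$ in $\Ho(\Perf A)$, which is exactly the relation defining $K_0$. Therefore the map $Eu$ annihilates all defining relations and descends to a group homomorphism $K_0(\Ho(\Perf A))\to HH_0(A)$.

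There is no real obstacle here — all the substance is in the two lemmas (homotopy invariance and additivity over triangles). The corollary is a formal consequence of the universal property of $K_0$ of a triangulated category, and the proof writes itself once those lemmas are in hand.
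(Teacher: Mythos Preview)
Your argument is correct and matches the paper's approach: the corollary is stated without proof in the paper, being an immediate formal consequence of the two preceding lemmas exactly as you describe. There is nothing to add.
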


\begin{cor} \label{Eu-k-mod}
Let $N\in \Perf k.$ Then $\Eu (N)=\sum _i(-1)^i\dim H^i(N)\in
k=HH_0(k)=HH_\bullet (k).$
\end{cor}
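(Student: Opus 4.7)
The plan is to reduce the general case to the case $N=k$ by exploiting the three formal properties of $Eu$ already established in the excerpt: invariance under homotopy equivalence, additivity on exact triangles, and the shift relation $Eu(N[1])=-Eu(N)$.

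First I would dispose of the base case. By definition, $Eu(k) = HH(\Phi_k)(1)$, where $\Phi_k = -\otimes_k k \colon \Perf k \to \Perf k$ is the identity DG functor. The induced map $HH(\Phi_k)$ on $HH_0(k)=k$ is therefore the identity, so $Eu(k)=1$.

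Next, since $k$ is a field, every DG $k$-module is h-projective, and any perfect DG $k$-module $N$ is bounded with finite-dimensional cohomology; hence $N$ is quasi-isomorphic (equivalently, homotopy equivalent in $\Perf k$) to the finite direct sum $\bigoplus_i H^i(N)[-i]$. The first of the preceding two lemmas gives $Eu(N) = Eu\bigl(\bigoplus_i H^i(N)[-i]\bigr)$. For each fixed $i$, choose a $k$-basis of $H^i(N)$: this exhibits $H^i(N)$ as $k^{\dim H^i(N)}$, and the evident split exact triangles $k\to k^{n}\to k^{n-1}\to k[1]$, together with the additivity lemma, yield $Eu(H^i(N)) = \dim H^i(N)\cdot Eu(k) = \dim H^i(N)$. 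The shift rule $Eu(M[1])=-Eu(M)$ then gives $Eu(H^i(N)[-i]) = (-1)^i \dim H^i(N)$.

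Finally, iterating additivity across the finite direct sum (each summand fits into a split triangle with the partial sum of the others) gives
\[
Eu(N) \;=\; \sum_i (-1)^i \dim H^i(N),
\]
which is the claimed equality in $HH_0(k)=k$. There is essentially no obstacle here: the entire argument is a bookkeeping exercise that invokes the previous three lemmas in the correct order, and no direct computation in the Hochschild complex is needed beyond the tautological identification $Eu(k)=1$.
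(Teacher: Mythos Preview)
Your argument is correct and is precisely the intended one: the paper states this result as an immediate corollary of the two preceding lemmas (homotopy invariance and additivity/shift of $Eu$), with no further proof given, and your write-up simply spells out that deduction together with the tautological computation $Eu(k)=1$.
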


\subsection{Pairing on $HH$}
Let $A$ be a DG algebra.
Consider $A$ as a left DG
$A$-bimodule, i.e. as a DG $A\otimes A^{\op}$-module via
$$(a\otimes b)c=(-1)^{\deg (b)\deg (c)}acb.$$ We denote by
$\Delta $ this left DG $A$-bimodule.

\begin{defi} \label{def-pairing}
Consider the DG functor $\Phi _{\Delta }=-\otimes _{A\otimes
A^{\op}}A:\Perf (A\otimes A^{\op})\to \Perf (k).$
The composition of maps
$$C_\bullet (\Perf A)\otimes C_\bullet (\Perf
A^{\op})\stackrel{K}{\longrightarrow} C_\bullet (\Perf (A\otimes
A^{\op}))\stackrel{C(\Phi _{\Delta })}{\longrightarrow} C_\bullet
(\Perf k)$$ defines the pairing
$$\langle ,\rangle =\langle ,\rangle _A :
HH(\Perf A)\otimes HH(\Perf A^{\op})\to k.$$ Using the canonical
isomorphism $HH (A)=HH(\Perf A)$
we also get the pairing
$$\langle ,\rangle :HH (A)\otimes HH (A^{\op})\to k.$$
\end{defi}

We can apply the previous construction to the DG algebra $A^{\op}$
instead of $A$ to get the pairing
$$\langle ,\rangle =\langle ,\rangle _{A^{\op}}:HH(A^{\op})\otimes HH(A)\to k.$$

\begin{lemma} \label{compare-pairings}
For $x\in HH (A),y\in HH (A^{\op})$
we have
$$\langle x,y\rangle _A =
(-1)^{\deg (x)\deg (y)}\langle y,x\rangle _{A^{\op}} .$$
\end{lemma}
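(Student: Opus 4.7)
The plan is to reduce the two pairings to a single computation via the DG algebra swap isomorphism
$$\sigma\colon A\otimes A^{\op}\xrightarrow{\sim}A^{\op}\otimes A,\qquad a\otimes b\mapsto (-1)^{\deg(a)\deg(b)}b\otimes a,$$
which identifies the two ambient algebras appearing in the definitions of $\langle\,,\,\rangle_A$ and $\langle\,,\,\rangle_{A^{\op}}$. Writing $\Delta^{\op}$ for the diagonal bimodule used in the definition of $\langle\,,\,\rangle_{A^{\op}}$, I will establish (i) that $\Phi_{\Delta}$ agrees with the restriction along $\sigma$ of $\Phi_{\Delta^{\op}}$, and (ii) that the K\"unneth map $K$ is graded commutative with respect to $\sigma$, producing exactly the Koszul sign $(-1)^{\deg(x)\deg(y)}$.

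For step (i), a routine check with the universal sign rule confirms that $\sigma$ is a DG algebra isomorphism, so it induces a DG equivalence $\Sigma\colon \Perf(A\otimes A^{\op})\xrightarrow{\sim}\Perf(A^{\op}\otimes A)$ by restriction of scalars along $\sigma^{-1}$. A direct inspection of the two bimodule structures then shows that $\Sigma$ carries the diagonal $A$-bimodule $\Delta$ to the diagonal $A^{\op}$-bimodule $\Delta^{\op}$: both are the underlying space $A$ with tautological left and right multiplication, and the sign conventions in the two formulas are precisely intertwined by $\sigma$. Consequently $\Phi_{\Delta^{\op}}\circ \Sigma\simeq \Phi_{\Delta}$ as DG functors $\Perf(A\otimes A^{\op})\to \Perf k$, and applying $HH$ (together with Proposition \ref{invariance-HH}) gives $HH(\Phi_{\Delta})=HH(\Phi_{\Delta^{\op}})\circ \sigma_{*}$, where $\sigma_{*}:=HH(\Sigma)$.

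For step (ii), I would verify the identity
$$\sigma_{*}\bigl(K(x\otimes y)\bigr)=(-1)^{\deg(x)\deg(y)}\,K(y\otimes x)$$
in $C_{\bullet}(A^{\op}\otimes A)$. This is the Koszul symmetry of the shuffle product under the swap of tensor factors: reordering the two arguments of $K$ before shuffling introduces exactly the expected Koszul sign, while the algebra swap $\sigma$ absorbs the internal sign difference between $A\otimes A^{\op}$ and $A^{\op}\otimes A$. Granting this identity, I combine (i) and (ii) to obtain
$$\langle x,y\rangle_A = HH(\Phi_{\Delta})\bigl(K(x\otimes y)\bigr) = HH(\Phi_{\Delta^{\op}})\bigl(\sigma_{*}K(x\otimes y)\bigr) = (-1)^{\deg(x)\deg(y)}\,\langle y,x\rangle_{A^{\op}}.$$

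The hard part will be the explicit sign verification in step (ii): everything else is a formal manipulation using the naturality of $K$ and the functoriality of $HH$, but tracking the signs through the defining formula of the shuffle map $\sh$ requires careful bookkeeping and is the only genuinely calculational step, where one must be most vigilant with the universal sign rule.
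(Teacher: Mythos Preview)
Your proposal is correct and follows essentially the same route as the paper: the paper also introduces the swap isomorphism $\sigma:A\otimes A^{\op}\to A^{\op}\otimes A$, observes that it interchanges $\Delta$ and $\Delta^{\op}$ (your step (i)), and then asserts that $K^{-1}\cdot HH(\sigma)\cdot K$ sends $x\otimes y$ to $(-1)^{\deg(x)\deg(y)}y\otimes x$ (equivalent to your step (ii)). Your identification of the shuffle-product sign check as the only genuinely calculational step matches the paper, which likewise states this symmetry without writing out the computation.
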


\begin{proof} Denote by $\Delta ^{\op}$ the left
DG $A^{\op}\otimes A$-module $A$ via the action
$$(a\otimes b)c=(-1)^{\deg (a)(\deg (b) +\deg (c))}bca.$$
Then by definition the pairing $\langle ,\rangle _{A^{\op}}$
is defined by the composition of maps
$$C_\bullet (\Perf A^{\op})\otimes C_\bullet (\Perf
A)\stackrel{K}{\longrightarrow} C_\bullet (\Perf (A ^{\op}\otimes
A))\stackrel{C(\Phi _{\Delta ^{\op} })}{\longrightarrow} C_\bullet
(\Perf k)$$

Note that the isomorphism of DG algebras
$$\sigma :A\otimes A^{\op} \to A^{\op}\otimes A,\quad \sigma (a\otimes b)=
(-1)^{\deg (a)\deg (b)}b\otimes a$$
interchanges the left DG modules
$\Delta $ and $\Delta ^{\op}.$ Thus we obtain the induced commutative
diagram of DG functors
$$\begin{array}{ccccc}
 A \otimes  A^{\op} & \longrightarrow & \Perf (A\otimes A^{\op}) &
\stackrel{\Phi _{\Delta }}{\longrightarrow } & \Perf k\\
\sigma \downarrow & & \sigma \downarrow & & \vert \vert \\
 A ^{\op} \otimes  A & \longrightarrow &
\Perf (A^{\op}\otimes A)  &
\stackrel{\Phi _{\Delta ^{\op} }}{\longrightarrow } & \Perf k
\end{array}
$$
Consider the diagram
$$\begin{array}{ccc}
C_\bullet (A)\otimes C_\bullet (A^{\op}) & \stackrel{K}{\longrightarrow} &
C_\bullet (A\otimes A^{\op})\\
 & & \downarrow C_\bullet (\sigma )\\
C_\bullet (A^{\op})\otimes C_\bullet (A) & \stackrel{K}{\longrightarrow} &
C_\bullet (A^{\op}\otimes A)
\end{array}
$$
It remains to notice that the induced isomorphism
$$K^{-1}\cdot HH(\sigma )\cdot K :HH (A)\otimes HH ( A^{\op})\to
HH ( A^{\op})\otimes HH ( A)$$
maps $x\otimes y$ to $(-1)^{\deg (x)\deg (y)}y\otimes x.$
This implies the lemma.
\end{proof}

\begin{defi} \label{def-HH(M)}
Recall that for $M\in \Perf (A\otimes A^{\op})$ (resp.
$M\in \Perf (A^{\op}\otimes A)$) the group
$HH_i(M):=H^{-i}(\Phi _{\Delta}(M))$ (resp.
$HH_i(M):=H^{-i}(\Phi _{\Delta ^{\op}}(M))$)
is called the $i$-th Hochschild
homology group of $M.$
\end{defi}

\subsection{Smooth and proper DG algebras}\label{smooth-proper-algebras}
 Recall that a DG algebra $A$ is {\it smooth} if it is perfect as a DG
$A^{\op}\otimes A$-module. It is called {\it proper} if its total
cohomology $H(A)$ is finite dimensional.

\begin{lemma} Let $A$ and $B$ be smooth and proper DG algebras.

(a) The DG algebras $A^{\op}$ and $A\otimes B$ are also smooth and
proper.

(b) A DG $A$-module $N$ is perfect if and only if its total
cohomology $H(N)$ is finite dimensional.

(c) Any DG module $L\in \Perf (A^{\op}\otimes B)$ defines the functor
$$\Phi _L:\Perf A\to \Perf B.$$

(d) The total Hochschild homology $HH(A)$ is finite
dimensional.

(e) For $M\in \Perf (A^{\op}\otimes A)$ the total Hochschild
homology $HH(M)$ is finite dimensional.
\end{lemma}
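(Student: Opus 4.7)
The plan is to establish (a)--(e) in order, with (b) as the structural backbone and (c)--(e) reducing quickly to (a), (b), and the bimodule formalism of the previous subsections.

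Part (a) is essentially formal. Properness is immediate: $H(A^{\op})=H(A)$, and the K\"unneth formula gives $H(A\otimes B)\cong H(A)\otimes H(B)$, so both are finite-dimensional. For smoothness of $A^{\op}$, the swap isomorphism $\sigma: A\otimes A^{\op}\stackrel{\sim}{\to} A^{\op}\otimes A$ already used in Lemma~\ref{compare-pairings} identifies the diagonal $A$-bimodule with the diagonal $A^{\op}$-bimodule, transferring perfectness. For smoothness of $A\otimes B$, the identification $(A\otimes B)^{\op}\otimes(A\otimes B)\cong (A^{\op}\otimes A)\otimes(B^{\op}\otimes B)$ sends the diagonal of $A\otimes B$ to the external tensor product of the diagonals of $A$ and $B$; the external tensor product of perfect modules is perfect since it sits in the thick closure of the external tensor product of generators.

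Part (b) is the crux. The direction ``perfect $\Rightarrow$ finite-dim cohomology'' is automatic: $\Perf A$ is the thick closure of $A$ in $D(A)$, and $H(A)$ is finite-dimensional by properness, so the same holds for every object of the thick closure. For the converse, I would invoke smoothness through a bimodule resolution of the diagonal. Write $N=N\otimes_A^L A$; since $A\in\Perf(A^{\op}\otimes A)$, the diagonal bimodule is a retract of a finite iterated extension of shifts of the free rank-one bimodule $A^{\op}\otimes A$. Applying $N\otimes_A^L(-)$ expresses $N$, in $D(A)$, as a retract of a finite iterated extension of shifts of $N\otimes_A^L(A^{\op}\otimes A)$. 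A direct unwinding (choosing h-projective models) identifies this last object, as a right $A$-module, with the free $A$-module whose generators are indexed by a basis of the vector space $A^{\op}$ tensored with $N$; using properness to replace $A^{\op}$ by a finite-dimensional complex up to quasi-isomorphism and the hypothesis that $H(N)$ is finite-dimensional, one obtains a finite direct sum of shifts of the free module $A$, hence a perfect module. Therefore $N\in\Perf A$.

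Parts (c)--(e) follow at once. For (c): by (a) the algebra $A^{\op}\otimes B$ is smooth and proper, so (b) characterizes its perfect modules by finite-dimensional cohomology. Given $N\in\Perf A$ and $L\in\Perf(A^{\op}\otimes B)$, the spaces $H(N)$, $H(L)$, $H(A)$ are all finite-dimensional, so a convergent spectral sequence computing $H(\Phi_L(N))=H(N\otimes_A^L L)$ from $H(N)$, $H(A)$, $H(L)$ shows it is finite-dimensional; applying (b) to $B$ gives $\Phi_L(N)\in\Perf B$. For (d): $HH_\bullet(A)=H(A\otimes^L_{A\otimes A^{\op}}A)=H(\Phi_\Delta(A))$, where $\Delta=A$ is perfect as a bimodule by smoothness; applying (c) with source algebra $A\otimes A^{\op}$ (smooth and proper by (a)) and target algebra $k$ yields $\Phi_\Delta(A)\in\Perf k$, i.e., $HH(A)$ is finite-dimensional. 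Part (e) is identical: for any $M\in\Perf(A^{\op}\otimes A)$ the same application of (c) gives $\Phi_\Delta(M)\in\Perf k$, hence finite-dimensional Hochschild homology.

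The main obstacle is the converse direction in (b). Making precise the chain ``$A$ perfect as a bimodule $\Rightarrow$ $N\otimes_A^L A$ is a retract of a finite iterated extension of shifts of $N\otimes_A^L(A^{\op}\otimes A)$ $\Rightarrow$ a perfect right $A$-module'' requires careful h-projective resolutions and close bookkeeping of the bimodule action on $A^{\op}\otimes A$. Both hypotheses are used essentially: smoothness bounds the length of the extension, while properness ensures the resulting direct sums become finite after passing to cohomology.
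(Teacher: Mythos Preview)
Your strategy for (b)---use smoothness to present the diagonal as a retract of an iterated cone on the free bimodule, then apply $N\otimes_A^{\bL}(-)$---is exactly the paper's. One slip: tensoring over $A$ with the free bimodule absorbs the first factor, so $N\otimes_A^{\bL}(A^{\op}\otimes A)\simeq N\otimes_k A\simeq H(N)\otimes_k A$ as a right $A$-module; there is no residual copy of $A^{\op}$, and no appeal to properness is needed at this point (properness enters only in the forward direction of (b)).

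The spectral sequence step in (c) is a genuine gap. Merely knowing that $H(N)$, $H(A)$, $H(L)$ are finite-dimensional does not by itself force $H(N\otimes_A^{\bL}L)$ to be finite-dimensional: the $E_2$-page of the Eilenberg--Moore spectral sequence is $\Tor^{H(A)}_\bullet(H(N),H(L))$, and for a general finite-dimensional algebra this total Tor can be infinite-dimensional (e.g.\ $H(A)=k[x]/(x^2)$ with $H(N)=H(L)=k$); you would need an extra argument linking smoothness of $A$ to some homological finiteness of $H(A)$, and none is given. The fix---which is what the paper's terse ``follows from (b)'' encodes---is to use perfectness directly rather than pass through cohomology: since $N$ lies in the thick closure of $A$ in $D(A)$, the object $\Phi_L(N)$ lies in the thick closure of $\Phi_L(A)=L$, viewed as a right $B$-module; as $H(L)$ is finite-dimensional, (b) applied to $B$ gives $L\vert_B\in\Perf B$, and hence $\Phi_L(N)\in\Perf B$. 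Your deductions of (d) and (e) from (c) then go through.
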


\begin{proof} (a) See for example \cite{Lu}.

(b) Since $A$ is proper it is clear that a perfect DG $A$-module has
finite dimensional cohomology.

Vice versa, assume that $N$ has finite dimensional cohomology. Since
the DG algebra $A$ is smooth, the DG $A^{\op}\otimes A$-module $A$
is a homotopy direct summand of a DG $A^{\op}\otimes A$-module $P$
which is obtained from $A^{\op}\otimes A$ by an iterated cone
construction. Then $N\simeq N\stackrel{\bL}{\otimes }_AA$ is a
homotopy direct summand of $N\otimes _AP,$ which is obtained from
the DG $A$-module $N\otimes _kA\simeq H(N)\otimes _kA$ by an
iterated cone construction.
Since $H(N)$ has finite dimension it follows that $N\in \Perf A.$

(c) This follows from (b).

(d) Since $HH_\bullet
(A)=H^{-\bullet}(A\stackrel{\bL}{\otimes}_{A^{\op}\otimes A}A)$ the
statement is clear.

(e) This follows because
$HH_\bullet(M)=H^{-\bullet}(M\stackrel{\bL}{\otimes}_{A^{\op}\otimes
A}A).$
\end{proof}

\section{Main theorems}

\subsection{Formulation of main theorems}

\begin{theo} \label{nondegenerate} (\cite{Shk},Thm.6.2)
Let $A$ be a smooth and proper DG algebra. Then the
pairing $\langle ,\rangle _A$ is nondegenerate.
\end{theo}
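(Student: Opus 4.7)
My plan is to mirror the geometric argument used in Corollary \ref{nondeg-pairing}. The crux is to establish a DG analogue of Lemma \ref{main-geometric}: for any $M\in\Perf(A^{\op}\otimes A)$, the induced map $HH(\Phi_M):HH(A)\to HH(A)$ is given by convolution with the class $Eu'(M)\in HH(A^{\op})\otimes HH(A)$ (the image of $Eu(M)\in HH_0(A^{\op}\otimes A)$ under the inverse of the Kunneth quasi-isomorphism $K$). Once this is in hand, I apply it to the diagonal bimodule $M=\Delta$, for which $\Phi_\Delta=-\otimes_A A\simeq\id$ on $\Perf A$, and conclude that the identity operator on $HH(A)$ factors through the pairing with $Eu'(\Delta)$, which forces nondegeneracy.

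To prove the DG analogue of Lemma \ref{main-geometric}, I first factor $\Phi_M$ as a composition
\begin{equation*}
\Perf(A\otimes k)\xrightarrow{\id\boxtimes\Phi_M}\Perf(A\otimes A^{\op}\otimes A)\xrightarrow{\Phi_\Delta\boxtimes\id}\Perf(k\otimes A)=\Perf A,
\end{equation*}
where $\Phi_\Delta$ is applied to the first two tensor factors. This is essentially the associativity $N\otimes_A M=(N\boxtimes M)\otimes_{A\otimes A^{\op}}A$. Applying $C_\bullet$ and using the four lemmas of the Kunneth subsection (naturality of $K$ with respect to DG functors, associativity of $K$, its compatibility with $\boxtimes$ of functors, and the behavior under the opposite), together with the naturality of $Eu$ ($HH(F)(Eu(N))=Eu(F(N))$), the induced map on $HH$ factors as
\begin{equation*}
HH(A)\xrightarrow{\id\otimes Eu'(M)}HH(A)\otimes HH(A^{\op})\otimes HH(A)\xrightarrow{\langle,\rangle_A\otimes\id}HH(A).
\end{equation*}
The verification is a diagram chase that unpacks $Eu(M)=HH(\Phi_M)(1)$ through the Kunneth isomorphism and the pairing definition; the signs work out by the universal sign rule together with Lemma \ref{compare-pairings}.

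With the analogue of Lemma \ref{main-geometric} established, I specialize to $M=\Delta$. Since $\Phi_\Delta=\id_{\Perf A}$, Proposition \ref{invariance-HH} gives $HH(\Phi_\Delta)=\id_{HH(A)}$. Writing $Eu'(\Delta)=\sum_i\alpha_i\otimes\beta_i$ with $\alpha_i\in HH(A^{\op})$, $\beta_i\in HH(A)$, the key lemma yields $x=\sum_i\langle x,\alpha_i\rangle_A\cdot\beta_i$ for all $x\in HH(A)$. Hence if $x\in HH(A)$ is in the left kernel of the pairing, all coefficients vanish and $x=0$. A symmetric argument with the roles of $A$ and $A^{\op}$ swapped (using Lemma \ref{compare-pairings}) shows the right kernel vanishes as well. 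Finite-dimensionality of $HH(A)$ and $HH(A^{\op})$, guaranteed by Section \ref{smooth-proper-algebras}(a),(d), then upgrades injectivity of $HH(A)\to HH(A^{\op})^*$ to bijectivity.

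The main obstacle is purely bookkeeping: verifying the factorization and signs in the analogue of Lemma \ref{main-geometric}. Smoothness of $A$ is used exactly to guarantee that $\Delta\in\Perf(A^{\op}\otimes A)$, so that $\Phi_\Delta$ and $Eu(\Delta)$ are defined, while properness ensures the Hochschild spaces are finite-dimensional; both hypotheses are thus essential for the concluding dimension count.
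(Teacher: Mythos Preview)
Your proposal is correct and follows essentially the same route as the paper. The key lemma you isolate is precisely Lemma \ref{main-lemma} (from \cite{Shk}), proved via the same factorization $\Phi_M\simeq \Phi_{\Delta\otimes_k A}\circ(-\otimes_k M)$, and your specialization to $M=\Delta$ together with the finite-dimensionality of $HH(A)\simeq HH(A^{\op})$ is exactly the paper's argument; the only cosmetic difference is that the paper concludes nondegeneracy directly from injectivity plus equal finite dimensions rather than invoking the symmetric argument via Lemma \ref{compare-pairings}.
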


\begin{theo} \label{Lef} ({\rm LFP})
Let $A$ be a smooth and proper DG algebra and
$M\in \Perf
(A^{\op}\otimes A).$ Consider the functor $\Phi _M=-\stackrel
{\bL}{\otimes} _AM:
\Perf A\to \Perf A$ and the corresponding linear endomorphisms
$HH_j(\Phi _M):HH_j(A)\to HH_j(A).$ Then there is an equality of the
two elements of $k$
$$\sum _i(-1)^i\dim HH_i(M)=\sum _j(-1)^j\Tr HH_j(\Phi _M))$$
\end{theo}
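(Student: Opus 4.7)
The plan is to adapt the strategy of Theorem \ref{LFP-Hochschild-homology-geometric-in-text} to the DG setting. The geometric proof rested on three structural ingredients --- the Kunneth isomorphism, the Euler class, and a non-degenerate pairing on $HH$ --- all of which have been developed in the preceding subsections for smooth and proper DG algebras, with non-degeneracy supplied by Theorem \ref{nondegenerate}. So the argument should proceed in complete parallel to the geometric one.

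First I would establish the DG analog of Lemma \ref{main-geometric}: that $HH(\Phi_M): HH(A) \to HH(A)$ is given by convolution with the "externalized" Euler class $Eu'(M) := K^{-1}(Eu(M)) \in HH(A^{\op}) \otimes HH(A)$ against the pairing $\langle,\rangle_A$. To prove this, one factors $\Phi_M: \Perf A \to \Perf A$ through a three-step composition analogous to the geometric decomposition in the proof of Lemma \ref{main-geometric}: externally tensor with $M$, then contract the middle $A^{\op} \otimes A$ against the diagonal bimodule $\Delta$. Applying $HH(-)$ and using the functoriality of $K$ with respect to bimodule functors (the Shklyarov Kunneth lemma quoted above), the definition of the Euler class as $HH(\Phi_N)(1)$, and the definition of the pairing as $HH(\Phi_\Delta) \cdot K$, assembles the formula.

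Next I would establish the DG analog of Lemma \ref{computation}: if $Eu'(M) = \sum_s a_s \otimes b_s$ with $a_s \in HH(A^{\op})$ and $b_s \in HH(A)$, then $\sum_i(-1)^i \dim HH_i(M)$ equals the pairing of the two sides. This follows because the left-hand side is the Euler characteristic of the complex $\Phi_\Delta(M)$; by Corollary \ref{Eu-k-mod} this characteristic equals $Eu(\Phi_\Delta(M)) \in k$; and by the naturality of $Eu$ with respect to DG functors, this equals $HH(\Phi_\Delta) \cdot K(Eu'(M))$, which is precisely the pairing applied to $Eu'(M)$. With both lemmas in place the proof concludes by the same bilinear-algebra calculation as in Theorem \ref{LFP-Hochschild-homology-geometric-in-text}: choose a homogeneous basis $\{v_m\}$ of $HH(A)$ and its dual basis $\{\bar v_m\} \subset HH(A^{\op})$ (which exists by non-degeneracy and finite-dimensionality), expand $Eu'(M) = \sum \alpha_{mn}\,\bar v_m \otimes v_n$, and observe that the left-hand side becomes $\sum_m \alpha_{mm}$ while the trace of $HH(\Phi_M)$ acquires a sign $(-1)^{\deg v_m}$ on each diagonal term via Lemma \ref{compare-pairings}, so the supertrace on the right is again $\sum_m \alpha_{mm}$.

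The main obstacle is the first step: the DG analog of Lemma \ref{main-geometric} demands careful bookkeeping of tensor orderings in $\Perf(A \otimes A^{\op} \otimes A)$ (there is no intrinsic symmetry as in the geometric $X \times X \times X$ case), of the swap isomorphism $\sigma$ relating $A \otimes A^{\op}$ and $A^{\op} \otimes A$, and of the super signs produced by the Kunneth morphism when two bimodule functors are composed. However all the requisite functorial properties of $K$, $Eu$, and $\langle,\rangle$ assembled above from \cite{Shk} are tailored for precisely this purpose, so the difficulty is organizational rather than conceptual, and the theorem should follow once these pieces are correctly aligned.
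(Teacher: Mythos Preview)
Your proposal is correct and follows essentially the same route as the paper. The ``DG analog of Lemma \ref{main-geometric}'' that you propose to establish is exactly the paper's Lemma \ref{main-lemma} (stated and proved immediately before the proof of Theorem \ref{Lef}), and from there the paper carries out precisely the dual-basis calculation you describe, invoking Lemma \ref{compare-pairings} for the sign and Theorem \ref{nondegenerate} for the existence of the dual basis; your analog of Lemma \ref{computation} is not isolated as a separate lemma in the paper but is the one-line observation that $\sum_i(-1)^i\dim HH_i(M)=\sum_m\alpha_{mm}$ via Definitions \ref{def-pairing}, \ref{def-HH(M)} and Corollary \ref{Eu-k-mod}. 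One small point of care: since $M\in\Perf(A^{\op}\otimes A)$, the relevant diagonal functor in your second step is $\Phi_{\Delta^{\op}}$ rather than $\Phi_\Delta$, and correspondingly the pairing that directly computes $\sum_i(-1)^i\dim HH_i(M)$ from $Eu'(M)$ is $\langle\,,\,\rangle_{A^{\op}}$; this is why the dual basis in the paper is normalized by $\langle\bar v_m,v_n\rangle_{A^{\op}}=\delta_{mn}$ and Lemma \ref{compare-pairings} is then needed to pass to $\langle\,,\,\rangle_A$ when evaluating $HH(\Phi_M)(v_l)$.
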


\begin{theo} \label{HRR} ({\rm HRR} \cite{Shk},Thm.3.5)
Let $A$ be a proper DG algebra.
 For any $N\in
\Perf A,$ $M\in \Perf A^{\op}$
$$\sum _i\dim H^i(N\stackrel{\bL}{\otimes } _AM)=
\langle Eu(N),Eu(M)\rangle.$$
\end{theo}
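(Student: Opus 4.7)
The plan is to mimic the proof of Proposition \ref{HRR-HH-geometric} in the DG setting, replacing the geometric diagram $\pt \to X \times X \to X \to \pt$ by its algebraic analogue $k \to A \otimes A^{\op} \to k$ where the middle arrow is the ``diagonal'' DG bimodule $\Delta = A$.

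First I would introduce the DG functor $\Phi_{N \otimes_k M} : \Perf k \to \Perf(A \otimes A^{\op})$, which sends $1 \mapsto N \otimes_k M$, and observe that
$$\Phi_{\Delta} \circ \Phi_{N \otimes_k M} : \Perf k \lto \Perf k, \qquad k \longmapsto (N \otimes_k M) \stackrel{\bL}{\otimes}_{A \otimes A^{\op}} A \;\simeq\; N \stackrel{\bL}{\otimes}_A M.$$
By functoriality of the Euler class (the DG counterpart of Property 7) together with Corollary \ref{Eu-k-mod}, the Euler class of this object of $\Perf k$ computes the left-hand side:
$$\sum_i (-1)^i \dim H^i\bigl(N \stackrel{\bL}{\otimes}_A M\bigr) \;=\; HH(\Phi_\Delta) \circ HH(\Phi_{N \otimes_k M})(1) \;\in\; HH_0(k) = k.$$

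Next I would use the K\"unneth compatibility lemma (the third Lemma in the ``Kunneth isomorphism'' subsection, applied with $A = B = k$, $C = A$, $D = A^{\op}$, $X = N$, $Y = M$) to rewrite
$$HH(\Phi_{N \otimes_k M})(1) \;=\; K\bigl(Eu(N) \otimes Eu(M)\bigr) \;\in\; HH(\Perf(A \otimes A^{\op})) = HH(A \otimes A^{\op}).$$
Substituting into the previous equality and recalling Definition \ref{def-pairing} of the pairing
$$\langle -, - \rangle_A \;=\; HH(\Phi_\Delta) \circ K : HH(A) \otimes HH(A^{\op}) \lto k,$$
we obtain exactly
$$\sum_i (-1)^i \dim H^i\bigl(N \stackrel{\bL}{\otimes}_A M\bigr) \;=\; \bigl\langle Eu(N), Eu(M) \bigr\rangle,$$
which is the theorem.

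The argument is essentially a diagram chase, so there is no single ``hard step''; the main thing to be careful about is the bookkeeping of left versus right module structures: $N$ is a right $A$-module but $M$ is a right $A^{\op}$-module, and in forming $N \otimes_k M$ as an $A \otimes A^{\op}$-module one must verify that contraction against $\Delta$ (the $A$-$A$-bimodule $A$, viewed as a left $A \otimes A^{\op}$-module via the sign convention in Section 6.4) does produce $N \stackrel{\bL}{\otimes}_A M$ with the correct signs. Note that the properness of $A$ is needed only to guarantee that the Hochschild homologies and the tensor product cohomologies involved are finite-dimensional, so that all Euler characteristics make sense; smoothness is not required here (unlike in Theorem \ref{Lef}).
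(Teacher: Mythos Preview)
Your argument is correct. It is the direct DG analogue of the geometric Proposition~\ref{HRR-HH-geometric}: factor the functor $\Perf k\to\Perf k$, $k\mapsto N\stackrel{\bL}{\otimes}_A M$, through $\Perf(A\otimes A^{\op})$ via $\Phi_{N\otimes_k M}$ and $\Phi_\Delta$, then invoke K\"unneth compatibility and the definition of the pairing.

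The paper organizes the proof slightly differently. Rather than unpacking the pairing directly, it first proves the key Lemma~\ref{main-lemma} (for any $X\in\Perf(A^{\op}\otimes B)$ the map $HH(\Phi_X)$ is convolution with $Eu^\prime(X)$) and then specializes to $B=k$, $X=M$: this yields $HH(\Phi_M)(Eu(N))=\langle Eu(N),Eu(M)\rangle$, while functoriality of $Eu$ gives $HH(\Phi_M)(Eu(N))=Eu(N\stackrel{\bL}{\otimes}_A M)$. Your proof is essentially the special case $B=k$ of the \emph{proof} of Lemma~\ref{main-lemma}, inlined; the paper extracts that lemma once and reuses it for Theorems~\ref{nondegenerate} and~\ref{Lef} as well. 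Both routes are equally short for HRR itself; the paper's packaging pays off only because the same lemma drives the other two theorems.
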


\begin{remark} \label{generalization-by-petit} In the recent paper \cite{Pe}
there appears a generalization of Theorem \ref{HRR} using the Euler class
of a pair $(M,f),$ where $M\in \Perf A$ and $f:M\to M$ is an endomorphism.
\end{remark}

\subsection{Proofs of main Theorems}

Everything is a consequence of the
following key lemma proved in \cite{Shk},Thm.3.4.

For $X\in \Perf (A^{\op}\otimes B)$ denote by $Eu ^\prime (X)$ the
element
$$K^{-1}(Eu(X))\in \bigoplus _nHH_{-n}(\Perf A^{\op})\otimes
HH_n(\Perf B),$$ where $K$ is the Kunneth isomorphism.

Note that if the DG algebra $A$ is proper, then the functor
$\Phi _X=-\otimes _AX$ maps $\Perf A$ to $\Perf B.$

\begin{lemma} \label{main-lemma}
Let $A,B$ be DG algebras and $X\in
\Perf(A^{\op}\otimes B).$ Assume that $A$ is proper. Then the map
$$HH(\Phi _X):HH(A)\to HH(B)$$ is the convolution with the class $Eu
^\prime (X).$ That is, if
$$Eu ^\prime (X)=\sum _nx^\prime _{-n}\otimes x_n \in
\bigoplus _nHH_{-n}(\Perf A^{\op})\otimes HH_n(\Perf B),$$ then
$HH(T_X)(y)=\sum _n\langle y,x^\prime _{-n}\rangle \cdot x_n.$
\end{lemma}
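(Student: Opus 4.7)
The plan is to parallel the proof of Lemma \ref{main-geometric} verbatim, trading the geometric operations ($\boxtimes,$ $\Delta^{*},$ $p_{*}$) for their DG-categorical counterparts (external tensor product of DG categories, and the diagonal-bimodule functor $\Phi_{\Delta}$). The starting observation is that the functor $\Phi_{X}:\Perf A\to \Perf B$ factors, up to DG-functorial equivalence, as the composition
\begin{equation*}
\Perf A\simeq \Perf A\otimes \Perf k \stackrel{\id\otimes \Phi_{X}}{\longrightarrow}\Perf A\otimes \Perf(A^{\op}\otimes B)\longrightarrow \Perf(A\otimes A^{\op}\otimes B) \stackrel{\Phi_{\Delta}\otimes \id}{\longrightarrow}\Perf B,
\end{equation*}
where $\Phi_{X}$ in the first labelled arrow denotes the functor $\Perf k\to \Perf(A^{\op}\otimes B)$ with $k\mapsto X,$ the unlabelled arrow is the canonical external-tensor DG functor underlying the Künneth map, and the last arrow tensors the first two slots of an $A\otimes A^{\op}\otimes B$-module with the diagonal bimodule $\Delta=A.$ Tracing $N\in \Perf A$ through the composition gives $N\otimes k\mapsto N\otimes X\mapsto (N\otimes X)\otimes_{A\otimes A^{\op}}A= N\otimes_{A}X=\Phi_{X}(N),$ as required; properness of $A$ is what ensures that $\Phi_{X}(N)$ lies in $\Perf B.$

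Next I would apply $HH$ to this factorization and invoke the coherence results from the previous subsection (functoriality of $HH,$ the Künneth compatibility of functors of the form $\Phi_{Y}\otimes \Phi_{Z},$ the associativity of $K$ across a triple tensor product, and the defining identity $HH(\Phi_{X})(1)=Eu(X)$). The resulting map $HH(\Phi_{X}):HH(A)\to HH(B)$ is then identified with the composition
\begin{equation*}
HH(A)\stackrel{\id\otimes Eu(X)}{\longrightarrow} HH(A)\otimes HH(A^{\op}\otimes B)\stackrel{\id\otimes K^{-1}}{\longrightarrow} HH(A)\otimes HH(A^{\op})\otimes HH(B)
\end{equation*}
\begin{equation*}
\stackrel{K\otimes \id}{\longrightarrow} HH(A\otimes A^{\op})\otimes HH(B)\stackrel{HH(\Phi_{\Delta})\otimes \id}{\longrightarrow} HH(B).
\end{equation*}
By the definition of $Eu'(X),$ the first two arrows send $y$ to $\sum_{n}y\otimes x'_{-n}\otimes x_{n};$ by Definition \ref{def-pairing}, the remaining composition $(HH(\Phi_{\Delta})\otimes \id)\circ (K\otimes \id)$ sends $y\otimes x'_{-n}\otimes x_{n}$ to $\langle y, x'_{-n}\rangle \cdot x_{n}.$ Assembling these pieces yields $HH(\Phi_{X})(y)=\sum_{n}\langle y, x'_{-n}\rangle \cdot x_{n},$ which is the claimed formula.

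The main obstacle is bookkeeping rather than conceptual: one must verify that the displayed factorization of $\Phi_{X}$ holds at the level of DG functors (up to homotopy), and then chase a sizeable commutative diagram of Hochschild chain complexes involving a triple-factor Künneth map, keeping track of all the Koszul signs. Both of these tasks are handled by the coherence lemmas imported from \cite{Shk} and recorded in the previous subsection, so the argument ultimately reduces to correctly assembling those lemmas, in complete analogy with the proof of Lemma \ref{main-geometric}.
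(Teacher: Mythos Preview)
Your proposal is correct and follows essentially the same route as the paper's proof: both factor $\Phi_{X}$ as ``first tensor with $X$ over $k$, then contract the $A\otimes A^{\op}$ slots via the diagonal bimodule,'' and then unwind this factorization on $HH$ using the K\"unneth isomorphism and the definition of the pairing. The paper presents the factorization as the two-step composite $\Perf A\xrightarrow{-\otimes_{k}X}\Perf(A\otimes A^{\op}\otimes B)\xrightarrow{\Phi_{\Delta\otimes_{k}B}}\Perf B$, which is exactly your composite after collapsing the intermediate external-tensor stage; the resulting diagram on $HH$ is the same as yours.
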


\begin{proof} Note that the DG functor $\Phi _X $ is isomorphic
to the composition of DG functors
$$\Perf A\stackrel{-\otimes _kX}{\longrightarrow} \Perf (A\otimes
A^{\op}\otimes B) \stackrel{\Phi _{\Delta \otimes _kB}}{\longrightarrow}
\Perf B$$
It follows that the corresponding map $HH(\Phi _X)$ is isomorphic to
the following composition
$$\begin{array}{ccc}
HH(\Perf A)\otimes HH(\Perf k) & & HH(\Perf B)\\
\downarrow & & \uparrow \\
HH(\Perf A) & & HH(\Perf k)\otimes HH(\Perf B) \\
HH(-\otimes _kX)\downarrow & & \uparrow HH(\Phi _{\Delta})\otimes \id \\
HH(\Perf (A\otimes A^{\op}\otimes B)) & & HH(\Perf (A\otimes A^{\op}))
\otimes HH(\Perf B)\\
K^{-1}\downarrow & & \uparrow K\otimes \id \\
HH(\Perf A)\otimes HH(A^{\op}\otimes B) & \stackrel{\id \otimes K^{-1}}
{\longrightarrow} & HH(\Perf A)\otimes HH(\Perf A^{\op}) \otimes
HH(\Perf B)
\end{array}
$$
The composition of the left vertical arrows equals
$$ HH(\Perf A)\otimes HH(\Perf k)\stackrel{\id \otimes Eu^\prime (X)}
{\longrightarrow} HH(\Perf A)\otimes HH(\Perf (A^{\op}\otimes B)).$$
This implies the lemma.
\end{proof}

\subsection{Proof of Theorem \ref{HRR}}
We apply Lemma \ref{main-lemma} with $A=A, B=k$ and $X=M.$ The
composition of DG functors
$$\Perf k\stackrel{\Phi _N}{\longrightarrow} \Perf A \stackrel{\Phi _M}
{\longrightarrow} \Perf k$$ is isomorphic to the DG functor
$\Phi _{N\otimes _AM}:\Perf k\to \Perf k.$ By Lemma \ref{main-lemma}
$$Eu (N\otimes _AM)=\langle Eu(N),Eu(M)\rangle$$
and by Corollary \ref{Eu-k-mod} $Eu(N\otimes _AM)=\sum _i(-1)^i\dim
H^i(N\otimes _AM).$

\subsection{Proof of Theorem \ref{nondegenerate}} We apply Lemma
\ref{main-lemma} with $A=B=X.$ Then the
functor $\Phi _X:\Perf A\to \Perf A$ is isomorphic to the identity.
Therefore the corresponding linear map $HH (\Phi _X):
HH (A)\to HH (A)$ is the identity. By Lemma
\ref{main-lemma} this shows that the map $HH (A)\to
HH(A^{\op})^*$ defined by the pairing $\langle ,\rangle _A$
is injective. Since the space $HH(A)$ is finite dimensional and
is isomorphic to $HH (A^{\op})$ it follows that the pairing is
nondegenerate.

\subsection{Proof of Theorem \ref{Lef}} Fix $M\in \Perf (A^{\op}\otimes A).$
As before
denote by $Eu (M)^\prime \in HH (A^{\op})\otimes  HH (A)$
the inverse
image of $Eu(M)$ under the Kunneth isomorphism
$$HH (A^{\op})\otimes HH (A)\stackrel{K}{\longrightarrow}
HH(A^{\op}\otimes A).$$

Choose a homogeneous basis $\{v_m\}$ of $HH (A)$ and let
$\{\bar{v}_m\}$ be a basis of
$HH(A^{\op})$ such that
$\langle \bar{v}_m,v_n\rangle _{A^{\op}}=\delta _{mn}$ (we use Theorem
\ref{nondegenerate}).
Let
$$Eu (M)^\prime =\sum _{m,n}\alpha _{mn}\cdot \bar{v}_m\otimes v_n$$
for $\alpha _{mn}\in k.$
Then by Definitions \ref{def-pairing}, \ref{def-HH(M)} and Corollary
\ref{Eu-k-mod}
$$\sum _i(-1)^i \dim HH_i(M)=\sum _{m}\alpha _{mm}.$$
On the other hand by Lemma \ref{main-lemma}
$$HH (\Phi _M)(v_l)=
\sum _{m,n}\alpha _{mn}\cdot \langle v_l,\bar{v}_m\rangle _A\cdot v_n$$
By Lemma \ref{compare-pairings}
$\langle v_l,\bar{v}_m\rangle _A=(-1)^{\deg (v_l)\deg (\bar{v}_m)}
\langle \bar{v}_m,v_l\rangle _{A^{\op}}=(-1)^{\deg (v_l)}
\langle \bar{v}_m,v_l\rangle _{A^{\op}}=(-1)^{\deg (v_l)}\delta _{lm}.$
So the trace of the linear operator
$HH (\Phi _M)$ on $HH (A)$ equals $\sum _m(-1)^{\deg (v_m)}\alpha _{mm}.$
Hence its supertrace is
$$\sum _i(-1)^i\Tr HH_i(\Phi _M)=\sum _m\alpha _{mm}$$
which proves the theorem.


\begin{thebibliography}{99}

\bibitem[BL]{BL} J.~Bernstein, V.~A.~Lunts, Equivariant sheaves and
functors, LNM 1578 (1994)

\bibitem[BO]{BO} A.~Bondal, D.~Orlov, Preprint MPI 95/15 (1995) (see also alg-geom//9506012).

\bibitem[Cal1]{Cal1} A.~Caldararu, The Mukai pairing, I: The Hochschild structure,
arXiv:math/0308079v2.

\bibitem[Cal2]{Cal2} A.~Caldararu, The Mukai pairing, II:
The Hochschild-Kostant-Rosenberg
isomorphism,  Adv. Math.  194  (2005),  no. 1, 34–66, arXiv:math/0308080v3.

\bibitem[Dr]{Dr} V.~Drinfeld, DG quotients of DG categories,
J. Algebra  272  (2004),  no. 2, 643–691.

\bibitem[Ke1]{Ke1} B.~Keller, Deriving DG categories,
Ann. Sci. École Norm. Sup. (4)  27  (1994),  no. 1, 63-102.

\bibitem[Ke2]{Ke2} B.~Keller, On the cyclic homology of exact
categories, J. Pure Appl. Algebra 136 (1999), no. 1, 1-56.

\bibitem[Ke3]{Ke3} B.~Keller, On differential graded categories,
International Congress of Mathematicians. Vol. II, 151-190, Eur.
Math. Soc., Zurich, 2006.

\bibitem[Lu]{Lu} V.~A.~Lunts, Categorical resolutions of
singularities, Categorical resolution of singularities,
J. Algebra  323  (2010),  no. 10, 2977–3003.

\bibitem[MaSte]{MaSte} E.~Macri, P.~Stellari,
Infinitesimal derived Torelli theorem for
K3 surfaces. With an appendix by Sukhendu Mehrotra.
Int. Math. Res. Not. IMRN  2009,  no. 17, 3190–3220, arXiv:0804.2552v2.

\bibitem[Mu]{Mu} S.~Mukai, Duality between $D(X)$ and $D(\hat{X})$
with its application to Picard sheaves, Nagoya Math. J. 81 (1981),
pp.153–175.

\bibitem[Mus]{Mus} M.~Mustata, Lecture notes available at
http://www.math.lsa.umich.edu/~mmustata/lecture5.pdf.

\bibitem[Pe]{Pe} F.~Petit, A Riemann-Roch theorem for DG algebras, arXiv:1004.0361.

\bibitem[Ram]{Ram} A.~C.~Ramadoss, The relative Riemann-Roch theorem from
Hochschild homology, New York J. Math. 14 (2008) 643-717.

\bibitem[Ram2]{Ram2} A.~C.~Ramadoss, The Mukai pairing and integral transforms in
Hochschild homology, Moscow Math. Journal, Vol 10, No 3, (2010), 629-645.

\bibitem[Shk]{Shk} D.~Shklyarov, Hirzebruch-Riemann-Roch theorem for
DG algebras, arXiv:0710.1937v3.

\end{thebibliography}
\end{document}